\documentclass[3p]{elsarticle}
\biboptions{sort&compress}
\usepackage{url}
\usepackage{graphicx}
\usepackage{amsmath}
\usepackage{amsthm}
\usepackage{algorithm}
\usepackage{algorithmicx}
\usepackage{algpseudocode}
\usepackage{xcolor}
\usepackage{mathtools}
\usepackage{cleveref}
\usepackage[normalem]{ulem}

\usepackage[bbgreekl]{mathbbol}

\newcommand{\DIV}[1]{\nabla_{\mathbf{X}} \cdot #1}

\newcommand{\etal}{\emph{et al}.}

\newcommand{\FF}{\mathbb{F}}

\newcommand{\PP}{\mathbb{P}}

\newcommand{\FFb}{\overline{\FF}}

\newcommand{\ub}{\boldsymbol{u}}

\newcommand{\Chib}{\boldsymbol{\chi}}
\newcommand{\cb}{\boldsymbol{\chi}}
\newcommand{\xb}{\boldsymbol{x}}
\newcommand{\Xb}{\boldsymbol{X}}
\newcommand{\fb}{\boldsymbol{f}}
\newcommand{\Fb}{\boldsymbol{F}}
\newcommand{\Tb}{\boldsymbol{T}}
\newcommand{\Ub}{\boldsymbol{U}}
\newcommand{\UbVecIB}{\vec{\Ub}\vphantom{\Ub}^{\text{IB}}}

\newcommand{\Lb}{\boldsymbol{L}}
\newcommand{\LbVecIB}{\vec{\Lb}\vphantom{\Lb}^{\text{IB}}}
\newcommand{\Nb}{\boldsymbol{N}}
\newcommand{\nb}{\boldsymbol{n}}
\newcommand{\hb}{\boldsymbol{h}}

\newcommand{\Ab}{\boldsymbol{A}}
\newcommand{\phib}{\boldsymbol{\phi}}
\newcommand{\psib}{\boldsymbol{\psi}}

\newcommand{\cauchy}{\bbsigma}
\newcommand{\cauchyf}{\bbsigma^{\text{f}}}
\newcommand{\cauchyv}{\bbsigma^{\text{v}}}
\newcommand{\cauchys}{\bbsigma^{\text{e}}}

\newcommand{\ztensor}{\mathbb{0}}
\newcommand{\pstab}{\pi_{\text{stab}}}

\newcommand{\rhos}{\rho^{\text{s}}}
\newcommand{\rhof}{\rho^{\text{f}}}

\newcommand{\PPs}{\PP^{\text{e}}}

\newcommand{\soliddom}{{\Omega^{\text{s}}_t}}
\newcommand{\soliddomO}{{\Omega^{\text{s}}_0}}
\newcommand{\fluiddom}{\Omega^{\text{f}}_t}
\newcommand{\fluiddomO}{\Omega^{\text{f}}_0}

\newcommand{\efac}{E_\text{FAC}}
\newcommand{\mfac}{M_\text{FAC}}

\newcommand{\kappas}{\kappa_\text{stab}}

\newcommand{\supp}{\text{supp}}

\newcommand{\Pone}{\mathcal{P}^1}
\newcommand{\Ptwo}{\mathcal{P}^2}
\newcommand{\Qone}{\mathcal{Q}^1}
\newcommand{\Qtwo}{\mathcal{Q}^2}

\newcommand{\tria}{{\ensuremath{\mathcal{T}^h}}}
\newcommand{\euleriandx}{{\ensuremath{\Delta x}}}
\newcommand{\lagrangiandx}{{\ensuremath{\Delta X}}}
\newcommand{\half}{{\ensuremath{\frac{1}{2}}}}

\newcommand{\dAb}{\, \mathrm{d}\Ab}
\newcommand{\dXb}{\, \mathrm{d}\Xb}

\newcommand{\dxb}{\, \mathrm{d}\xb}


\newcommand{\xface}{{i + \half, j, k}}
\newcommand{\yface}{{i, j + \half, k}}
\newcommand{\zface}{{i, j, k + \half}}

\newcommand{\vecT}[1]{\ensuremath{\vec{#1}\vphantom{#1}^T}}
\newcommand{\vecOneT}{%
  \ensuremath{%
    \vec{\vphantom{\boldsymbol{1}}\hphantom{-}}
    \llap{\ensuremath{\boldsymbol{1}}}
    ^T%
  }}

\newcommand{\fespace}{\ensuremath{V^h}}
\newcommand{\fesuperspace}{\ensuremath{V}}


\newtheorem{theorem}{Theorem}
\newtheorem{remark}{Remark}
\DeclareMathOperator*{\argmin}{arg\,min}

\definecolor{vargreen}{rgb}{0.0, 0.5, 0.0}
\definecolor{varpurp}{rgb}{0.5, 0.0, 0.5}

\bibliographystyle{elsarticle-num}




\begin{document}

\begin{frontmatter}

\title{A Nodal Immersed Finite Element-Finite Difference Method}

\author[1]{David R. Wells\corref{cor1}\fnref{fn:authorship}}
\ead{drwells@email.unc.edu}

\author[2]{Ben Vadala-Roth\fnref{fn:authorship,fn:independence}}

\author[3]{Jae H. Lee\fnref{fn:affiliation}}

\author[4,5,6,7]{Boyce E.~Griffith\corref{cor1}}
\ead{boyceg@email.unc.edu}

\address[1]{Department of Mathematics, University of North Carolina, Chapel Hill, NC, USA}
\address[2]{Westborough, MA, USA}
\address[3]{Department of Mechanical Engineering and Institute for Computational Medicine, Johns Hopkins University, Baltimore, MD, USA}
\address[4]{Departments of Mathematics, Applied Physical Sciences, and Biomedical Engineering, University of North Carolina, Chapel Hill, NC, USA}
\address[5]{Carolina Center for Interdisciplinary Applied Mathematics, University of North Carolina, Chapel Hill, NC, USA}
\address[6]{Computational Medicine Program, University of North Carolina, Chapel Hill, NC, USA}
\address[7]{McAllister Heart Institute, University of North Carolina, Chapel Hill, NC, USA}

\cortext[cor1]{Corresponding authors}
\fntext[fn:authorship]{These authors made equal contributions to this manuscript.}
\fntext[fn:independence]{Independent Researcher}
\fntext[fn:affiliation]{Present address: Center for Drug Evaluation and Research, U.S. Food and Drug Administration, Silver Spring, MD, USA}

\begin{abstract}
  The immersed finite element-finite difference (IFED) method is a computational approach to modeling interactions between a fluid and an immersed structure.
  The IFED method uses a finite element (FE) method to approximate the stresses, forces, and structural deformations on a \emph{structural mesh} and a finite difference (FD) method to approximate the momentum and enforce the incompressibility of the entire fluid-structure system on a \emph{Cartesian grid}.
  The fundamental approach used by this method follows the immersed boundary framework for modeling fluid-structure interaction (FSI), in which a force spreading operator prolongs structural forces to a Cartesian grid, and a velocity interpolation operator restricts a velocity field defined on that grid back onto the structural mesh.
  With an FE structural mechanics framework, force spreading first requires that the force itself be projected onto the finite element space.
  Similarly, velocity interpolation requires projecting velocity data onto the FE basis functions.
  Consequently, evaluating either coupling operator requires solving a matrix equation at every time step.
  Mass lumping, in which the projection matrices are replaced by diagonal approximations, has the potential to accelerate this method considerably.
  This paper provides both numerical and computational analyses of the effects of this replacement for evaluating the force projection and for the IFED coupling operators.
  Constructing the coupling operators also requires determining the locations on the structure mesh where the forces and velocities are sampled.
  Here we show that sampling the forces and velocities at the nodes of the structural mesh is equivalent to using lumped mass matrices in the IFED coupling operators.
  A key theoretical result of our analysis is that if both of these approaches are used together, the IFED method permits the use of lumped mass matrices derived from nodal quadrature rules for any standard interpolatory element.
  This is different from standard FE methods, which require specialized treatments to accommodate mass lumping with higher-order shape functions. Our theoretical results are confirmed by numerical benchmarks, including standard solid mechanics tests and examination of a dynamic model of a bioprosthetic heart valve.
\end{abstract}

\begin{keyword}
  Immersed boundary method \sep fluid-structure interaction \sep finite elements \sep finite differences \sep mass lumping \sep nodal quadrature
\end{keyword}

\end{frontmatter}

\section{Introduction}
\label{sec:intro}
The immersed boundary (IB) method was introduced by Peskin to model fluid-structure interaction (FSI) in heart valves~\cite{Peskin1972,Peskin1977}.
This method describes thin elastic structures immersed in a Newtonian fluid with Lagrangian variables for the forces and resultant deformations of the structure and Eulerian variables for the momentum of the coupled fluid-structure system.
In its original implementations, the equations are discretized via finite differences, and interactions between Lagrangian and Eulerian variables are handled through discretized integral equations with regularized Dirac delta function kernels.
Specifically, the force defined on the structural mesh (where Lagrangian quantities are evaluated) is spread onto the Cartesian grid (where Eulerian quantities are evaluated), and the velocity defined on the Cartesian grid is interpolated back to the structural mesh.
The IB method has been extended to treat volumetric structures~\cite{Peskin2002} and has been used in a wide range of applications.
Griffith and Patankar~\cite{Griffith2020} discuss many applications, including swimmers, esophageal transport, and heart valve dynamics.
We focus on the mathematical formulation of Boffi \etal~\cite{Boffi2008}, which is systematically derived from the theory of large-deformation continuum mechanics.
One advantage of this formulation is that it can leverage the broad range of structural constitutive models that have been developed, including many with parameters that can be determined directly from experimental data.
When the IB method is combined with such models, it can achieve excellent agreement between physical experiments and numerical simulations; see, e.g., the work of Lee \etal~\cite{Lee2020}.

Several finite element (FE)-based extensions of the IB method have been created, including the works of Boffi \etal~\cite{Boffi2008}, Zhang \etal~\cite{Zhang2004}, Devendran and Peskin~\cite{Devendran2012}, and Griffith and Luo~\cite{Griffith2017}.
Some of these approaches~\cite{Boffi2008,Zhang2004} discretize the entire IB system of equations with finite elements, whereas the works of Devendran and Peskin~\cite{Devendran2012} and Griffith and Luo~\cite{Griffith2017} use finite differences to discretize the Eulerian equations and finite elements to discretize the Lagrangian equations.
We focus on modifying the method of Griffith and Luo~\cite{Griffith2017}, referring to this method as the immersed finite element-finite difference (IFED) method.
Aside from using both FE and finite difference methods, the IFED method is also notable for introducing the concept of \emph{interaction points}.
These points are used in the regularized delta function-based discrete coupling operators that link the Eulerian and Lagrangian representations and can be chosen to be distinct from the points used to discretize the structure, referred to as \emph{control points}.
In many FE-based IB methods, the control points are chosen as the interaction points; herein, we refer to this approach as \emph{nodal interaction}.
This is distinct from using interaction points chosen from the interior of the elements, which we term \emph{elemental interaction}.

Even in the continuous formulation of the IB equations, the Eulerian form of elastic force belongs to the class of distributions and as such is generally singular at the fluid-solid interface.
However, upon the introduction of a regularized delta function, as is common in IB methods, this discontinuity is smoothed out across the interface.
Similarly, for FE-based IB methods, the solid is often discretized with $H^1$-conforming finite elements, and the solid stress will in general be discontinuous along the sides of elements edges, even away from the fluid-solid interface.
Force transmission from the structural mesh to the Cartesian grid offers multiple possibilities: the method of Devendran and Peskin~\cite{Devendran2012} spreads, and thereby regularizes, the divergence of the structural stress; the IFED method of Griffith and Luo~\cite{Griffith2017} introduces a weak notion of force and projects it onto the set of FE basis functions, further regularizing the force, before it is spread to the Cartesian grid.
In fact, as we detail herein, these approaches are equivalent in certain practical cases.

A consistent discretization of a time-dependent problem with the finite element method contains a mass matrix multiplied by the vector of time derivatives of the approximation.
Such discretizations require, even with explicit time stepping, solving a linear system at every time step.
The primary contribution of this paper is an extension of the IFED method introduced by Griffith and Luo~\cite{Griffith2017} that only uses diagonal mass matrices, thereby avoiding the need to solve linear systems of equations arising from finite element discretizations.
The technique we utilize to avoid solving nontrivial (i.e., nondiagonal) matrix equations is \emph{mass lumping}, in which the mass matrix (i.e., the matrix corresponding to an $L^2$ projection onto the finite element space) is replaced by an appropriate diagonal approximation.
For linear finite elements, this can be accomplished by evaluating the inner products with a lower-order quadrature, which is usually chosen to be the one based on interpolation with the finite element space itself.
This approach is usually called \emph{nodal quadrature}.
FE practitioners have long used mass lumping to increase performance and, in some cases, do so without loss of accuracy; see the work of Fried and Malkus~\cite{Fried1975}.
Mass lumping usually requires that the finite element space consists of basis functions with positive mean values, which is, in general, only true for linear elements.
There is a large body of literature available on different methods to avoid this difficulty~\cite{Fried1975, Geevers_2018, cohen2001higher, guermond2013correction, hansbo1994aspects}.
For a recent summary of work on mass lumping, including some new higher-order tetrahedral elements, we refer the reader to the work of Geevers \etal~\cite{Geevers_2018}.
Other notable lumping techniques include row-summing and formulae that scale the diagonal entries to maintain elemental mass~\cite{Hinton1976, ThomasJRHughes2000}; these techniques are equivalent for certain element types.
Nodal quadrature and row sums, when applied to standard Lagrange-type finite elements, may result in non-positive entries, which are undesirable or even unusable in practice as they typically correspond to unstable modes in time evolution~\cite{cohen2001higher}.
A remarkable feature of our new nodal IFED scheme is that it does not require the use of any of these complex approaches for enabling higher-order structural elements. In particular, with the nodal IFED approach introduced herein, simple lumping approaches are effective for both low-~and high-order elements.

\begin{figure}
  \centering
  \begin{tabular}{c c}
    \includegraphics[width=0.4\linewidth]{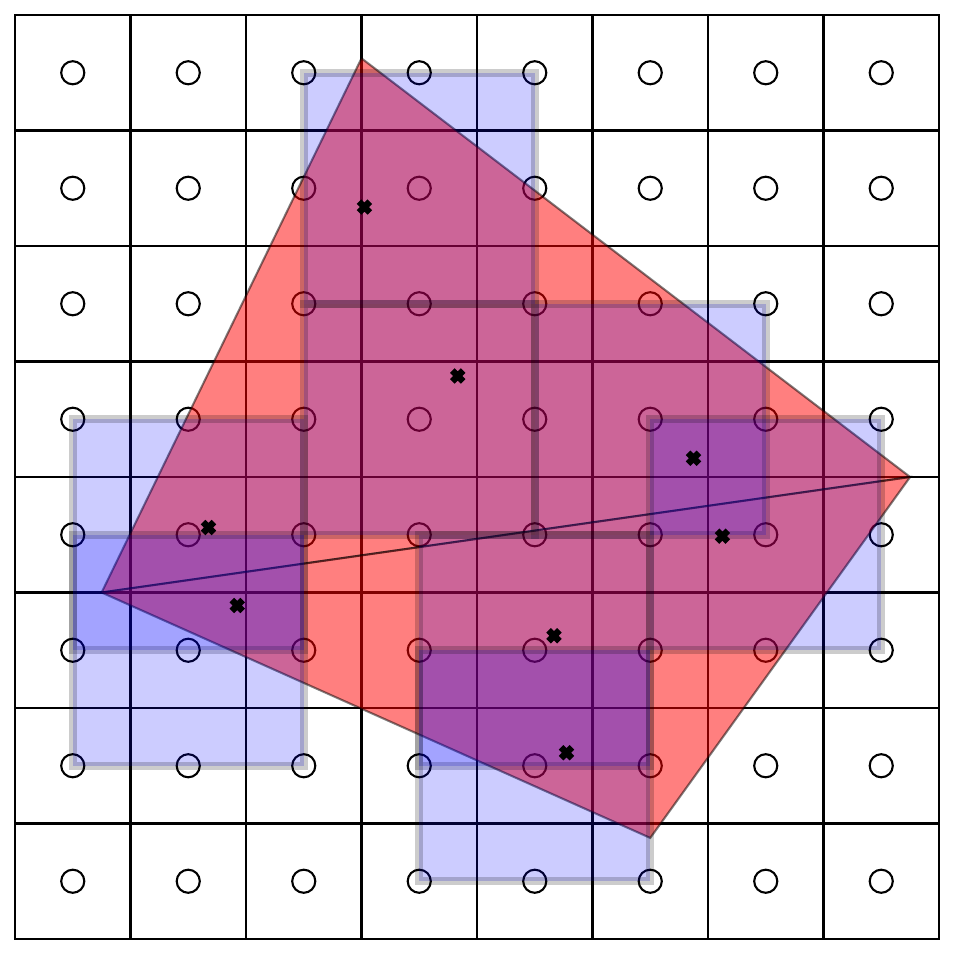} &
    \includegraphics[width=0.4\linewidth]{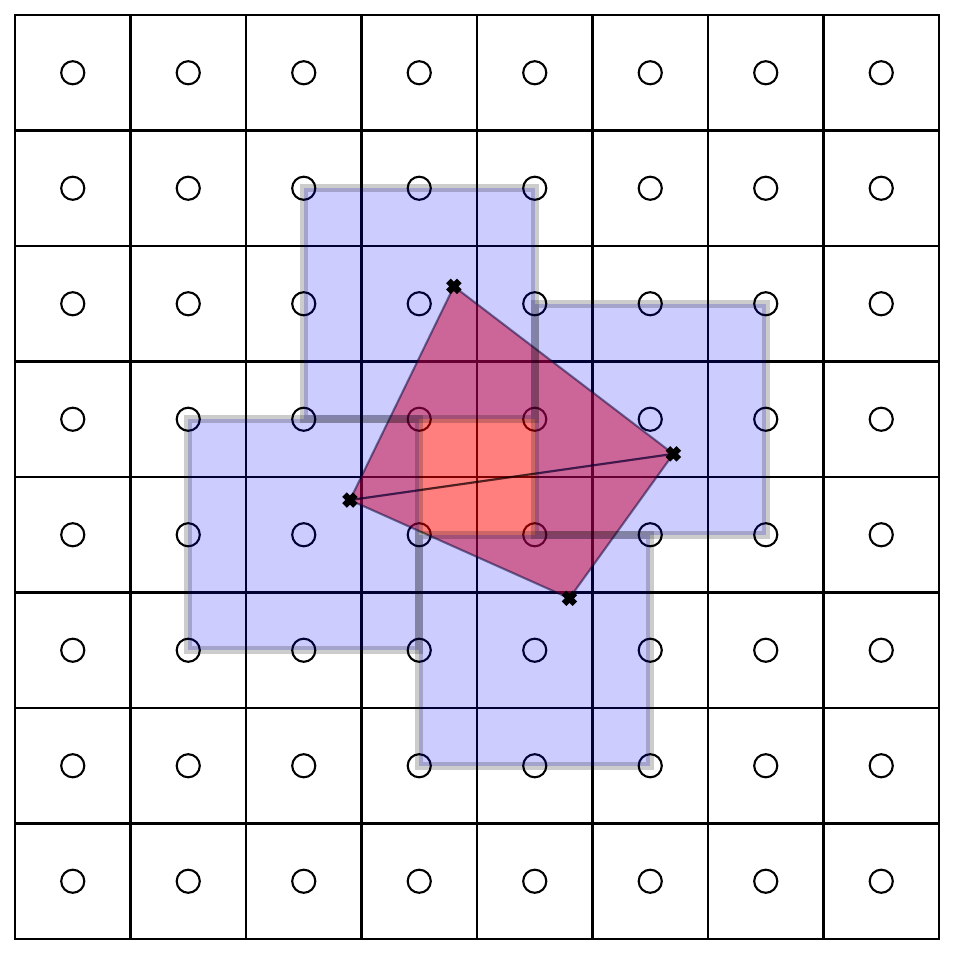}
  \end{tabular}

  \caption{Schematic of elemental (left) and nodal (right) coupling.
    Each coupling is between a Cartesian grid and a finite element mesh with two elements and uses a regularized delta function with a $3 \times 3$ stencil, depicted as a blue box around each interaction point.
    We use a Gauss quadrature rule for elemental coupling and the mesh nodes for nodal coupling, which provide eight and four interaction points, respectively.
    The triangles used for elemental coupling are larger than those that would typically be used in practice to make the visualization clear.
  }
  \label{fig:kernel-schematic}
\end{figure}

In the method of Griffith and Luo~\cite{Griffith2017}, mass matrices appear in two places: the projection of the force onto FE basis functions, which appears in the weak form of the divergence of the stress, and the projection that computes Lagrangian representations of the Eulerian velocity.
We show that using nodal interaction is equivalent to lumping the mass matrix associated with velocity interpolation.
A major theoretical result of this work, which appears in Theorem~\ref{thm:fully-nodal-ignore-weights}, is that the positive mean restriction can be avoided so long as the same diagonal mass matrices are used for all projections.
This explains why it is straightforward to use simple mass lumping techniques with the IFED formulation, even for high-order elements, and yields a very substantial simplification in the development and implementation of diagonal mass matrices.
In this study, we refer to the combination of nodal interaction, which implies a lumped mass matrix for velocity projection, and a lumped force projection matrix as \emph{nodal coupling}.
Similarly, we call the combination of consistent mass matrices and elemental interaction \emph{elemental coupling} (see Figure~\ref{fig:kernel-schematic} for the schematic of elemental (left) and nodal (right) coupling).

We demonstrate the efficacy of the IFED method with nodal coupling using a series of static and dynamic numerical benchmarks and two realistic three-dimensional FSI examples.
We begin by examining the impact of different choices of relative mesh spacings for the Lagrangian and Eulerian discretizations with the nodal IFED approach.
We also use Cook's membrane~\cite{RDCook1974}, a compressed block benchmark~\cite{Reese1999}, a torsion benchmark~\cite{Bonet2015}, and a modified Turek-Hron benchmark~\cite{Turek2007,Lee2021}.
The first three-dimensional FSI case that we consider is a FSI benchmark originally introduced to test a nonconforming arbitrary Lagrangian-Eulerian (ALE) method~\cite{Hessenthaler2017a,Hessenthaler2017b}.
The final example is a model of a bioprosthetic heart valve as introduced by Lee \etal~\cite{Lee2020, LeeJTCVS}.
All benchmarks confirm that we get comparable results with elemental and nodal coupling as long as the structural mesh discretization is not too coarse with respect to the background Cartesian grid and, further, that high quality results are obtained at practical Eulerian and Lagrangian grid spacings.


\section{IFED Formulation}
\subsection{Continuous Equations of Motion}
\label{subsec:continuous-eom}
We specify the equations of motion for the case that the coupled fluid-structure system occupies a fixed computational domain $\Omega = \fluiddom \cup \soliddom \subset \mathbb{R}^3$, in which $\fluiddom$ and $\soliddom$ are the subregions occupied by the fluid and the structure, respectively, at time $t$.
The IFED method uses both Eulerian descriptions of motion, which use fixed physical coordinates $\xb \in \Omega$, and Lagrangian descriptions of motion, which use reference coordinates $\Xb \in \soliddomO$.
The deformation mapping $\Chib (\Xb,t): (\soliddomO, t) \mapsto \soliddom$ connects the reference configuration of the structure to its current configuration.
The dynamics of coupled fluid-structure system are described by
\begin{align}
    \rho \frac{D\ub}{Dt}(\xb, t) &= -\nabla p(\xb, t)
    + \mu \nabla ^2 \ub(\xb, t) + \fb(\xb, t),
    \label{eq:ns} \\
    \nabla \cdot \ub(\xb, t) &= \, 0,
    \label{eq:divfree} \\
    \fb(\xb, t) &= \int_{\soliddomO} \Fb(\Xb,t) \, \delta(\xb - \Chib (\Xb,t)) \dXb,
    \text{ and }
    \label{eq:general-structure-force} \\
    \frac{\partial \Chib}{\partial t}(\Xb, t) &= \ \Ub(\Xb,t) = \int_{\Omega}
    \ub(\xb, t) \, \delta(\xb - \Chib (\Xb,t)) \dxb,
    \label{eq:noslip}
\end{align}
in which $\ub(\xb,t)$ is the Eulerian velocity, $\Ub(\Xb,t)$ is the structure's velocity, $\rho$ is the uniform mass density of both the fluid and the structure, $\mu$ is the uniform dynamic viscosity, $\Fb(\Xb,t)$ is the Lagrangian force density, $\fb(\xb, t)$ is the Eulerian form of $\Fb$, and $\Nb(\Xb)$ is the outward unit normal along $\partial \soliddomO$, the boundary of the structure, in the reference configuration.
The physical pressure $p(\xb, t)$ is responsible for maintaining the incompressibility constraint (Equation~\eqref{eq:divfree}).
The operators $\nabla^2, \nabla\cdot\mbox{}$, and $\nabla$ are with respect to Eulerian coordinates, and $\frac{D}{Dt} = \frac{\partial}{\partial t} + \ub \cdot \nabla$ is the (Eulerian) material time derivative.
For rigid structures, $\Fb$ may be computed in a variety of ways, such as a penalty force that approximately enforces zero displacement~\cite{Lee2021}.
For flexible structures, our formulation defines Cauchy stress on the computational domain to be
\begin{equation}
    \cauchy(\xb,t) = \cauchyf(\xb, t) +
    \begin{cases}
        \ztensor        & \xb \in \fluiddom, \\
        \cauchys(\xb,t) & \xb \in \soliddom.
    \end{cases}
    \label{cauchy-def}
\end{equation}
The first Piola-Kirchhoff stress $\PPs$ is a convenient way to describe the elastic response of the structure.
For the hyperelastic constitutive models considered here, we determine $\PPs$ from a strain energy functional $\Psi(\FF)$ via
\begin{equation}
  \PPs = \frac{\partial \Psi(\FF)}{\partial \FF},
\end{equation}
in which $\FF = \frac{\partial \Chib}{\partial \Xb}$ is the deformation gradient tensor.
The first Piola-Kirchhoff stress is related to the corresponding Cauchy stress by
\begin{equation}
  \cauchys = \frac{1}{J}\PPs \FF^T,
\end{equation}
in which $J = \det(\FF)$.
To match Equation~\eqref{eq:ns}, we consider a Newtonian fluid with Cauchy stress given by
\begin{equation}
  \cauchyf = -p \mathbb{I} + \mu\left(\nabla \ub + \nabla \ub^T \right).
\end{equation}
The resulting IB form of Equation~\eqref{eq:general-structure-force} for elastic structures, as derived by Boffi \etal~\cite{Boffi2008}, is:
\begin{equation}
    \label{eq:elastic-force}
    \fb(\xb, t) = \int_{\soliddomO} \nabla_{\Xb} \cdot \PPs (\Xb,t)
    \, \delta(\xb - \Chib (\Xb,t)) \dXb
    - \int_{\partial \soliddomO} \PPs (\Xb,t) \Nb(\Xb)
    \, \delta(\xb - \Chib (\Xb,t)) \dAb.
\end{equation}
The differential operator $\nabla_{\Xb}\cdot\mbox{}$ is the divergence operator in Lagrangian coordinates.

\subsection{Weak Structural Formulation}
Addressing the material coordinate derivative in Equation~\eqref{eq:elastic-force} is a primary concern of the weak structural formulation.
Let $\fesuperspace \subseteq (H^1(\soliddomO))^3$.
Perhaps the simplest approach to address this issue is to use the chain rule to evaluate $\DIV \PPs$, as in the work of Devendran and Peskin~\cite{Devendran2012}.
Another solution (see the work by Boffi \etal~\cite{Boffi2008}) is to use a weak formulation of the structural force and to project it onto the finite element space by defining a Lagrangian structural force $\Fb(\Xb,t) \in \fesuperspace$ satisfying
\begin{equation}
    \int_{\soliddomO} \Fb(\Xb,t) \cdot \psib(\Xb) \dXb
    = \int_{\soliddomO} \left(\DIV \PPs (\Xb,t) \right)
    \cdot \psib(\Xb) \dXb
    - \int_{\partial \soliddomO} (\PPs(\Xb, t) \Nb(\Xb))
    \cdot \psib(\Xb) \dAb
    \label{eq:strong-force}
\end{equation}
for all test functions $\psib(\Xb) \in \fesuperspace$.
In practice, we integrate Equation~\eqref{eq:strong-force} by parts to move the derivative to the test function:
\begin{equation}
    \int_{\soliddomO} \Fb(\Xb,t) \cdot \psib(\Xb) \dXb =
    -\int_{\soliddomO} \PPs(\Xb,t) : \nabla_{\Xb} \psib(\Xb) \dXb.
    \label{eq:weak-force}
\end{equation}
In fact, as mentioned above, these two approaches (Equation~\eqref{eq:weak-force} and Devendran and Peskin~\cite{Devendran2012}) are exactly equivalent in particular and practically useful cases.
The primary difference between them is in the formulation, as this work begins with a finite element approximation and Devendran and Peskin's instead avoids using a weak form.

We discretize the structure $\soliddomO$ via a triangulation $\tria$ with $m$ nodes.
We define the $3 m$-dimensional vector-valued approximation space as
\begin{equation}
  \fespace \subset \fesuperspace \subseteq H^1(\tria)^3.
  \label{eq:fe-space}
\end{equation}
We assume $\fespace$ has an interpolation operator $\Pi_{\fespace}$ corresponding to evaluation of the interpolated function at the nodes of $\tria$, i.e.,
\begin{equation}
  \Pi_{\fespace} f(\Xb) = \sum_{\ell = 1}^{3 m} f(\Xb_\ell) \circ \phib_\ell(\Xb),
  \label{eq:nodal-fe-basis}
\end{equation}
in which $\circ$ is the componentwise (i.e., Hadamard) product.
Hence $\{\phib_\ell\}$ is the standard primitive (i.e., nonzero in exactly one component) nodally interpolating finite element basis of $\fespace$.
Consequently, for each node $\Xb_k$ of $\tria$ there are exactly three basis functions that are nonzero at that node.
For example, $k$, $m + k$, and $2 m + k$ are the unique indices such that
\begin{equation}
    \label{eq:nodal-fe-assumption}
    \phib_{k}(\Xb_k) = (1, 0, 0), \phib_{m + k}(\Xb_k) = (0, 1, 0), \text{ and } \phib_{2 m + k}(\Xb_k) = (0, 0, 1),
\end{equation}
and otherwise $\phib_{i}(\Xb_k) = (0, 0, 0)$ for $i \neq k$, $i \neq m + k$, and $i \neq 2 m + k$.

The deformation $\Chib$, velocity $\Ub$, and force $\Fb$ are all approximated in $\fespace$ and can be written as
\begin{align}
  \label{eq:fem-mapping-basis}
  \Chib_h(\Xb,t) &= \sum_{\ell=1}^{3 m} \chi_{\ell}(t)\phib_{\ell}(\Xb), \\
  \label{eq:fem-velocity-basis}
  \Ub_h(\Xb,t) &= \sum_{\ell=1}^{3 m} U_{\ell}(t)\phib_{\ell}(\Xb), \text{ and} \\
  \label{eq:fem-force-basis}
  \Fb_h(\Xb,t) &= \sum_{\ell=1}^{3 m} F_{\ell}(t)\phib_{\ell}(\Xb).
\end{align}
Because each basis function is nonzero in only one component, for convenience we also define $\Ub_h^n$ and $\Fb_h^n$ for $1 \leq n \leq 3$ as the $n$th components of each finite element field, whose basis functions are indexed in the same order as the mesh nodes (e.g., $U_k^1$ is the $x$-component of the finite element velocity at mesh node $\Xb_k$).
As in Equations~\eqref{eq:fem-mapping-basis}--\eqref{eq:fem-force-basis}, we omit the subscript $h$ when indexing the weights associated with individual basis functions.
We also define a discrete deformation gradient tensor and corresponding discrete first Piola-Kirchhoff stress as
\begin{align}
  \FF_h(\Xb, t) &= \dfrac{\partial \Chib_h}{\partial \Xb}, \text{ and } \\
  \PP_h^\text{e}(\Xb, t) &= \dfrac{\partial \Psi(\FF)}{\partial \FF} \bigg|_{\FF = \FF_h}.
\end{align}
Because $\Chib_h$ is based on nodal finite elements, $\FF_h$ is generally discontinuous at interelement boundaries, and $\DIV \PP$ is only well-defined in the element interiors.
We notate our finite element space as $\Pone$ for linear triangles or tetrahedra, $\Ptwo$ for quadratic triangles or tetrahedra, $\Qone$ for bilinear quadrilaterals, and $\Qtwo$ for biquadratic quadrilaterals.

\subsection{Discretization of the Navier-Stokes Equations}
\label{subsec:discretization-of-the-momentum}
For the remainder of the statement of the discretization, the Eulerian variables are discretized with the marker-and-cell staggered-grid scheme~\cite{harlow1965numerical}, using uniform cells of length $\euleriandx$ in each coordinate direction.
There are several prominent advantages of this staggered scheme, such as its mass conservation properties, efficiency of linear algebra, and inf-sup stability.
In this approximation scheme, $\fb$, $\xb$, and $\ub$ are defined on this staggered-grid such that the $d$th component of each variable is approximated at the midpoint of the cell faces that are perpendicular to the $d$th coordinate axis.
For example, in three spatial dimensions, consider a cell at $(i, j, k)$ in index space.
The cell's barycenter is located at $(\euleriandx (i + \half), \euleriandx (j + \half), \euleriandx (k + \half))$,
its $x$ components of velocity and force are defined at $(\euleriandx i, \euleriandx (j + \half), \euleriandx (k + \half))$ and $(\euleriandx (i + 1), \euleriandx (j + \half), \euleriandx (k + \half))$,
$y$ components at $(\euleriandx (i + \half), \euleriandx j, \euleriandx (k + \half))$ and $(\euleriandx (i + \half), \euleriandx (j + 1), \euleriandx (k + \half))$, and
$z$ components at $(\euleriandx (i + \half), \euleriandx (j + \half), \euleriandx k)$ and $(\euleriandx (i + \half), \euleriandx (j + \half), \euleriandx (k + 1))$.
Without loss of generality, for the sake of simplicity we assume a three-dimensional staggered discretization for the rest of this paper (though the results are immediately applicable to two spatial dimensions).
We index these values with index space coordinates: for instance, $f^3_{\zface}$ is the value of the $z$ component of $\fb$ on the top face of Cartesian grid cell $(i, j, k)$.
Griffith and Luo provide additional information on the numerical scheme (including stabilization, handling ghost values and boundary conditions, and adaptive refinement) in Section 3.1 of their work~\cite{Griffith2017}.

\subsection{Eulerian-Lagrangian Coupling Operators}
\label{subsec:eulerian-lagrangian-coupling-operators}
This section presents a semidiscrete IB method which couples the Eulerian and Lagrangian equations of motion.
By semidiscrete, we mean that the regularized delta function $\delta_h$ has been selected but we do not yet use quadrature formulas to discretize any integrals.
The major goal of this paper is to discretize the interaction equations with nodal quadrature, so precision in the discretizations of these integrals is critical.

Forces are transferred from the structural mesh to the Cartesian grid (i.e., the right-hand side in Equation~\eqref{eq:elastic-force}) by \emph{spreading}.
Following the discretization described in Subsection~\ref{subsec:discretization-of-the-momentum} and a projection onto the finite element space (such as Equation~\eqref{eq:weak-force}), the forces are
\begin{subequations}
\begin{align}
    \label{eq:semidiscrete-spread-1}
    f^1_{\xface} &= \int_\soliddom F^1(\Xb)
    \, \delta_h(\xb_{\xface} - \Chib_h(\Xb,t)) \dXb,
    \\
    f^2_{\yface} &= \int_\soliddom  F^2(\Xb)
    \, \delta_h(\xb_{\yface} - \Chib_h(\Xb,t)) \dXb,
    \\
    \label{eq:semidiscrete-spread-3}
    f^3_{\zface} &= \int_\soliddom  F^3(\Xb)
    \, \delta_h(\xb_{\zface} - \Chib_h(\Xb,t)) \dXb.
\end{align}
\end{subequations}
Velocities are transferred from the Cartesian grid to the structural mesh by \emph{interpolation} to an intermediate Lagrangian velocity $\Ub^{\text{IB}}$:
\begin{subequations}
\begin{align}
    \label{eq:semidiscrete-interp-1}
    U^{\text{IB},1}(\Xb,t) &= \sum_{i,j,k} u^1_{\xface}
    \, \delta_h(\xb_{\xface} - \Chib_h(\Xb,t)) \euleriandx^3,
    \\
    U^{\text{IB},2}(\Xb,t) &= \sum_{i,j,k} u^2_{\yface}
    \, \delta_h(\xb_{\yface} - \Chib_h(\Xb,t)) \euleriandx^3,
    \\
    \label{eq:semidiscrete-interp-3}
    U^{\text{IB},3}(\Xb,t) &= \sum_{i,j,k} u^3_{\zface}
    \, \delta_h(\xb_{\zface} - \Chib_h(\Xb,t)) \euleriandx^3.
\end{align}
\end{subequations}
Note that $\Ub^{\text{IB}}$ is defined for all points in the structural mesh (in particular, at the quadrature points) but is typically not in $\fespace$.
This is the semidiscretization of Equation~\eqref{eq:noslip}.
The remaining integral equation arises from requiring that
\begin{equation}
    \int_{\soliddomO} \Ub_h(\Xb,t) \cdot \phib(\Xb) \,\dXb = \int_{\soliddomO}
    \Ub^{\text{IB}}(\Xb,t) \cdot \phib(\Xb)\,\dXb,
    \label{eq:semidiscrete-velocity-projection}
\end{equation}
for all test functions $\phib(\Xb) \in \fespace$, i.e., $\Ub_h$ is the projection of $\Ub^{\text{IB}}$ onto the finite element space.
Further, note that if we discretize the integrals in Equations~\eqref{eq:semidiscrete-spread-1}--\eqref{eq:semidiscrete-spread-3} and Equation~\eqref{eq:semidiscrete-velocity-projection} with the same quadrature formula then the spreading and interpolation operators are discretely adjoint.
For a thorough discussion on the adjointness of these two operators, see Griffith and Luo~\cite{Griffith2017}.

\section{Approximating the Stress Projection and Coupling}
\label{sec:approximating-the-stress-projection-and-IB-coupling}
As described by Griffith and Luo~\cite{Griffith2017}, force spreading in the IFED method first evaluates the Lagrangian force density at the interaction points using the FE basis functions, and then spreads these point forces to the Cartesian grid using a regularized delta function.
Similarly, interpolation first evaluates the Cartesian grid velocity at the same interaction points using the same regularized delta function, and then uses those sampled velocities as data in solving an $L^2$ projection equation to determine the velocity of the structure.
Many versions of the IB method and its extensions used the structural nodes as interaction points~\cite{Wang2012,Zhang2004,Zhang2007}, but Griffith and Luo introduced the possibility of determining the interaction points through quadrature rules.

In the subsections below, we consider four types of quadratures: a \emph{consistent} quadrature $\mathbb{C}_q = \{(\Xb_q, w_q)\}$, in which
\begin{equation}
  \label{eq:consistent-quadrature}
    \sum_{(\Xb_q, w_q) \in \mathbb{C}_q} \phib_i(\Xb_q) \cdot \phib_j(\Xb_q) w_q = \int_\soliddom \phib_i(\Xb) \cdot \phib_j(\Xb) \dXb,
    \forall \phib_i, \phib_j \in \fespace;
\end{equation}
a \emph{higher-order} quadrature $\mathbb{H}_q = \{(\Xb_q, w_q)\}$, in which
\begin{equation}
  \label{eq:higher-order-quadrature}
    \sum_{(\Xb_q, w_q) \in \mathbb{H}_q}
    -\PPs(\Xb_q, t) :
    \nabla_{\Xb} \phib(\Xb_q) w_q \approx
    -\int_\soliddom \PPs(\Xb, t) :
    \nabla_{\Xb} \phib(\Xb) \dXb,\, \forall \phib_j \in \fespace
\end{equation}
is at least as accurate as interpolation with the finite element space (i.e., the quadrature error here does not dominate the overall error in the scheme);
an \emph{adaptive} quadrature $\mathbb{A}_q = \{(\Xb_q, w_q)\}$ of at least the same approximation order as $\mathbb{C}_q$, in which
\begin{equation}
  \label{eq:adaptive-quadrature}
    \forall \Xb \in \soliddomO,
    \text{ } \min_{q} \|\Chib_h(\Xb, t) - \Chib_h(\Xb_q, t)\| \leq C_{\mathbb{A}} \euleriandx;
\end{equation}
and a \emph{nodal} quadrature $\mathbb{N}_q = \{(\Xb_q, w_q)\}_{q=1}^{m}$, in which
\begin{equation}
  \label{eq:nodal-quadrature}
  \Xb_q \text{ is a node of } \tria \text{ and } w_q = \int \phib_q(\Xb) \cdot \boldsymbol{1} \dXb.
\end{equation}
Here $\delta_{ij}$ is the Kronecker delta, $\phib_i$, $\phib_j$, and $\phib_q$ are elements of the FE basis defined in Equation~\eqref{eq:nodal-fe-basis}, $C_{\mathbb{A}}$ is an $O(1)$ constant (e.g., Peskin~\cite{Peskin2002} uses $C_{\mathbb{A}} = \half$), and $\euleriandx$ is the Cartesian grid spacing.
In general, the adaptive quadrature varies as the structure deforms, and $C_{\mathbb{A}}$ is chosen to be small enough to avoid gaps in the Cartesian grid representation of the structural force density.
For a further discussion on the selection of adaptive quadratures, see Figure 3 and related discussion in the work by Griffith and Luo~\cite{Griffith2017}.

Notice that each of these quadratures is defined across the entire mesh, rather than being defined on a single element.
In practice, we evaluate the non-nodal quadratures in the standard way by looping over the elements of the mesh, but we may instead loop over only the mesh \emph{nodes} for the nodal quadrature.
Consequently, for any quadrature, we evaluate the integrand at each quadrature point exactly once.
We define the quadratures this way (on the entire mesh instead of on individual elements) because it allows us to manipulate the elemental and nodal quadratures in the same manner.

\subsection{Projecting the Divergence of the Elastic Stress}
This subsection examines the difference between using consistent and lumped mass matrices for projecting the divergence of the elastic stress onto the finite element field.

\subsubsection{Consistent Projection}
We may interpret the standard finite element discretization of Equation~\eqref{eq:weak-force} as a gradient recovery of $\DIV \PPs$, specifically one that minimizes the $L^2$ error norm over the entire structural domain through a standard projection onto the finite element space after integrating by parts.

In this case, we apply the standard finite element discretization to Equation~\eqref{eq:weak-force} and obtain the linear system,
\begin{subequations}
\begin{align}
    \label{eq:consistent-div-pk1-projection}
    \mathbb{M} \vec{\Fb} &= \vec{\Lb}, \text{ with }
    \\
    \label{eq:consistent-div-pk1-projection-load-vector}
     \vec{L}_i &= \underset{(\Xb_q, w_q) \in \mathbb{H}_q}{-\sum} \PPs(\Xb_q, t)
    : \nabla_{\Xb} \phib_i(\Xb_q) w_q,
\end{align}
\end{subequations}
in which $\mathbb{M}$ is the standard finite element mass matrix defined for $\fespace$, $\vec{\Fb}$ is the vector of force coefficients defined by Equation~\eqref{eq:fem-force-basis}, and $\vec{\Lb}$ is a load vector.

\subsubsection{Inconsistent Projection}
Alternatively, we may discretize Equation~\eqref{eq:weak-force} by approximating the projection operator with $\mathbb{N}_q$ and the load vector with $\mathbb{H}_q$.
Consequently, we use $\mathbb{N}_q$ from Equation~\eqref{eq:nodal-quadrature} to assemble an \emph{inconsistent} system,
\begin{subequations}
\begin{align}
    \label{eq:inconsistent-div-pk1-projection}
    \mathbb{D} \vec{\Fb} &= \vec{\Lb}, \text{ with }
    \\
    \label{eq:inconsistent-div-pk1-projection-load-vector}
    \vec{L}_i &= \underset{(\Xb_q, w_q) \in \mathbb{H}_q}{-\sum} \PPs(\Xb_q, t)
    : \nabla_{\Xb} \phib_i(\Xb_q) w_q.
\end{align}
\end{subequations}
This results in a diagonal matrix $\mathbb{D}$ with
\begin{equation}
  \mathbb{D}_{i, j} = \sum_{(\Xb_q, w_q) \in \mathbb{N}_q} \phib_i(\Xb_q) \cdot \phib_j(\Xb_q) \tilde{w}_q
  = \delta_{ij} \tilde{w}_k,
  \label{eq:define-lumped-mass}
\end{equation}
in which $\phib_i(\Xb_k) \neq \mathbf{0}$.
We avoid the problem with potentially zero weights by defining $\tilde{w}_q$ as
\begin{equation}
  \tilde{w}_q
  =
  \begin{dcases*}
    w_q & if $w_q > 0$, \\
    1   & otherwise,
  \end{dcases*}
\end{equation}
in which $(\Xb_k, w_k)$ is the $k$th element of $\mathbb{N}_q$.
The choice of $1$ here is justified by Theorem~\ref{thm:fully-nodal-ignore-weights}.
Notice that the same load vector $\vec{\Lb}$ is used in Equations~\eqref{eq:consistent-div-pk1-projection-load-vector} and \eqref{eq:inconsistent-div-pk1-projection-load-vector}.

Using nodal quadrature guarantees that $\mathbb{D}$ is a diagonal matrix, which leads to an immediate formula for each $F_i(t)$ (i.e., the coefficients defined in Equation~\eqref{eq:fem-force-basis}):
\begin{equation}
    \label{eq:inconsistent-force-coefficient-definition}
    F_i(t) =
    \dfrac
    {
    \underset{(\Xb_q, w_q) \in \mathbb{H}_q}{-\sum} \PPs(\Xb_q, t) : \nabla_{\Xb} \phib_i(\Xb_q) w_q
    }
    {
      \mathbb{D}_{i,i}
    }.
\end{equation}
In general, because the transmission force acts as a singular force layer~\cite{Griffith2017}, the notion of pointwise convergence of $\Fb_h$ is not well defined on the boundary of the solid domain.
Hence we will only prove results about pointwise convergence on the interior of $\soliddom$, i.e., for nodes that are not on the boundary of $\soliddom$.
\begin{theorem}
  Assume $\mathbb{H}_q$ is chosen such that the maximum error in load vector entries on each element $K$ is bounded by $C |K| \lagrangiandx^n$, in which $\lagrangiandx$ is the largest element diameter (for all $K_i \in \tria$), $C$ is a constant dependent on $\PPs$'s derivatives, and $n$ is a positive integer.

  If $\PPs$ is sufficiently smooth and each $\phib_i(\Xb)$ from the basis defined in Equation~\eqref{eq:nodal-fe-basis} is nonnegative (e.g., for linear elements) and zero on the boundary of $\soliddom$, then the finite element coefficients calculated in Equation~\eqref{eq:inconsistent-force-coefficient-definition} are first-order accurate, i.e.,
  \begin{equation}
    \max_{(\Xb_q, w_q) \in \mathbb{N}_q}
    \left|
    \Fb_h(\Xb_q, t) - \nabla_{\Xb} \cdot \PPs(\Xb_q, t)
    \right|
    \leq
    C_3 \lagrangiandx,
  \end{equation}
  in which $C_3$ depends on the derivatives of $\nabla_{\Xb} \cdot \PPs$ and the mean value of the basis functions defined in Equation~\eqref{eq:nodal-fe-basis} on their support.
  \label{thm:first-order-forces}
\end{theorem}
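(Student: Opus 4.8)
The plan is to exploit the nodal-interpolation property of the basis together with the diagonal structure of $\mathbb{D}$ to reduce the claim to a single elementwise ratio estimate. First I would observe that, by the interpolation property in Equation~\eqref{eq:nodal-fe-assumption}, the nodal value $\Fb_h(\Xb_k,t)$ is exactly the triple of coefficients $(F_k, F_{m+k}, F_{2m+k})$ associated with node $\Xb_k$. Hence it suffices to show that each scalar coefficient $F_i$ from Equation~\eqref{eq:inconsistent-force-coefficient-definition} approximates the corresponding component of $\nabla_{\Xb}\cdot\PPs(\Xb_k,t)$ to first order, and then take the maximum over the interior nodes. Writing $i = (n-1)m + k$ for component $n\in\{1,2,3\}$ at node $k$, I would record that the scalar shape function $\phi_k$ occupies the $n$th slot of $\phib_i$ (so $\supp\phib_i = \supp\phi_k$), that the denominator satisfies $\mathbb{D}_{i,i} = \tilde w_k = \int_\soliddom \phi_k \dXb$ (using $w_k>0$ here), and that the numerator is the $\mathbb{H}_q$-quadrature approximation of $-\int_\soliddom \PPs : \nabla_{\Xb}\phib_i \dXb$.

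The second step rewrites the numerator. By the error hypothesis on $\mathbb{H}_q$, summed over the elements in the support, the numerator equals $-\int_\soliddom \PPs : \nabla_{\Xb}\phib_i \dXb$ up to a quadrature error bounded by $\sum_{K\subset\supp\phi_k} C|K|\lagrangiandx^n = C\lagrangiandx^n |\supp\phi_k|$. Because $\PPs$ is assumed smooth, I can undo the integration by parts of Equation~\eqref{eq:weak-force}; the boundary term from Equation~\eqref{eq:strong-force}, namely $\int_{\partial\soliddom}(\PPs\Nb)\cdot\phib_i \dAb$, vanishes since $\phib_i$ is zero on $\partial\soliddom$, leaving $\int_\soliddom (\nabla_{\Xb}\cdot\PPs)\cdot\phib_i \dXb$. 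As $\phib_i$ has the single nonzero component $\phi_k$ in slot $n$, this reduces to $\int_\soliddom g\,\phi_k \dXb$ with $g := (\nabla_{\Xb}\cdot\PPs)_n$, so that
\begin{equation*}
  F_i = \frac{\int_\soliddom g\,\phi_k \dXb}{\int_\soliddom \phi_k \dXb} + \frac{\order\!\left(\lagrangiandx^n |\supp\phi_k|\right)}{\int_\soliddom \phi_k \dXb}.
\end{equation*}

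The third step interprets the leading ratio as a weighted average and Taylor expands. Since $\phi_k \ge 0$ by hypothesis, $\int g\,\phi_k / \int \phi_k$ is a genuine convex average of $g$ over $\supp\phi_k$, whose diameter is $\order(\lagrangiandx)$. Expanding $g(\Xb) = g(\Xb_k) + \nabla g(\Xb_k)\cdot(\Xb-\Xb_k) + \order(\lagrangiandx^2)$ and dividing, the constant term reproduces $g(\Xb_k)$ exactly, while the linear term contributes $\nabla g(\Xb_k)\cdot\bigl(\int(\Xb-\Xb_k)\phi_k / \int\phi_k\bigr)$, a centroid offset of size $\order(\lagrangiandx)$; hence the leading ratio is $g(\Xb_k) + \order(\lagrangiandx)$. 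For the remaining term I would write $\int_\soliddom\phi_k \dXb = \bar\phi_k |\supp\phi_k|$, where $\bar\phi_k$ is the mean value of $\phi_k$ on its support, so the measures cancel and the term is $\order(\lagrangiandx^n/\bar\phi_k) = \order(\lagrangiandx)$ for $n\ge 1$. Collecting the three components at $\Xb_k$ and maximizing over interior nodes yields the stated bound, with $C_3$ absorbing the derivatives of $\nabla_{\Xb}\cdot\PPs$ together with the reciprocal mean value $1/\bar\phi_k$.

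The main obstacle is controlling the denominator uniformly from below: the whole argument hinges on $\int_\soliddom\phi_k \dXb$ being comparable to $|\supp\phi_k|$, equivalently on $\bar\phi_k$ being bounded away from zero on a shape-regular family of meshes. This is precisely where the nonnegativity assumption is used twice over — it guarantees both that the leading ratio is a convex average of $g$ (so no sign cancellation amplifies the error) and that the denominator cannot degenerate relative to the support measure. For sign-changing shape functions, as arise for higher-order Lagrange elements, $\int\phi_k$ can be far smaller than $|\supp\phi_k|$, the weighted-average interpretation fails, and this elementwise first-order estimate breaks down; the restriction to nonnegative $\phi_k$ in the hypothesis is therefore essential, while the remaining Taylor and quadrature estimates are routine.
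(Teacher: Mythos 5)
Your proposal is correct and follows essentially the same route as the paper's proof: both rest on summing the elementwise quadrature-error hypothesis over $\supp(\phib_i)$, undoing the integration by parts (with the boundary term killed by the vanishing of $\phib_i$ on $\partial\soliddomO$), using nonnegativity of the shape function to read $F_i$ as a weighted average of $(\nabla_{\Xb}\cdot\PPs)_d$, and dividing by $\int\phi_{i,d}\dXb$ so that $C_3$ absorbs the reciprocal mean value. The only difference is presentational — you Taylor-expand the weighted average about the node directly, whereas the paper bounds the integral from below at an argmin point $\Xb_c$ and then extrapolates from $\Xb_c$ to the node — and your explicit remark that the denominator must not degenerate is exactly the role the mean-value dependence plays in the paper's constant.
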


\begin{proof}
Rearranging Equation~\eqref{eq:inconsistent-force-coefficient-definition} yields

\begin{equation}
  \int_{\tria} F_i(t) \phib_i(\Xb) \cdot \mathbf{1} \dXb
  =
  \underset{(\Xb_q, w_q) \in \mathbb{H}_q}{-\sum} \PPs(\Xb_q, t) : \nabla_{\Xb} \phib_i(\Xb_q) w_q.
\end{equation}
  Next, we subtract the right-hand side of Equation~\eqref{eq:strong-force} in which $\phib(\Xb) = \phib_i(\Xb)$ from both sides:
\begin{align}
  \int_{\tria}
  \left(
  F_i(t) \mathbf{1}
  -
  \nabla_{\Xb} \cdot \PPs(\Xb, t)\right) \cdot \phib_i(\Xb)
  \dXb
  &=
  \underset{(\Xb_q, w_q) \in \mathbb{H}_q}{-\sum} \PPs(\Xb_q, t) : \nabla_{\Xb} \phib_i(\Xb_q) w_q \\
  &\phantom{=} -
  \int_{\tria} \left(\nabla_{\Xb} \cdot \PPs(\Xb, t)\right) \cdot \phib_i(\Xb) \dXb
  \nonumber
  \\
  \label{eq:integrate-pk1-by-parts}
  &=
  \underset{(\Xb_q, w_q) \in \mathbb{H}_q}{-\sum} \PPs(\Xb_q, t) : \nabla_{\Xb} \phib_i(\Xb_q) w_q \\
  &\phantom{=} +
  \int_{\tria} \PPs(\Xb, t) : \nabla_{\Xb} \phib_i(\Xb) \dXb.
  \nonumber
\end{align}
By assumption on the accuracy of $\mathbb{H}_q$, we can bound the right-hand side of Equation~\eqref{eq:integrate-pk1-by-parts} (and, therefore, the left-hand side of the equality) as
\begin{equation}
  \left|
  \int_{\tria}
  \left(
  F_i(t) \mathbf{1}
  -
  \nabla_{\Xb} \cdot \PPs(\Xb, t)\right) \cdot \phib_i(\Xb)
  \dXb
  \right|
  \leq
  C |\supp(\phib_i(\Xb))| \lagrangiandx^n.
\end{equation}
Because the basis functions are nodal interpolants, $\phib_i(\Xb)$ must be nonzero in exactly one component.
We record that component's index as $d$ and the nonzero part of $\phib_i(\Xb)$ as $\phi_{i,d}(\Xb)$.
Let
\begin{equation}
  \Xb_c = \argmin_{\Xb \in \supp(\phib_i(\Xb))}
  \left|
  (F_i(t) \mathbf{1} - \nabla_{\Xb} \cdot \PPs(\Xb, t))_d
  \right|.
\end{equation}
Hence, as $\phi_{i,d}(\Xb) \geq 0$,
\begin{equation}
  \left|
  \int_\tria
  \left(F_i(t) \mathbf{1} - \nabla_{\Xb} \cdot \PPs(\Xb, t)\right) \cdot \phib_i(\Xb) \dXb
  \right|
  \geq
  \left|
  \left(F_i(t) \mathbf{1} - \nabla_{\Xb} \cdot \PPs(\Xb, t)\right)_d
  \right| \bigg|_{\Xb = \Xb_c}
  \int_\tria \phi_{i,d}(\Xb) \dXb
\end{equation}
so
\begin{align}
  \left|
  \left(F_i(t) \mathbf{1} - \nabla_{\Xb} \cdot \PPs(\Xb, t)\right)_d
  \right| \bigg|_{\Xb = \Xb_c}
  &\leq
  \dfrac{C |\supp(\phib_i(\Xb))| \lagrangiandx^n}{\int_\tria \phi_{i,d}(\Xb) \dXb} \\
  &=
  C_2 \lagrangiandx^n,
\end{align}
in which $C_2$ is a constant dependent on $C$ and the mean value of $\phib_i(\Xb)$ (which must be between $0$ and $1$) on $\supp(\phib_i(\Xb))$.
Consequently, applying a constant extrapolation from $\Xb_c$ to the node at which $\phib_i(\Xb)$ interpolates a value, we achieve the stated result that each $F_i(t)$ is at least first-order accurate.
\end{proof}

\subsection{Force Spreading}
The standard discretization of force spreading using elemental quadrature involves $\mathbb{A}_q$:
\begin{subequations}
\begin{align}
    \label{eq:discrete-spread-elemental-1}
    f^1_{\xface} = \sum_{(\Xb_q, w_q) \in \mathbb{A}_q} F^1(\Xb_{q}, t)
    \, \delta_h(\xb_{\xface} - \Chib_h(\Xb_{q},t))w_{q},
    \\
    \label{eq:discrete-spread-elemental-2}
    f^2_{\yface} = \sum_{(\Xb_q, w_q) \in \mathbb{A}_q} F^2(\Xb_{q}, t)
    \, \delta_h(\xb_{\yface} - \Chib_h(\Xb_{q},t))w_{q},
    \\
    \label{eq:discrete-spread-elemental-3}
    f^3_{\zface} = \sum_{(\Xb_q, w_q) \in \mathbb{A}_q} F^3(\Xb_{q}, t)
    \, \delta_h(\xb_{\zface} - \Chib_h(\Xb_{q},t))w_{q}.
\end{align}
\end{subequations}
Alternatively, if we use $\mathbb{N}_q$ to approximate the integral, we obtain
\begin{subequations}
\begin{align}
    \label{eq:discrete-spread-nodal-1}
    f^1_{\xface} &= \sum_{(\Xb_q, w_q) \in \mathbb{N}_q} F^1_{q}(t)
    \, \delta_h(\xb_{\xface} - \Chib_h(\Xb_{q}, t)) \mathbb{D}_{q, q},
    \\
    \label{eq:discrete-spread-nodal-2}
    f^2_{\yface} &= \sum_{(\Xb_q, w_q) \in \mathbb{N}_q} F^2_{q}(t)
    \, \delta_h(\xb_{\yface} - \Chib_h(\Xb_{q}, t)) \mathbb{D}_{q, q},
    \\
    \label{eq:discrete-spread-nodal-3}
    f^3_{\zface} &= \sum_{(\Xb_q, w_q) \in \mathbb{N}_q} F^3_{q}(t)
    \, \delta_h(\xb_{\zface} - \Chib_h(\Xb_{q}, t)) \mathbb{D}_{q, q}.
\end{align}
\end{subequations}

A theoretical investigation of the effect of spreading with $\mathbb{N}_q$ versus spreading with $\mathbb{A}_q$ is beyond the scope of this study. However, Section~\ref{sec:benchmarks} includes some computational benchmarks showing that, for a sufficiently dense mesh, the differences in practice are acceptably small.

\subsection{Velocity Projection and Velocity Interpolation}
Discretizing Equation~\eqref{eq:semidiscrete-velocity-projection} with $\mathbb{A}_q$ recovers the velocity coupling operator described by Griffith and Luo~\cite{Griffith2017}:
\begin{align}
    \label{eq:elemental-velocity-projection}
    \mathbb{M} \vec{\Ub} &= \LbVecIB, \text{ with }
    \\
    \LbVecIB_i &= \sum_{(\Xb_q, w_q) \in \mathbb{A}_q} \Ub^{\text{IB}}(\Xb_q) \cdot \phib_i(\Xb_q) w_q.
\end{align}
Alternatively, if we discretize both the left and right sides with $\mathbb{N}_q$ from Equation~\eqref{eq:nodal-quadrature} then we obtain
\begin{equation}
    \label{eq:inconsistent-velocity-projection}
    \mathbb{D} \vec{\Ub} = \mathbb{D} \UbVecIB,
\end{equation}
in which $\vec{\Ub}$ is the vector of finite element coefficients of Equation~\eqref{eq:fem-velocity-basis}, $\UbVecIB$ is a vector populated with values of $\Ub^{\text{IB}}$ at the nodes of $\tria$,
and $\mathbb{D}$ is the same diagonal mass matrix that appeared in Equation~\eqref{eq:inconsistent-div-pk1-projection} (since we use the same space $\fespace$ for both $\Ub$ and $\Fb$).
In this case, $\mathbb{D}$ appears on both the left and right sides because, with the choice of nodal quadrature,
\begin{align}
    \int \Ub^{\text{IB}}(\Xb, t) \cdot \phib_i(\Xb) \dXb
    \approx
    \sum_{(\Xb_q, w_q) \in \mathbb{N}_q} \Ub^{\text{IB}}(\Xb_q, t) \cdot \phib_i(\Xb_q) w_q
    = \Ub^{\text{IB}}(\Xb_i) \mathbb{D}_{i,i}.
\end{align}
Hence, with nodal quadrature, the projection of the velocity with $\Ub^{\text{IB}}$ evaluated at the points of $\mathbb{N}_q$ is exactly the same as interpolating $\ub$ at the nodes of $\tria$ with $\Pi_{\fespace}$.

\subsection{A Fully Nodal Coupling Approach}
Combining results from the last three sections we arrive at the following straightforward, but critical, result:

\begin{theorem}
  \label{thm:fully-nodal-ignore-weights}
  If, in the IFED method, the force projection, force spreading, and velocity projection operators are all discretized with the same nodal quadrature rule $\mathbb{N}_q$, then the values of $w_q$, which correspond to the diagonal entries of $\mathbb{D}$, can be chosen as \emph{arbitrary} nonzero values.
\end{theorem}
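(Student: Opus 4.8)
The plan is to track exactly where the nodal weights $w_q$—equivalently, the diagonal entries $\mathbb{D}_{q,q}$—enter each of the three coupling operators, and to show that in every physically meaningful output they cancel. The central observation is that the hypothesis forces the \emph{same} diagonal matrix $\mathbb{D}$ into both the force projection (where it appears in a denominator) and the force spreading (where it appears as a multiplier), so that the two occurrences annihilate one another. The load vector, by contrast, is assembled with the fixed higher-order rule $\mathbb{H}_q$ and never sees the nodal weights at all.

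First I would dispose of velocity projection, which is immediate. Since Equation~\eqref{eq:inconsistent-velocity-projection} reads $\mathbb{D}\vec{\Ub} = \mathbb{D}\UbVecIB$, and $\mathbb{D}$ is invertible precisely when every $w_q \neq 0$, left-multiplication by $\mathbb{D}^{-1}$ yields $\vec{\Ub} = \UbVecIB$ independent of the weights. The interpolated data $\UbVecIB$ is built purely from the regularized delta function and the Cartesian velocity and never involves $\mathbb{D}$, so the projected structure velocity is manifestly unchanged by any nonzero rescaling of the diagonal.

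Next I would treat the force pathway, the only place where the cancellation is nontrivial. The inconsistent projection produces the coefficient $F_i(t) = L_i / \mathbb{D}_{i,i}$ from Equation~\eqref{eq:inconsistent-force-coefficient-definition}, and since $L_i$ is assembled with $\mathbb{H}_q$ it carries no dependence on the nodal weights. In nodal spreading, Equations~\eqref{eq:discrete-spread-nodal-1}--\eqref{eq:discrete-spread-nodal-3}, this same coefficient is reintroduced multiplied by $\mathbb{D}_{q,q}$; because the projection and the spread share the identical $\mathbb{N}_q$ and hence the identical $\mathbb{D}$, the product $F_q(t)\,\mathbb{D}_{q,q} = L_q$ is weight-free. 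Substituting back, the spread Eulerian force collapses to $\fb = \sum_q L_q\,\delta_h(\xb - \Chib_h(\Xb_q,t))$, in which no $w_q$ survives.

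The conclusion is then immediate: the two quantities that feed back into the rest of the discretization—the spread force $\fb$ on the Cartesian grid and the structure velocity $\vec{\Ub}$—are both independent of the nonzero diagonal entries of $\mathbb{D}$, so those entries may be assigned arbitrary nonzero values, with nonzero required only for invertibility. I do not expect a serious obstacle here; the one point demanding care is the bookkeeping of the index map between the $3m$ basis-function indices and the $m$ node indices, so that the denominator $\mathbb{D}_{i,i}$ in the projection and the multiplier $\mathbb{D}_{q,q}$ in the spread are correctly identified as the same scalar $\tilde{w}_k$. I would also state explicitly, before invoking the cancellation, that the load vector genuinely uses $\mathbb{H}_q$ rather than $\mathbb{N}_q$, since that is what guarantees $L_i$ is truly weight-independent.
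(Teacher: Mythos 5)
Your proposal is correct and follows essentially the same route as the paper's own proof: the velocity case is dispatched by noting $\mathbb{D}$ appears on both sides of Equation~\eqref{eq:inconsistent-velocity-projection}, and the force case by substituting Equation~\eqref{eq:inconsistent-force-coefficient-definition} into the nodal spreading formulas so that the $\mathbb{D}_{q,q}$ in the denominator cancels the $\mathbb{D}_{q,q}$ multiplier, leaving only the $\mathbb{H}_q$-assembled load vector. Your added remarks about invertibility requiring nonzero weights and about the index bookkeeping are sensible clarifications but do not change the argument.
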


\begin{proof}
  Equation~\eqref{eq:inconsistent-velocity-projection} clearly establishes the result for the velocity.
  Without loss of generality, we shall only examine the first component of force; the others are the same except for the changes of indices.
  Substituting Equation~\eqref{eq:inconsistent-force-coefficient-definition} into Equation~\eqref{eq:discrete-spread-nodal-1} yields
  \begin{subequations}
  \begin{align}
        f^1_{\xface}
        &= \sum_{(\Xb_q, w_q) \in \mathbb{N}_q} \Fb^{1}(\Xb_{q}, t)
           \, \delta_h(\xb_{\xface} - \Chib_h(\Xb_{q}, t)) \mathbb{D}_{q, q}
        \\
        &= \sum_{(\Xb_q, w_q) \in \mathbb{N}_q} \Fb^{1}_q(t)
           \, \delta_h(\xb_{\xface} - \Chib_h(\Xb_{q}, t)) \mathbb{D}_{q, q}
        \\
        &= \sum_{(\Xb_q, w_q) \in \mathbb{N}_q}
           \left(
           \dfrac
           {
           \underset{(\Xb_j, w_j) \in \mathbb{H}_q}{-\sum} \PPs(\Xb_j, t) : \nabla_{\Xb} \phib_q(\Xb_j) w_j
           }
           {
             \mathbb{D}_{q,q}
           }
           \right)
           \delta_h(\xb_{\xface} - \Chib_h(\Xb_{q}, t)) \mathbb{D}_{q, q}
        \\
        &= \sum_{(\Xb_q, w_q) \in \mathbb{N}_q}
           \left(
           \underset{(\Xb_j, w_j) \in \mathbb{H}_q}{-\sum} \PPs(\Xb_j, t) : \nabla_{\Xb} \phib_q(\Xb_j) w_j
           \right)
           \delta_h(\xb_{\xface} - \Chib_h(\Xb_{q}, t)),
           \label{eq:nodal-spreading-weight-independence}
  \end{align}
  \end{subequations}
  which is independent of $\mathbb{D}$.
\end{proof}

\begin{remark}
  Integrating $\nabla_{\Xb} \phib_q(\Xb)$ by parts in Equation~\eqref{eq:nodal-spreading-weight-independence} (and, like in Theorem~\ref{thm:first-order-forces}, ignoring boundary integrals) yields the force density
  \begin{equation}
    \underset{(\Xb_j, w_j) \in \mathbb{H}_q}{-\sum} \PPs(\Xb_j, t) : \nabla_{\Xb} \phib_q(\Xb_j) w_j
    \approx
    \int_{\mathrm{supp}(\phib_q(\Xb))} \nabla_{\Xb} \cdot \PPs(\Xb, t) \cdot \phib_q(\Xb) \dXb.
  \end{equation}
  This is a weighted average (biased towards its value at $\Xb_q$) of $\nabla_{\Xb} \cdot \PPs$ across $\mathrm{supp}(\phib_q(\Xb))$.
  This is analogous to Equation 4.18 in the work of Peskin~\cite{Peskin2002}.

  The fully nodal coupling approach described here is essentially the classic IB method, in which the force is computed in a slightly different way (resulting from its definition as a finite element field instead of pointwise values on a grid).
  \label{rem:fully-lumped-ib-equivalence}
\end{remark}

\begin{remark}
  Although mass lumping is commonly used with linear finite elements, in general it cannot be used directly with higher-order finite elements because the corresponding nodal quadrature rules will have either zero or negative weights at least for some components.
  For example, the integrals of the vertex basis functions of the standard two-dimensional and three-dimensional tetrahedral quadratic elements are zero, so their corresponding entries in $\mathbb{D}$ will be zero.
  Other high-order elements, such as those based on tensor products or special bubble functions~\cite{Geevers_2018}, do not exhibit such issues.

  Since the fully nodal scheme is independent of $\mathbb{D}$, it may be used with finite elements like $\Ptwo$ that do not normally work with mass lumping.
  This is examined in Section~\ref{sec:benchmarks}.
\end{remark}

The next theorem shows that the use of $\mathbb{D}$, when combined with $\mathbb{N}_q$ for interaction, does not affect the typical moment conservation properties of the IB method.
\begin{theorem}
  \label{thm:equal-forces}
  Let the kernel function $\delta_h$ satisfy the zeroth and first discrete moment conditions, as described by Peskin~\cite{Peskin2002},
  \begin{align}
    \label{eq:zeroth-moment-condition}
    \sum_{i,j,k} \euleriandx^3 \delta_h(\xb_{\xface} - \xb) = 1 \text{ and } \\
    \label{eq:first-moment-condition}
    \sum_{i,j,k} \euleriandx^3 x^1_{\xface}\delta_h(\xb_{\xface} - \xb) = x^1,
  \end{align}
  along with the equivalent conditions in the second and third components.
  If we use the same quadrature rule $\mathbb{Q}_q$ for both integration of the mass matrix and force spreading, then the force defined on the Cartesian grid will always satisfy the \emph{same} zeroth and first force moment conditions, \emph{independent} of $\mathbb{Q}_q$.
\end{theorem}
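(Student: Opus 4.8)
The plan is to show that, once the discrete moment conditions on $\delta_h$ are applied, each grid force moment becomes a pairing of the force coefficient vector $\vec{\Fb}$ against the mass matrix $\mathbb{M}$ assembled with $\mathbb{Q}_q$, and that the force projection $\mathbb{M}\,\vec{\Fb}=\vec{\Lb}$ (Equation~\eqref{eq:consistent-div-pk1-projection}), which uses that \emph{same} $\mathbb{M}$, collapses the pairing onto the load vector $\vec{\Lb}$. Since $\vec{\Lb}$ is assembled from $\mathbb{H}_q$ (Equation~\eqref{eq:consistent-div-pk1-projection-load-vector}) and the current configuration, it carries no dependence on $\mathbb{Q}_q$: the $\mathbb{Q}_q$-weights introduced by spreading are exactly those removed by the projection, generalizing the weight cancellation of Theorem~\ref{thm:fully-nodal-ignore-weights} from the diagonal $\mathbb{D}$ to a full $\mathbb{M}$. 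I would treat both moments at once by noting that each reduces to a sum $\sum_{(\Xb_q,w_q)\in\mathbb{Q}_q} F^1(\Xb_q)\,g(\Xb_q)\,w_q$ with a weight $g\in\fespace$: $g\equiv 1$ for the zeroth moment and $g=\chi^1_h$ for the first. Following the theorem's convention I argue only the first component, the others being identical up to index changes.

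For the zeroth moment I would write the spread force with a general rule, $f^1_{\xface}=\sum_{(\Xb_q,w_q)\in\mathbb{Q}_q} F^1(\Xb_q)\,\delta_h(\xb_{\xface}-\Chib_h(\Xb_q,t))\,w_q$, form $\sum_{i,j,k}\euleriandx^3 f^1_{\xface}$, exchange the order of summation, and apply the zeroth discrete moment condition~\eqref{eq:zeroth-moment-condition} at $\xb=\Chib_h(\Xb_q,t)$, which collapses the Eulerian sum to unity and leaves $\sum_q F^1(\Xb_q)\,w_q$. Expanding $F^1$ in the nodal basis~\eqref{eq:nodal-fe-basis} and using that interpolatory Lagrange bases reproduce constants ($\sum_\ell \phi^1_\ell\equiv 1$ on each component), I recognize this as the sum of the first-component entries of $\mathbb{M}\,\vec{\Fb}$. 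Because $\mathbb{M}$ is symmetric and block-diagonal across components (the primitive basis functions of~\eqref{eq:nodal-fe-basis} satisfy $\phib_i\cdot\phib_j=0$ for $i,j$ in different components), the projection turns this into $\sum_\ell L^1_\ell$, independent of $\mathbb{Q}_q$. As a check, $\sum_\ell \phi^1_\ell\equiv 1$ forces $\sum_\ell L^1_\ell=0$, since the integrand $\PPs:\nabla_{\Xb}(\sum_\ell \phi^1_\ell)$ vanishes identically; the net spread force is therefore zero, matching the total internal elastic force.

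For the first moment I would repeat the computation for $\sum_{i,j,k}\euleriandx^3 x^1_{\xface}\,f^1_{\xface}$. The only change is that the first discrete moment condition~\eqref{eq:first-moment-condition} replaces the Eulerian factor $x^1$ by $\chi^1_h(\Xb_q,t)$, the first component of the discrete deformation~\eqref{eq:fem-mapping-basis} at the quadrature point. The crucial observation is that $\chi^1_h\in\fespace$, so it expands in the same basis; the sum $\sum_q F^1(\Xb_q)\,\chi^1_h(\Xb_q)\,w_q$ equals $\sum_{i,\ell} F^1_i\,\mathbb{M}_{i,\ell}\,\chi^1_\ell$ over the first-component block, and the symmetry of $\mathbb{M}$ together with the projection collapses it to $\sum_\ell L^1_\ell\,\chi^1_\ell$, again independent of $\mathbb{Q}_q$ and equal to the discrete Lagrangian first moment of the elastic force.

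The main obstacle is making this cancellation \emph{exact} rather than merely approximate, which rests on two points. First, assembling the mass matrix and spreading with the \emph{same} rule must make the $\mathbb{Q}_q$-weights appear precisely in the combination $\mathbb{M}\,\vec{\Fb}$ after the moment conditions are applied, so that the projection identity applies verbatim. Second, the weight $g$ must lie in $\fespace$ for this identification to be exact; this is what distinguishes the first moment from the zeroth, as the moment condition~\eqref{eq:first-moment-condition} conveniently produces $\chi^1_h$, which is an $\fespace$-interpolant by construction~\eqref{eq:fem-mapping-basis}, so no quadrature error from $\mathbb{Q}_q$ survives. Once both points are in place the independence from $\mathbb{Q}_q$ is immediate, since $\vec{\Lb}$ and the coefficients $\chi^1_\ell$ are both fixed before $\mathbb{Q}_q$ is ever chosen.
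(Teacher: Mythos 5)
Your proposal is correct and follows essentially the same route as the paper: exchange the order of summation, apply the discrete moment conditions of $\delta_h$ to collapse the Eulerian sum, recognize the resulting Lagrangian sum as $\vec{g}^T\mathcal{M}\vec{\Fb}$ with $g\equiv 1$ or $g=\chi^1_h$ (both in $\fespace$), and use the projection identity $\mathcal{M}\vec{\Fb}=\vec{\Lb}$ to obtain a quantity independent of $\mathbb{Q}_q$. The observations about block structure, partition of unity, and the vanishing of the total spread force are valid refinements that the paper leaves implicit.
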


\begin{proof}
  Let $\mathcal{M}$ be the mass matrix associated with the quadrature rule $\mathbb{Q}_q$ (e.g., $\mathcal{M}$ = $\mathbb{D}$ if $\mathbb{Q}_q = \mathbb{N}_q$ and $\mathcal{M} = \mathbb{M}$ if $\mathbb{Q}_q = \mathbb{A}_q$).
  Then
  \begin{equation}
    \sum_{(\Xb_q, w_q) \in \mathbb{Q}_q} (F^1(\Xb_q, t) + F^2(\Xb_q, t) + F^3(\Xb_q, t)) w_q
    =
    \vecOneT \mathcal{M} \vec{\Fb}
    = \vecOneT \vec{\Lb}
    \label{eq:summation-of-mass-matrix}
  \end{equation}
  by either Equation~\eqref{eq:consistent-div-pk1-projection} or Equation~\eqref{eq:inconsistent-div-pk1-projection}.
  Without loss of generality, we show the $x$-component in detail:
  \begin{align}
    \sum_{i,j,k} \euleriandx^3 f^1_{\xface}
    &= \sum_{i,j,k} \euleriandx^3 \left\{
        \sum_{(\Xb_q, w_q) \in \mathbb{Q}_q} F^1(\Xb_q, t) \delta_h\left(\xb_{\xface} - \Chib_h(\Xb_q, t)\right) w_q
       \right\} \\
    &= \sum_{ (\Xb_q, w_q) \in \mathbb{Q}_q}  F^1(\Xb_q, t) w_q
       \sum_{i, j,k} \euleriandx^3 \delta_h\left(\xb_{\xface} - \Chib_h(\Xb_q, t)\right) \\
    &= \sum_{ (\Xb_q, w_q) \in \mathbb{Q}_q} F^1(\Xb_q, t) w_q \cdot 1.
  \end{align}
  The last equality is the $x$-component of left hand side of Equation~\eqref{eq:summation-of-mass-matrix}.
  Combining this with the corresponding equations for the other components yields
  \begin{align}
    \sum_{i,j,k} \euleriandx^3 \left(f^1_{\xface} + f^2_{\yface} + f^3_{\zface} \right) =
    \vecOneT \vec{\Lb}.
  \end{align}
  The argument for first moments follows similarly.
  We show the calculation for the $f^1_{\xface} x^1_{\xface}$ moment, in which $x^1_{\xface} = i\euleriandx$ is the $x$-coordinate of the corresponding edge.
  For this moment we have
  \begin{align}
    \sum_{i,j,k} \euleriandx^3 f^1_{\xface} x^1_{\xface}
    &= \sum_{i,j,k} \euleriandx^3 x^1_{\xface} \left\{
       \sum_{(\Xb_q, w_q) \in \mathbb{Q}_q} F^1(\Xb_q,t) \delta_h\left(\xb_{\xface} - \Chib_h(\Xb_q,t)\right)w_q
       \right\} \\
    &= \sum_{(\Xb_q, w_q) \in \mathbb{Q}_q}  F^1(\Xb_q, t) w_q
       \sum_{i,j,k} \euleriandx^3 x^1_{\xface} \delta_h\left(\xb_{\xface} - \Chib_h(\Xb_q, t)\right) \\
    &= \sum_{(\Xb_q, w_q) \in \mathbb{Q}_q} F^1(\Xb_q, t)\chi^1(\Xb_q, t) w_q.
  \end{align}
  The last equality follows from Equation~\eqref{eq:first-moment-condition}.
  Summing the components yields
  \begin{equation}
    \sum_{(\Xb_q, w_q) \in \mathbb{Q}_q}
    (F^1(\Xb_q, t) \chi^1(\Xb_q, t) + F^2(\Xb_q, t) \chi^2(\Xb_q, t) + F^3(\Xb_q, t) \chi^3(\Xb_q, t)) w_q
    =
    \vecT{\Chib} \mathcal{M} \vec{\Fb} = \vecT{\Chib} \vec{\Lb}
  \end{equation}
  by the definition of $\mathcal{M}$.
  Hence
  \begin{equation}
    \sum_{i,j,k} \euleriandx^3\left(f^1_{\xface} x^1_{\xface} + f^2_{\yface} x^2_{\yface} + f^3_{\zface} x^3_{\zface} \right) = \Chib^T \vec{\Lb}.
  \end{equation}
\end{proof}

\begin{remark}
  Essentially, this means if we use the same quadrature rule to approximate the mass matrix and the coupling operators, $\fb$ and $\Fb$ are equivalent as densities, i.e., the discrete integral of the discrete force $\fb$ on the Cartesian grid will equal the integral of $\Fb_h$ with $\mathbb{Q}_q$ (from substituting $\vec{\Lb} = \mathcal{M} \vec{\Fb}$) on the structural mesh.
  Further, an important consequence of this result is that using nodal quadrature for spreading and interpolation but consistent quadrature for force projection is \emph{not guaranteed} to discretely maintain this equivalence.
  However, this correspondence is necessary to avoid the spurious creation or destruction of momentum in the fluid-structure coupling~\cite{Peskin2002}.
  Hence, if we use nodal quadrature for interaction, we should also use nodal quadrature to define the mass matrix used in the force projection, and vice versa.
\end{remark}

\begin{remark}
  The above arguments can be applied to all other first force moments.
  We may combine them all together to show that the total Eulerian torque (defined on a Cartesian staggered grid) always equals the same quantity if we use the same quadrature rule for the mass matrix and the Eulerian-Lagrangian coupling.
  Furthermore, by using the first moments of force we may demonstrate that the total potential energy in the Lagrangian frame $\left(\vec{\Chib} - \vec{\Chib}_0\right)^T \vec{\Lb}$ is conserved when prolonged to the Eulerian grid.
  Although we do not have an identity for the Lagrangian kinetic energy, the adjointness of the coupling operators ensures that energy is not spuriously created or destroyed.
\end{remark}

\section{Implementation}
\label{sec:implementation}
In this study, the IFED nodal coupling scheme uses both Gaussian and nodal quadrature schemes.
Specifically, the force projection uses Gaussian quadrature for the load vector and nodal quadrature for the approximate mass matrix, whereas the coupling operators only use nodal quadrature.
In contrast, the elemental coupling scheme uses Gaussian quadrature to approximate all integrals.
Algorithms~\ref{alg:nodal-algorithm} and \ref{alg:elemental-algorithm} summarize both the nodal and elemental coupling methods.

\begin{algorithm}
  \caption{Nodal Coupling Scheme}
  \label{alg:nodal-algorithm}
  \begin{algorithmic}[1]
    \State Define the nodal quadrature rule as
    \begin{equation}
      \mathbb{N}_q = \{(\Xb_q, w_q)\}_{q=1}^{m} \text{ in which }
      \Xb_q \text{ is a node of } \tria \text{ and } w_q = \int \phib_q(\Xb) \cdot \boldsymbol{1} \dXb
    \end{equation}

    \Procedure{Velocity Interpolation}{$\Chib_h, \ub_h$, $\delta_h$}\Comment{Interpolate $\ub_h$ in the FE space}
    \State Compute the components of $\UbVecIB$ by evaluating $\delta_h$ at each node's displaced location $\Chib_h(\Xb_q)$ for each $(\Xb_q, w_q) \in \mathbb{N}_q$:
    \begin{subequations}
    \begin{align}
      U^{\text{IB},1}_q &= \sum_{i,j,k} u^1_{\xface}
      \, \delta_h(\xb_{\xface} - \Chib_h(\Xb_q)) \euleriandx^3
      \\
      U^{\text{IB},2}_q &= \sum_{i,j,k} u^2_{\yface}
      \, \delta_h(\xb_{\yface} - \Chib_h(\Xb_q)) \euleriandx^3
      \\
      U^{\text{IB},3}_q &= \sum_{i,j,k} u^3_{\zface}
      \, \delta_h(\xb_{\zface} - \Chib_h(\Xb_q)) \euleriandx^3
    \end{align}
    \end{subequations}
    \State \textbf{return} $\UbVecIB$\Comment{vector of FE velocity coefficients}
    \EndProcedure%

    \Procedure{Force Spreading}{$\Chib_h, \delta_h$}
    \State For each FE basis function compute a mean force contribution
    \begin{equation}
      \tilde{F}_i =
      \underset{(\Xb_q, w_q) \in \mathbb{H}_q}{-\sum} \PPs(\Xb_q) : \nabla_{\Xb} \phib_i(\Xb_q) w_q
    \end{equation}
    \State Spread each component of the mean force contribution at each node's displaced location $\Chib_h(\Xb_q)$ as
    \begin{subequations}
    \begin{align}
      f^1_{\xface} &= \sum_{(\Xb_q, w_q) \in \mathbb{N}_q} \tilde{F}^1_{q}
      \, \delta_h(\xb_{\xface} - \Chib_h(\Xb_{q}))
      \\
      f^2_{\yface} &= \sum_{(\Xb_q, w_q) \in \mathbb{N}_q} \tilde{F}^2_{q}
      \, \delta_h(\xb_{\yface} - \Chib_h(\Xb_{q}))
      \\
      f^3_{\zface} &= \sum_{(\Xb_q, w_q) \in \mathbb{N}_q} \tilde{F}^3_{q}
      \, \delta_h(\xb_{\zface} - \Chib_h(\Xb_{q}))
    \end{align}
    \end{subequations}

    \State \textbf{return} $\fb_h$\Comment{Cartesian grid force representation}
    \EndProcedure%
  \end{algorithmic}
\end{algorithm}

Several of our benchmarks do not have analytic solutions.
In these cases, we compute benchmark solutions using a stabilized $\Pone$/$\Pone$ FE method for large-deformation incompressible elasticity~\cite{Chiumenti, Masud2013} implemented in BeatIt~\cite{Beatit}.
The numerical methods described in Section~\ref{sec:approximating-the-stress-projection-and-IB-coupling} are implemented in the IBAMR library~\cite{IBAMR, Griffith2007}.
The time stepping and fluid discretization schemes are described at length by Griffith and Luo~\cite{Griffith2017}.
Both IBAMR and BeatIt rely on the parallel C++ FE library libMesh~\cite{libMeshPaper}, and on linear and nonlinear solver infrastructure provided by the PETSc library~\cite{petsc-web-page}.

Earlier work~\cite{Griffith2017} suggested that the fluid-solid coupling is sensitive to the relative grid spacing between the Cartesian grid and structural mesh.
This ratio of grid spacings, which we call the \emph{mesh factor}, is defined by $\mfac = \frac{\lagrangiandx}{\efac \euleriandx}$, in which the \emph{element factor} $\efac$ is $1$ for linear elements and $2$ for quadratic elements.
$\efac$ reflects the fact that nodes are approximately $\lagrangiandx/2$ apart for quadratic elements.
Specifically, it has been shown that models in which shear stress dominates pressure along the fluid-structure interface give higher accuracy with a relatively coarser structural mesh, whereas pressure-loaded models give higher accuracy with a relatively finer structural mesh compared to the Cartesian grids~\cite{Griffith2017,Lee2021}.

We also remark that the nodal quadrature rule used for $\Ptwo$ elements is not the typical Newton-Cotes rule used for these elements, because that rule assigns weights of zero to the vertices.
Instead, we use a composite trapezoid rule, which has positive weights but cannot integrate quadratics exactly.
Fortunately, as discussed in Theorem~\ref{thm:fully-nodal-ignore-weights}, with nodal coupling (i.e., using nodal quadrature for both interaction and approximating the projection operators) this implementation detail is irrelevant because the nodal coupling scheme does not require any nodal quadrature weights.
In particular, the precision of this nodal quadrature rule has no impact on the results of the overall nodal IFED methodology.

For our benchmarks, we use various types of boundary conditions, including traction and Dirichlet, on the Cartesian grid.
Details are described by Griffith~\cite{Griffith2009}.
Similarly, for the structural mechanics, both traction and Dirichlet boundary conditions are applied on their respective parts of the boundary of the structural mesh.
However, we use a penalty approach that approximately imposes Dirichlet conditions on the structure along a portion of the fluid-structure interface via penalizing deviations from a prescribed displacement and damping non-zero velocities.
Specifically, we apply a surface traction force
\begin{equation}
  \Tb_{\text{S}} = \kappa_{\text{S}}\left(\cb_{\text{S}} - \cb \right) - \eta_{\text{S}}\Ub
\end{equation}
on the part of $\partial\soliddom$ where Dirichlet conditions are desired.
Here, $\kappa_{\text{S}}$ is a penalty parameter that scales like $\kappa_{\text{S}} \propto \frac{\Delta x}{\left(\Delta t\right)^2}$, and $\eta_{\text{S}}$ is a damping parameter that scales like $\eta_{\text{S}} \propto \frac{\rho}{\Delta t}$.

In a similar fashion, penalty methods and damping parameters are used to enforce rigidity in the interior of the structural mesh for some of the benchmarks.
We achieve this by imposing a body force density of the form
\begin{equation}
  \Fb_{\text{B}} = \kappa_{\text{B}} \left(\cb_{\text{B}} - \cb \right) - \eta_{\text{B}} \Ub,
\end{equation}
in which $\Fb_{\text{B}}$ has units of force per unit volume.
Additionally, a body force with only the damping term may be used for a flexible structure to assist the system in reaching steady state.
We use the same scaling as used for the surface penalty parameters.

Finally, both the fluid and solid are modeled as incompressible in our benchmarks.
Incompressibility on the Cartesian grid is ensured in the discretized equations throughout the entire computational domain, i.e., $\nabla_h \cdot \ub = 0$ in each Cartesian grid cell.
However, when the discrete velocity is restricted to the structural mesh, pointwise incompressibility is generally lost, and $J \neq 1$ for some points $\xb \in \soliddom$.
This results mainly from the discrete coupling operators, the choice of finite element space for the structural velocity and deformation, and time integration errors.
Using a penalty function valid in the solid region and material models with modified tensor invariants substantially decreases errors in solid incompressibility in the discretized IFED equations~\cite{Vadala-Roth2020}.
Section 2.4 of Vadala-Roth \etal~\cite{Vadala-Roth2020} defines the modified material models used in the benchmarks below.
These modified material models lead to Cauchy stresses of the form
\begin{equation}
    \cauchy = \cauchyv - p \mathbb{I} +
    \begin{cases}
        \ztensor & \xb \in \fluiddom, \\
        \dfrac{1}{J} \dfrac{\partial \Psi(\FFb)}{\partial \FF} \FF^T - \pstab\mathbb{I} & \xb \in \soliddom,
    \end{cases}
    \label{dev-stab}
\end{equation}
in which $\FFb = J^{-1/3} \FF$, $\pstab$ is an additional isotropic (pressure-like) stress that penalizes dilatational deformations, and $\cauchyv = \frac{\mu}{2}\left(\nabla \ub + \nabla \ub^T \right)$ is the viscous stress.
We choose a stabilizing pressure such that $\pstab = 0$ when $J = 1$.
This ensures that the penalty stress has no effect in the continuous formulation, in which incompressibility ($J=1$) is exactly maintained.
For our benchmarks, we use $\pstab = -\frac{\kappas}{J}\ln J$, in which $\kappas$ is the \emph{numerical bulk modulus} determined from other material parameters.
Note that the values for $\kappas$ are particularly low if compared to other penalty methods such as Reese \etal~\cite{Reese1999} because the structure ``inherits'' some incompressibility from the Cartesian discretization used to approximate the Eulerian velocity that is projected to obtain the Lagrangian velocity; see Table~\ref{tb:cb-param}.

\section{Benchmarks}
\label{sec:benchmarks}
This section examines the effect of mass lumping and nodal quadrature rules on the IFED method using a series of benchmarks.
In the all of the benchmarks, we examine two versions of the IFED method: the original method as proposed by Griffith and Luo~\cite{Griffith2017} that uses adaptive quadrature to define the discrete coupling operators and the new nodally coupled method presented in this study.
Recall that Theorem~\ref{thm:equal-forces} tells us that using nodal quadrature only for the projection operators or only for the coupling operators is not guaranteed to maintain discrete zeroth and first order force moments and may possibly generate spurious momentum changes.
We remark that different time-integration schemes may be used with the coupling schemes detailed in Algorithms~\ref{alg:nodal-algorithm} and \ref{alg:elemental-algorithm}.
All benchmarks herein use an explicit midpoint method described by Griffith and Luo~\cite{Griffith2017} for the Eulerian-Lagrangian coupling and a Crank-Nicolson Adams-Bashforth 2 method for the incompressible Navier-Stokes equations.
Preliminary results indicated that using nodal quadrature for the right hand side of the force calculation with nodal coupling made little difference in the results; in other words, using $\mathbb{N}_q$ in Equation~\eqref{eq:inconsistent-div-pk1-projection-load-vector} to achieve a consistently lumped projection had no noticeable effect.
Additionally, Gauss quadrature generally uses fewer integration points than the corresponding equal-order nodal rules, providing for a more economical method.
However, exploration of the quadrature rules used for the force calculation is a possible area for future research.

All benchmarks, unless otherwise noted, use the three-point B-spline kernel.
See Lee \etal~\cite{Lee2021} for a discussion on the relative efficacy of different kernels.

In general, nodal coupling requires about $5-10$ times fewer interaction points than elemental coupling.
The exact number is highly dependent on the mesh topology.
For example, nodes tend to be shared among more elements in three spatial dimensions than in two dimensions, which can lead to a more substantial reduction in the number of interaction points for three-dimensional models.
Because the computational effort for the coupling operators is proportional to the number of interaction points, nodal coupling is about $5-10$ times less expensive than elemental coupling.
We report interaction point counts for the bioprosthetic heart valve benchmark in Table~\ref{tb:bhv-ib-points}.
In addition, nodal interaction completely avoids solving nontrivial linear systems in force spreading and velocity interpolation.
The percentage of total solver time used by the coupling operators is highly dependent on the ratio of structural mesh elements to Cartesian grid cells, performance of the fluid solver, and material model complexity.
Elemental coupling typically requires about 10--50\% of total solver time, with relatively thin structures requiring less time and relatively thick structures requiring more.

\begin{algorithm}
  \caption{Elemental Coupling Scheme}
  \label{alg:elemental-algorithm}
  \begin{algorithmic}[1]
    \State Define the adaptive quadrature rule as a composite Gauss rule such that
    \begin{equation}
      \mathbb{A}_q = \{(\Xb_q, w_q)\} \text{ in which }
      \forall \Xb \in \soliddomO,
      \text{ } \min_{q} \|\Chib_h(\Xb) - \Chib_h(\Xb_q)\| \leq C_{\mathbb{A}} \euleriandx
    \end{equation}
    in which typically $C_\mathbb{A} = 1/2$

    \Procedure{Velocity Interpolation}{$\Chib_h, \ub_h$, $\delta_h$}\Comment{Interpolate and project $\ub_h$ in the FE space}

    \State Compute the components of $\Ub^{\text{IB}}$, for each $(\Xb_q, w_q) \in \mathbb{A}_q$, as
    \begin{subequations}
      \begin{align}
        U^{\text{IB},1}(\Xb_q) &= \sum_{i,j,k} u^1_{\xface}
        \, \delta_h(\xb_{\xface} - \Chib_h(\Xb_q)) \euleriandx^3
        \\
        U^{\text{IB},2}(\Xb_q) &= \sum_{i,j,k} u^2_{\yface}
        \, \delta_h(\xb_{\yface} - \Chib_h(\Xb_q)) \euleriandx^3
        \\
        U^{\text{IB},3}(\Xb_q) &= \sum_{i,j,k} u^3_{\zface}
        \, \delta_h(\xb_{\zface} - \Chib_h(\Xb_q)) \euleriandx^3
      \end{align}
    \end{subequations}

    \State Set up the projection right-hand side $\LbVecIB$ with entries
    \begin{equation}
      L^{\text{IB}}_i = \sum_{(\Xb_q, w_q) \in \mathbb{A}_q} \Ub^{\text{IB}}(\Xb_q) \cdot \phib_i(\Xb_q) w_q
    \end{equation}

    \State Solve $\mathbb{M} \vec{\Ub} = \LbVecIB$ for $\vec{\Ub}$\Comment{$\mathbb{M}$ is the standard mass matrix}

    \State \textbf{return} $\vec{\Ub}$\Comment{vector of FE velocity coefficients}
    \EndProcedure%

    \Procedure{Force Spreading}{$\Chib_h, \delta_h$}
    \State Calculate $\vec{\Lb}$ with
    \begin{equation}
      L_i = \underset{(\Xb_q, w_q) \in \mathbb{H}_q}{-\sum} \PPs(\Xb_q, t)
      : \nabla_{\Xb} \phib_i(\Xb_q) w_q
    \end{equation}

    \State Solve $\mathbb{M} \vec{\Fb} = \vec{\Lb}$ for $\vec{\Fb}$\Comment{$\mathbb{M}$ is the standard mass matrix}

    \State Define $\Fb_h(\Xb) = \sum_i F_i \phib_i(\Xb)$

    \State Spread $\Fb_h(\Xb)$ with into $\fb_h$ with
    \begin{subequations}
      \begin{align}
        f^1_{\xface} = \sum_{(\Xb_q, w_q) \in \mathbb{A}_q} F^1(\Xb_{q})
        \, \delta_h(\xb_{\xface} - \Chib_h(\Xb_{q}))w_{q}
        \\
        f^2_{\yface} = \sum_{(\Xb_q, w_q) \in \mathbb{A}_q} F^2(\Xb_{q})
        \, \delta_h(\xb_{\yface} - \Chib_h(\Xb_{q}))w_{q}
        \\
        f^3_{\zface} = \sum_{(\Xb_q, w_q) \in \mathbb{A}_q} F^3(\Xb_{q})
        \, \delta_h(\xb_{\zface} - \Chib_h(\Xb_{q}))w_{q}
      \end{align}
    \end{subequations}

    \State \textbf{return} $\fb_h$\Comment{Cartesian grid force representation}
    \EndProcedure%
  \end{algorithmic}
\end{algorithm}

\subsection{Effects of $\mfac$}
\label{subsec:mfac}
Elemental coupling permits the use of relatively coarse structural meshes while preventing gaps in the Cartesian grid representation of the force density by evaluating the regularized delta function at adaptively determined interaction points, like those specified by Equation~\eqref{eq:adaptive-quadrature}.
Lee and Griffith~\cite{Lee2021} explore the effect of varying the relative structural mesh sizes (or $\mfac$) in a range of test cases, which can be classified into shear-dominant flow (considered here in Section~\ref{subsubsec:channel_flow}) and fluid pressure-loaded (considered here in Section~\ref{subsubsec:elastic_band}) cases.
The results from these studies suggest that the accuracy is improved for shear-dominant cases when using relatively coarser structural meshes, whereas for pressure-loaded cases the structural mesh needs to be as fine or finer than the Cartesian grid ($\mfac \leq 1$) to prevent the aforementioned gaps.
This section investigates the effect of $\mfac$ on the accuracy of our solutions, using nodal coupling, to benchmark cases adopted from the study by Lee and Griffith~\cite{Lee2021}.
We then compare these results to results obtained using elemental coupling.

Because we use the scalar $\mfac$ to control the relative size of elements compared to the background grid, it typically does not capture any information on the aspect ratios of different elements in the structural mesh.
Just as the FE method may suffer from poor accuracy in bending cases when there is a large aspect ratio, we have observed the IFED method to suffer from similar issues.
Using second-order elements often helps to ameliorate this issue, as demonstrated in the study by Lee~\etal~\cite{Lee2020}.
The thesis work of Vadala-Roth~\cite{Vadala-RothThesis} describes an improvement to the IFED method for first order quadrilateral elements with large aspect ratios in cases with bending.

Each benchmark uses the largest possible stable time step.
Although we have observed some time integration instabilities when we are outside the range of acceptable $\mfac$ values for a given simulation, we have not systematically explored this relationship.
Further, we have not observed a noticeable relationship between the choice of coupling scheme and the stability of the IFED methodology.

\subsubsection{Two-dimensional Channel Flow}
\label{subsubsec:channel_flow}
This benchmark examines flow in a channel that is not aligned with the coordinate axes.
Lee and Griffith~\cite{Lee2021} show that the accuracy, with elemental coupling, is improved for this shear-dominant case when using relatively coarser structural meshes.
The computational domain for this nondimensionalized benchmark is $[0,L]\times[0,L]$, with $L = 6$.
The Cartesian grid uses $N = 128$ cells in each coordinate direction on the coarsest level and three refinement levels with a refinement ratio of $2$.
The time step size is $5.85 \cdot 10^{-4}$, which is near the largest stable time step for this problem.
The structures are two parallel plates of width $D=1$ and wall thickness $w = 0.24$ (see Figure~\ref{fig:channel_error}a).
At the inlet and outlet of the channel, we impose velocity boundary conditions using the analytical steady-state velocity solution described by the plane Poiseuille equation rotated by an angle $\theta$, $u(y) = \frac{\chi D}{2\mu}\eta\left(1-\frac{\eta}{D}\right)$, in which $\eta = -x\sin\theta+(y-y_0)\cos(\theta)$, $y_0$ is the height of the inner wall of the lower plate, and $\chi=\frac{2p_0}{L/\cos(\theta)+D\tan(\theta)}$ is the pressure gradient between the inlet and the outlet.
Our simulation results are compared to this analytical solution with the parameters listed in Table~\ref{tb:channel-flow}.

The channel walls are discretized using $\Pone$ elements.
Figure~\ref{fig:channel_error}b compares results from Lee and Griffith~\cite{Lee2021} with our nodally coupled results.
Our results indicate that accuracy is still improved in the nodal scheme by using relatively coarser structural meshes up to $\mfac \approx 2$.
For structural meshes that are even coarser, substantial spurious flows begin to appear with nodal coupling. This is to be expected, since the support of the regularized delta function is fixed, and eventually there must be gaps if the structural mesh nodes are sufficiently far apart compared to the background Cartesian grid.

\begin{figure}
\centering
\setlength\tabcolsep{2pt}
\begin{tabular}{c c l c}
\includegraphics[width=.3\linewidth]{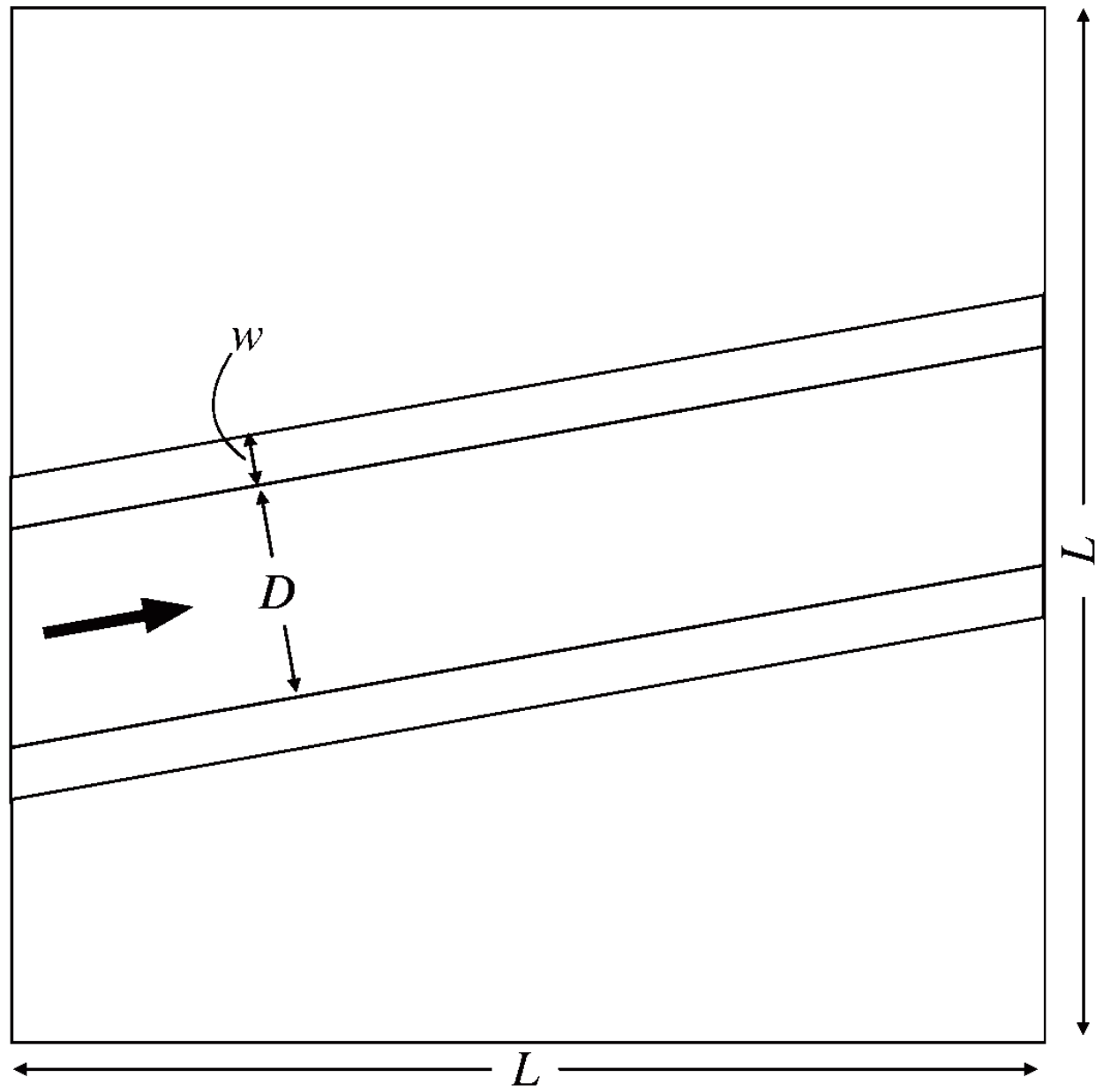} & $\quad\quad\quad$&
\rotatebox{90}{$\quad\quad\quad\quad\quad \text{error}$} &
\includegraphics[width=.35\linewidth]{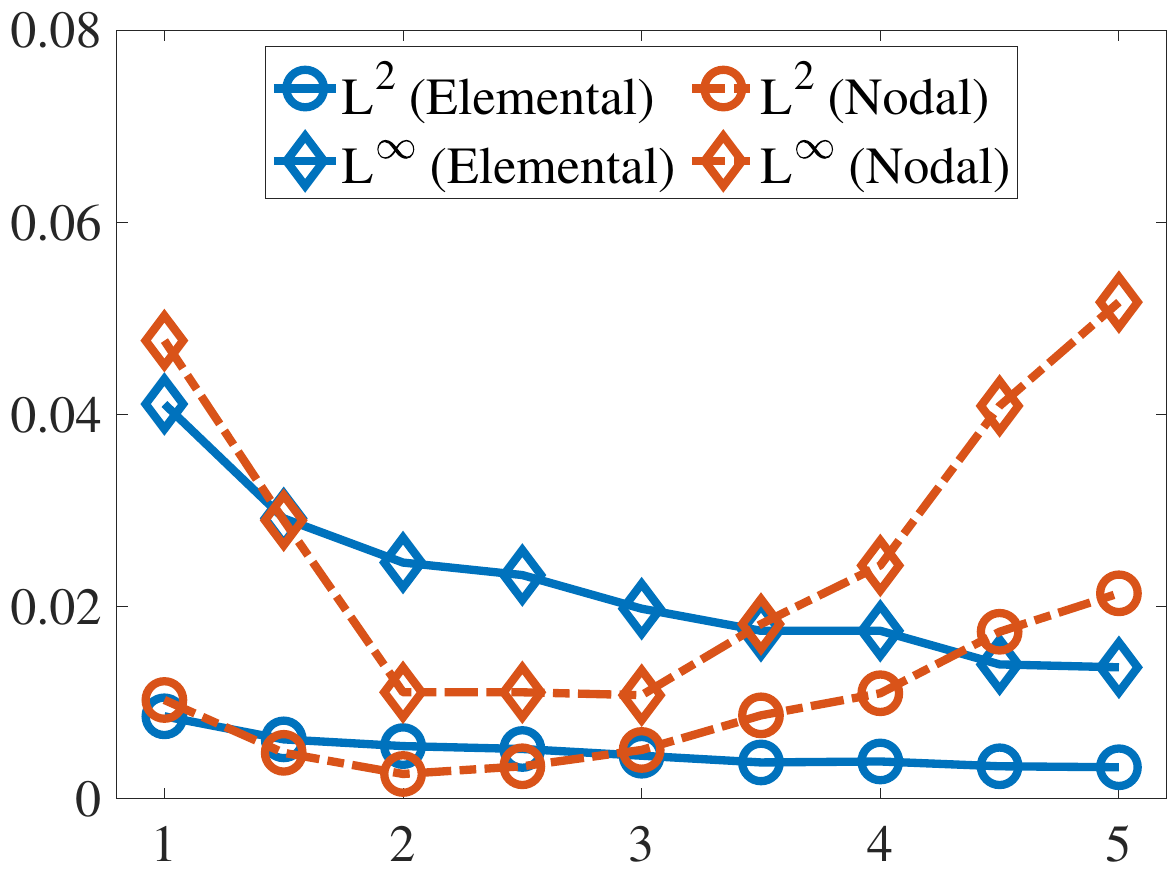}\\
& & & $\mfac$\\
(a) & & & (b)
\end{tabular}
\caption{(a) Schematic of two-dimensional flow through a slanted channel adopted from Lee and Griffith~\cite{Lee2021}.
(b) Comparison of the error norms in velocity for values of $\mfac = 1, 1.5, 2, 2.5, 3, 3.5, 4, 4.5,$ and $5$ at $N = 128$ between elemental and nodal coupling.
The results indicate that for nodal coupling, we are still able to use a
relatively coarser structural mesh ($\mfac\approx2$) but start to lose accuracy once the structure becomes too coarse ($\mfac \ge 3.5$) unlike the increased accuracy for elemental coupling.}
\label{fig:channel_error}
\end{figure}

\begin{table}
\centering
\begin{tabular}{| c | c | c | }
\hline
Density & $\rho$ & $1.0$ \\
\hline
Viscosity & $\mu$ & $0.01$ \\
\hline
Material model & - & rigid plate \\
\hline
Pressure gradient constant & $p_0$ & $0.2$ \\
\hline
Plate angle & $\theta$ & $\pi/18$ \\
\hline
\end{tabular}
\caption{Parameters for the channel flow benchmark (Section~\ref{subsubsec:channel_flow}).}
\label{tb:channel-flow}
\end{table}

\subsubsection{Two-dimensional Pressure-loaded Elastic Band}
\label{subsubsec:elastic_band}
In this benchmark, an elastic band deforms and ultimately reaches a steady-state configuration determined by the pressure difference across the band.
The steady-state fluid velocity should be zero.
The computational domain for this nondimensional benchmark is $2L \times L$, with $L = 1$.
The time step size is $\Delta t = 10^{-3} \euleriandx$ and is systematically reduced as needed to maintain stability throughout the simulation.
The Cartesian grid for this benchmark uses $N = 128$ cells in each coordinate direction on a single level.
The structure is an elastic beam with an incompressible neo-Hookean material model.
We impose fluid traction boundary conditions $\bbsigma (\xb, t)\nb(\xb) = -\boldsymbol{h}$ and $\bbsigma (\xb, t)\nb(\xb) = \boldsymbol{h}$ on the left and right boundaries of the computational domain, respectively, in which $\boldsymbol{h}=(5,0)^T$, and zero velocity is enforced along the top and bottom boundaries.
Figure~\ref{fig:elastic_band_error}a depicts a schematic of this benchmark and Table~\ref{tb:elastic_band} lists its parameters (see also Table~\ref{tbl:elastic_band-ib-points} for the number of elements and interaction points for the elastic band benchmark).
Here the elastic band was discretized with $\Ptwo$ elements.
Figure~\ref{fig:elastic_band_error}b shows comparison of the error norms in velocity for values of $\mfac = 0.5, 0.75, 1.0, 2.0,$ and $4.0$ between elementally coupled simulations obtained by Lee and Griffith~\cite{Lee2021} and nodal coupling.
Our results indicate, for both methods, that there is loss of accuracy with relatively coarser meshes when the structure is loaded by fluid pressure.
In particular, the nodally coupled method shows a significant increase in the errors when $\mfac > 1$.
For $\mfac \le 1$, the errors are comparable between different cases.
This suggests that for pressure-loaded cases, the structural mesh needs to be as fine or finer (i.e., $\mfac \leq 1$) than the Cartesian grid for both nodal and elemental coupling schemes.

\begin{figure}
\centering
\setlength\tabcolsep{2pt}
\begin{tabular}{c c l c}
\includegraphics[width=.55\linewidth]{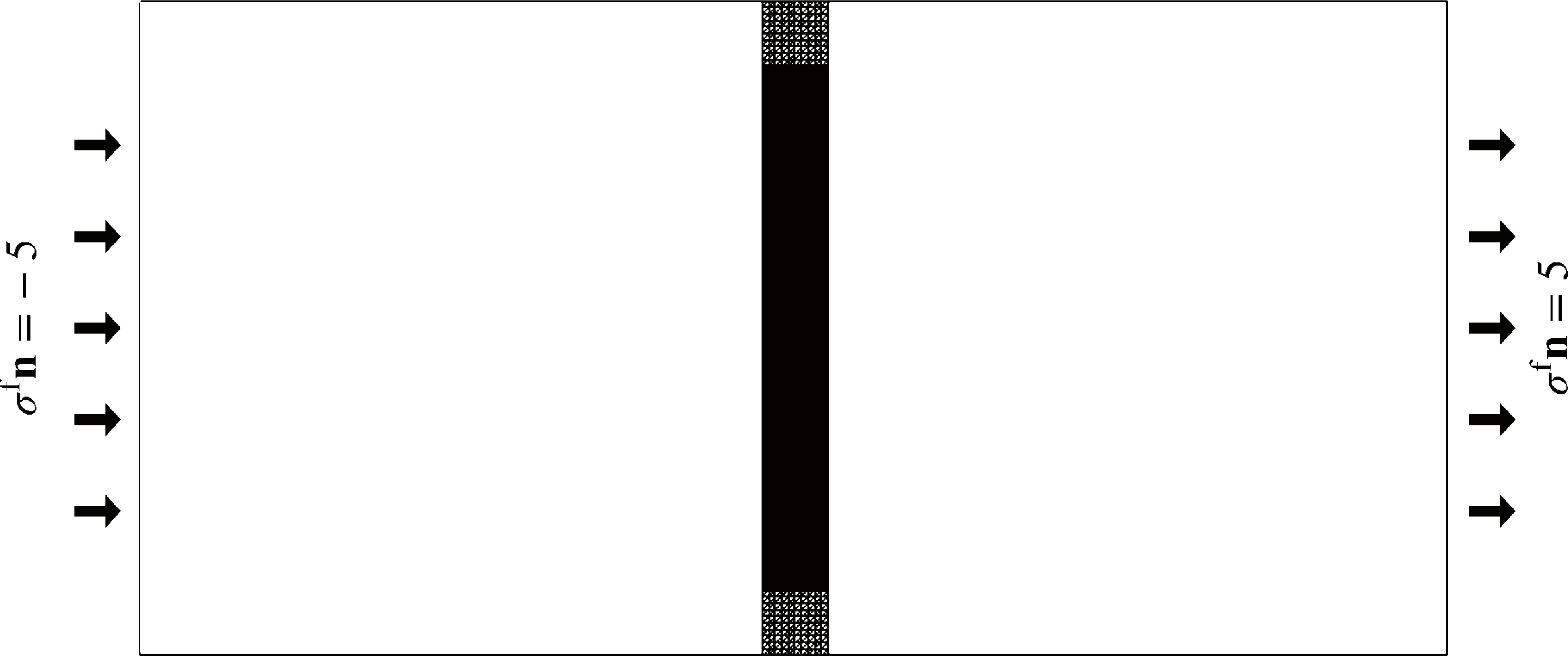} & $\quad\quad$&
\rotatebox{90}{$\quad\quad\quad\quad \log{(\text{error})}$} &
\includegraphics[width=.35\linewidth]{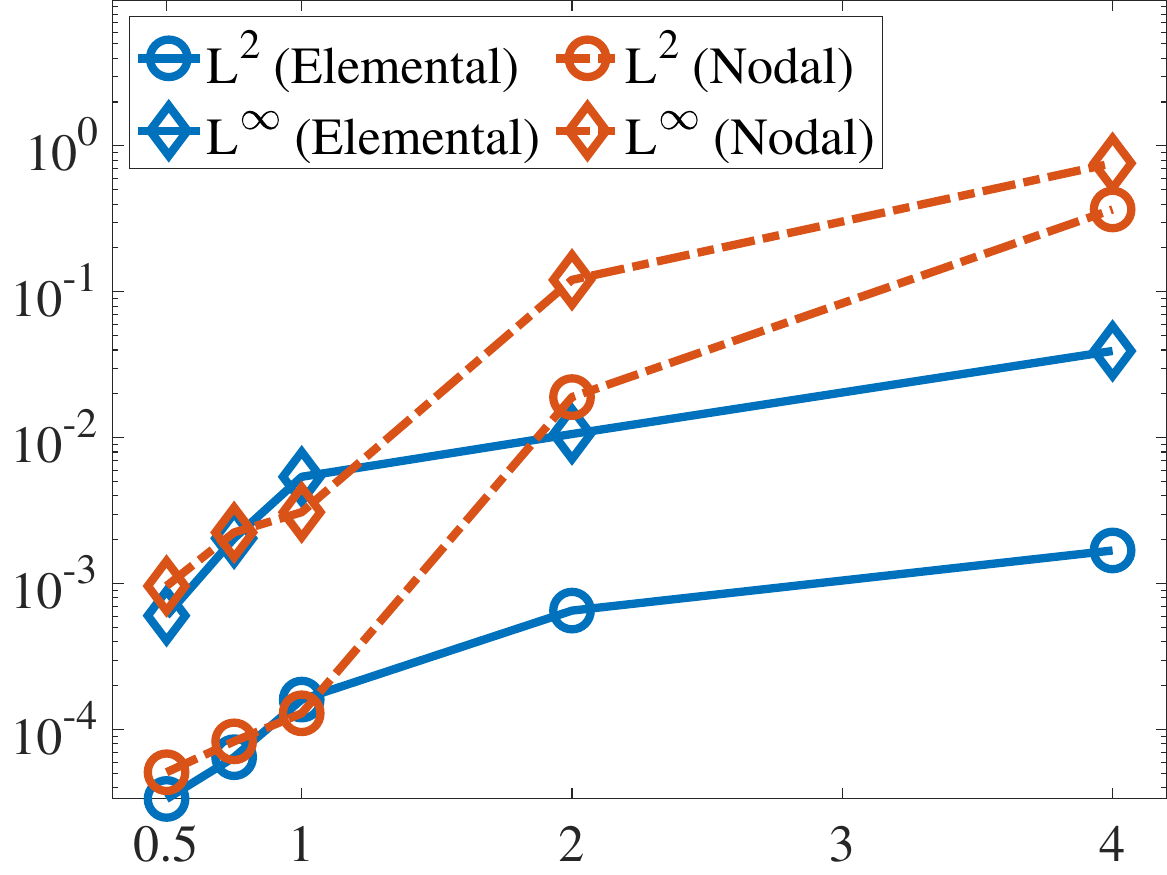}\\
& & & $\mfac$\\
(a) & & & (b)
\end{tabular}
\caption{(a) Schematic of two-dimensional pressure-loaded elastic band adopted from Lee and Griffith~\cite{Lee2021}.
The elastic band experiences a pressure load induced by the pressure gradient between the left and right boundaries of the computational domain.
(b) Comparison of the error norms in velocity for values of $\mfac = 0.5, 0.75, 1, 2,$ and $4$ at $N = 128$ between elemental and nodal coupling.
The results indicate that we obtain accuracy comparable to the elemental scheme for nodal coupling up to $\mfac = 1$, but we see significant loss of accuracy when we use a relatively coarser structural mesh ($\mfac \ge 1$).}
\label{fig:elastic_band_error}
\end{figure}

\begin{table}
\centering
\begin{tabular}{| c | c | c | }
\hline
Density & $\rho$ & $1.0$ \\
\hline
Viscosity & $\mu$ & $0.01$ \\
\hline
Material model & - & incompressible neo-Hookean \\
\hline
\end{tabular}
\caption{Parameters for the pressure-loaded elastic band benchmark (Section~\ref{subsubsec:elastic_band}).}
\label{tb:elastic_band}
\end{table}


\begin{table}
\centering
\begin{tabular}{| r | r | r | r | }
\hline
$\mfac$ & total elements & nodal interaction points & elemental interaction points\\
\hline
0.5     & 3692     & 7722                     & 25844 \\
\hline
0.75    & 2062     & 4376                     & 31066 \\
\hline
1.0     & 1488     & 3186                     & 29092 \\
\hline
2.0     & 908      & 1962                     & 32804 \\
\hline
4.0     & 736      & 1586                     & 28732 \\
\hline
\end{tabular}
\caption{Number of elements and interaction points for the elastic band benchmark (Section~\ref{subsubsec:elastic_band}).
For elemental coupling, using $\mathcal{A}_q$ ensures that the number of interaction points does not decrease as $\mfac$ increases.
The number of elements is not inversely proportional to $\mfac$ since the band mesh must be at least one element wide.
}
\label{tbl:elastic_band-ib-points}
\end{table}

\subsection{Static Benchmarks}
For quasi-static benchmarks, we immerse these structures in an incompressible fluid, apply loading forces, and allow the models to reach a steady state equilibrium.
This enables us to make direct comparisons to the results of an FE method for incompressible elastostatics~\cite{Chiumenti, Masud2013}.
These benchmarks use zero velocity boundary conditions on the computational domain, which ensures that the fluid-structure system reaches a quiescent steady state.

\subsubsection{Compression Test}
\label{subsec:compression-test}
This benchmark is a plane strain quasi-static problem involving a rectangular block with a downward traction applied in the center of the top side of the mesh and zero vertical displacement applied on the bottom boundary.
It was used by Reese \etal~\cite{Reese1999} to test a stabilization technique for low order finite elements.
The computational domain for this benchmark is $[0,L]\times[0,L]$, with $L = 40\ \text{cm}$.
The Cartesian grid uses a single refinement level with $N = \mathrm{ceil}\left(2 M \efac \mfac\right)$, in which $M$ is the number of elements per the longest edge in the Lagrangian mesh and $2$ is the ratio of $L$ to that longest edge.
The time step size is $\Delta t = 0.001 \euleriandx \ \text{s}$.
Figure~\ref{fig:comp-mesh} depicts the loading configuration and dimensions of the structure.
The structure uses a modified Neo-Hookean material model.
Table~\ref{tb:cb-param} lists the physical and numerical parameters used in this benchmark.

Zero horizontal displacement is also imposed along the top side.
All other boundary conditions are zero traction.
The primary quantity of interest is the $y$-displacement of the point at the center of the top face.
The penalty parameter to fix the bottom in place is $\kappa_{\text{S}} = 2.5 \cdot \frac{\Delta x}{\Delta t} \frac{\text{dyn}}{\text{cm}^3}$.
We gradually apply the traction to the solid boundary linearly in time so that the full load is applied at $T_{\text{l}} = 40.0$ s, and we wait until time $T_\text{f} = 100.0$ s for the structure to reach equilibrium.
The numbers of solid degrees of freedom (DoFs) range from $m = 15$ to $m = 4753$ for the FE ($\Pone$/$\Pone$) results and all the IFED results.
We explore the effect of using mesh factors of $\mfac = 0.5,$ $0.75,$ and $1.0$.

Figures~\ref{fig:cb_def} and \ref{fig:cb_disp_bs3} show representative steady state deformations and results for the displacement of the point of interest.
All cases appear to converge to the FE benchmark solution under grid refinement.
Both the nodally and elementally coupled methods perform well for the presented range of $\mfac$ values.
However, for coarse cases, the nodally coupled case over-shoots or under-shoots the FE numerical solution for larger values of $\mfac$, indicating that coarse discretizations may perform better with smaller relative grid spacings.
Figure~\ref{fig:cb_def} shows the deformations for both the elemental and nodal cases with $\Qone$ elements.
The displacements for both coupling strategies are qualitatively similar.

\begin{table}
\centering
\begin{tabular}{| c | c | c | c | }
\hline
Density & $\rho$ & $1.0$ & $\frac{\text{g}}{\text{cm}^3}$\\
\hline
Viscosity & $\mu$ & $0.16$ & $\frac{\text{dyn} \cdot \text{s}}{\text{cm}^2}$ \\
\hline
Material model & - & modified neo-Hookean & - \\
\hline
Shear modulus & $G$ & $80.194$ & $\frac{\text{dyn}}{\text{cm}^2}$  \\
\hline
Numerical bulk modulus & $\kappas$ & $374.239$ &
$\frac{\text{dyn}}{\text{cm}^2}$\\
\hline
Final time & $T_\text{f}$ & $100.0$  & s\\
\hline
Load time & $T_{\text{l}}$ & $40.0$ & s \\
\hline
\end{tabular}
\caption{Parameters for the compressed block benchmark (Section~\ref{subsec:compression-test}).}
\label{tb:cb-param}
\end{table}

\begin{figure}
\centering
\includegraphics[width=.7\linewidth, trim={80 0 70 0}, clip]{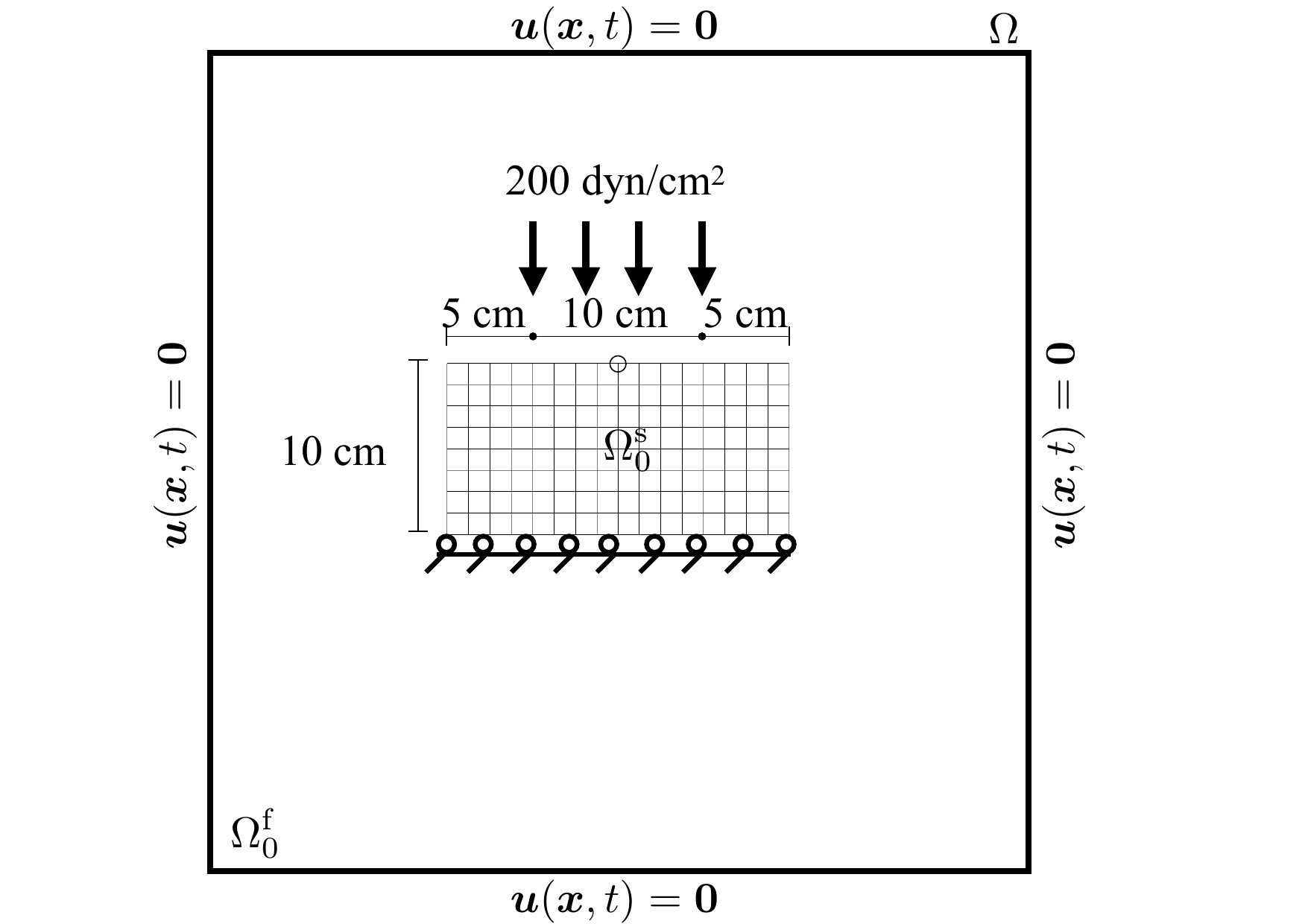}
\caption{
Specifications of the compressed block benchmark (Section~\ref{subsec:compression-test}).
The quantity of interest is the $y$-displacement measured at the encircled point.
The structure, shown here in its initial configuration and denoted by $\soliddomO$, is immersed in a fluid denoted by $\fluiddomO$.
The entire computational domain is $\Omega = \fluiddom \cup \soliddom$.
Zero fluid velocity is enforced on $\partial\Omega$.
}
\label{fig:comp-mesh}
\end{figure}

\begin{figure}
\begin{tabular}{l c c c}
&$t = 0\ \text{s}$ & $t = 20\ \text{s}$ & $t = 100\ \text{s}$  \\
\rotatebox{90}{$\quad$ Elemental}&
\includegraphics[width=.3\linewidth]{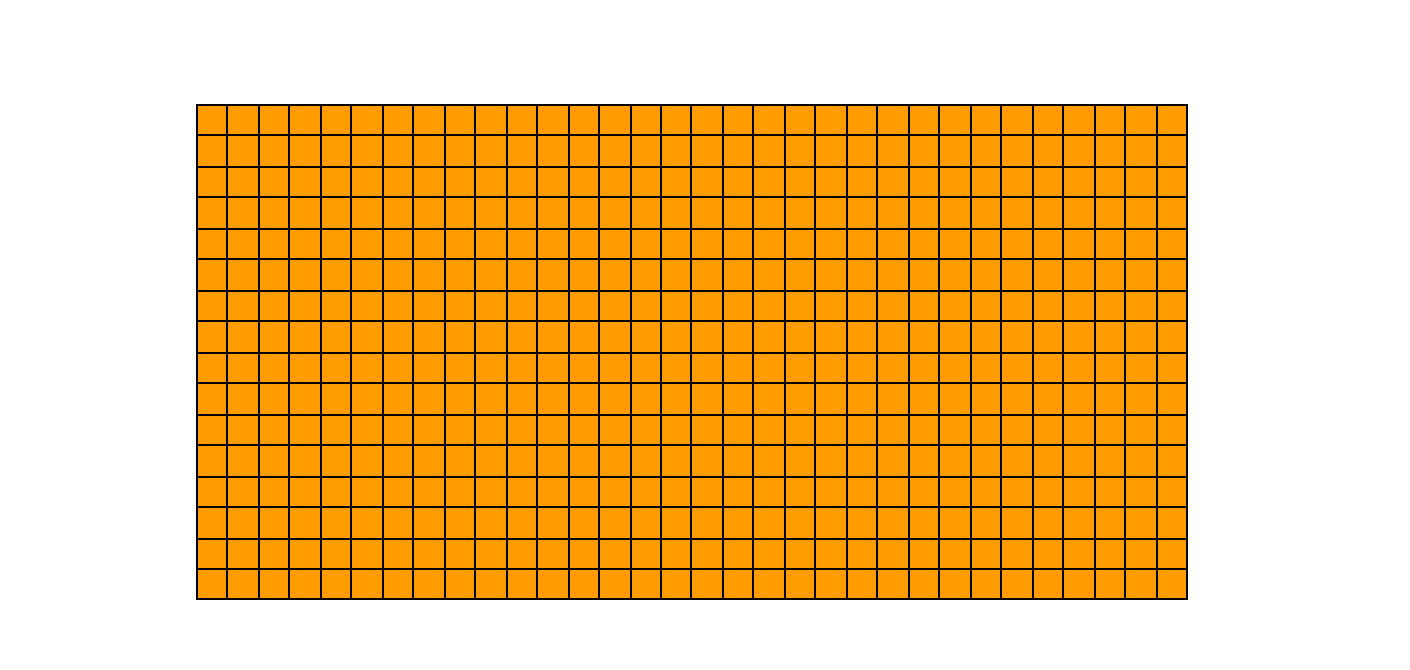}&
\includegraphics[width=.3\linewidth]{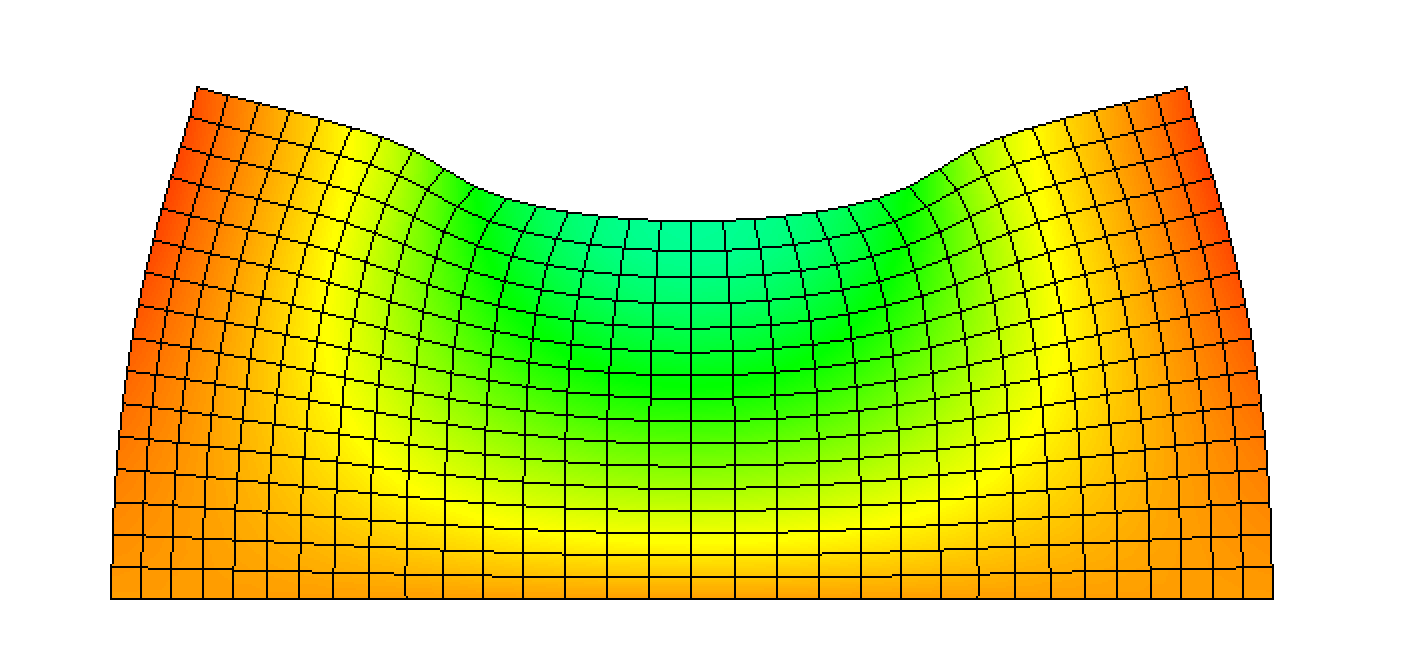}&
\includegraphics[width=.3\linewidth]{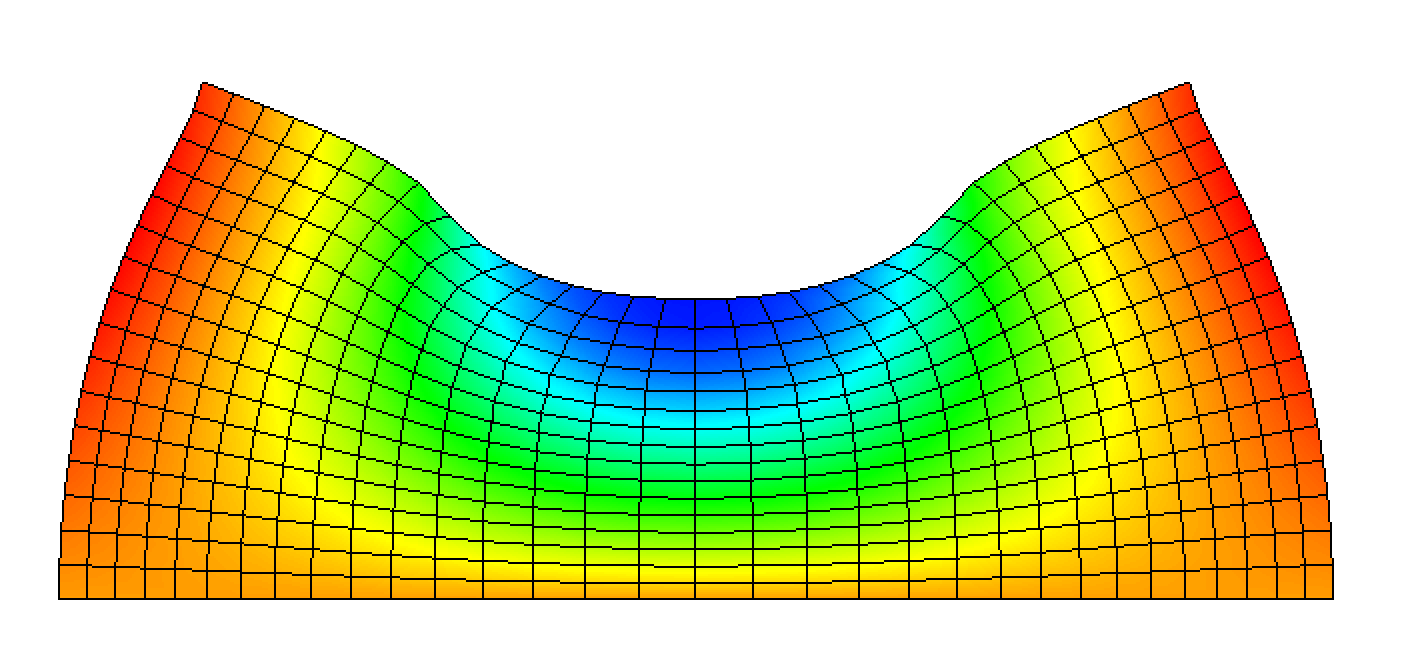}\\

\rotatebox{90}{$\quad\;$ Nodal}&
\includegraphics[width=.3\linewidth]{Figures/cb/cb_nodal_t=0.png}&
\includegraphics[width=.3\linewidth]{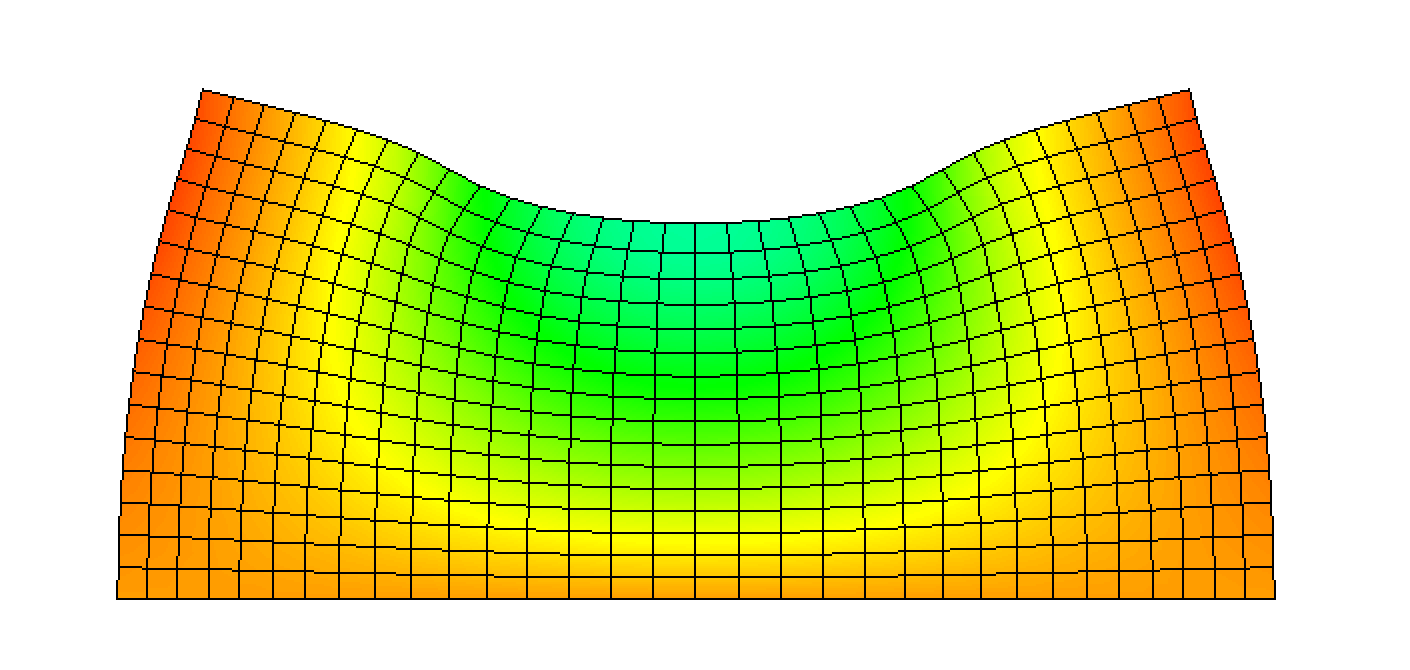}&
\includegraphics[width=.3\linewidth]{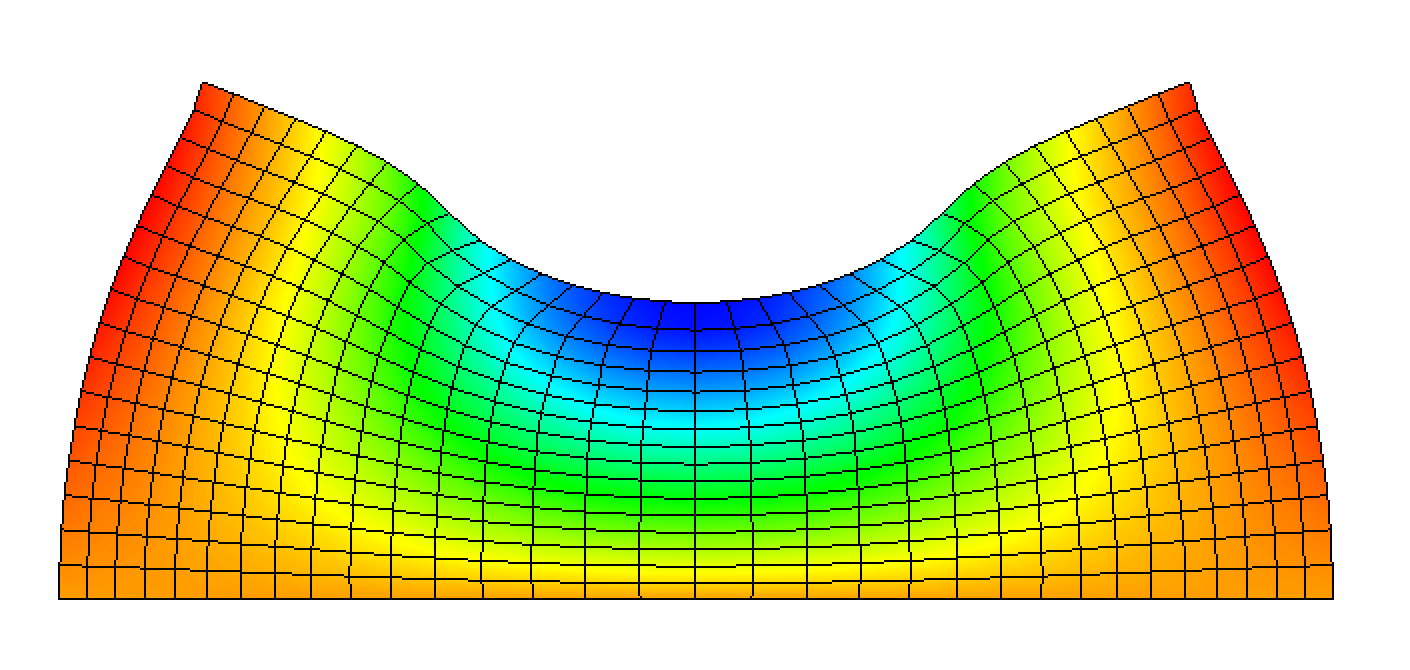}\\
& {} & $d^{\text{s}}_2$ &\\
& {} & \includegraphics[width=.3\linewidth]{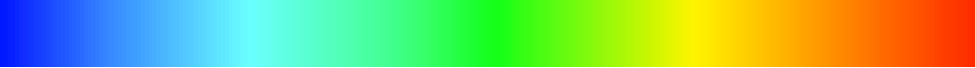}&\\
\end{tabular}
\begin{center}
\hspace{0.060\linewidth}$-4.0 \ \text{cm}$ \hspace{.215\linewidth} $0.75\ \text{cm}$
\end{center}
\caption{Deformations and $y$-displacements of the compressed block benchmark for both elemental and nodal coupling at different points in time.
  Time values are start of the simulation, $0.5T_\text{l}$, and $T_\text{f}$.
  In both cases, the structure is discretized with $\Qone$ elements.
  The nodal case uses $\mfac = 0.5$ and the elemental case uses $\mfac = 1.0$.}
\label{fig:cb_def}
\end{figure}

\begin{figure}
\begin{tabular}{l c c c c}
& $\Pone$ & $\Qone$ & $\Ptwo$ & $\Qtwo$ \\

\rotatebox{90}{$\quad\;\;\mfac = 0.5$}&
\includegraphics[width=.22\linewidth, trim={0 150 50 100}, clip]{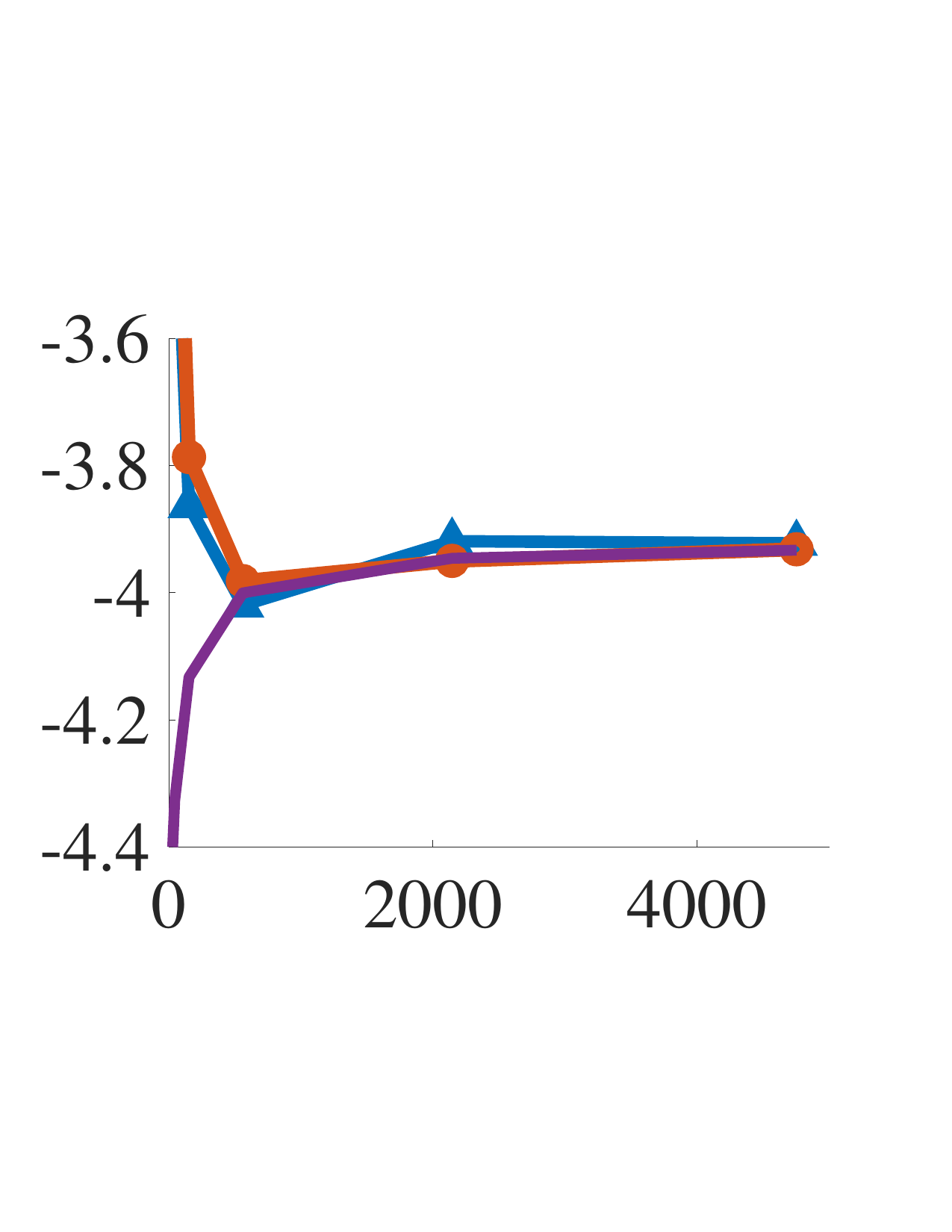}&
\includegraphics[width=.22\linewidth, trim={0 150 50 100}, clip]{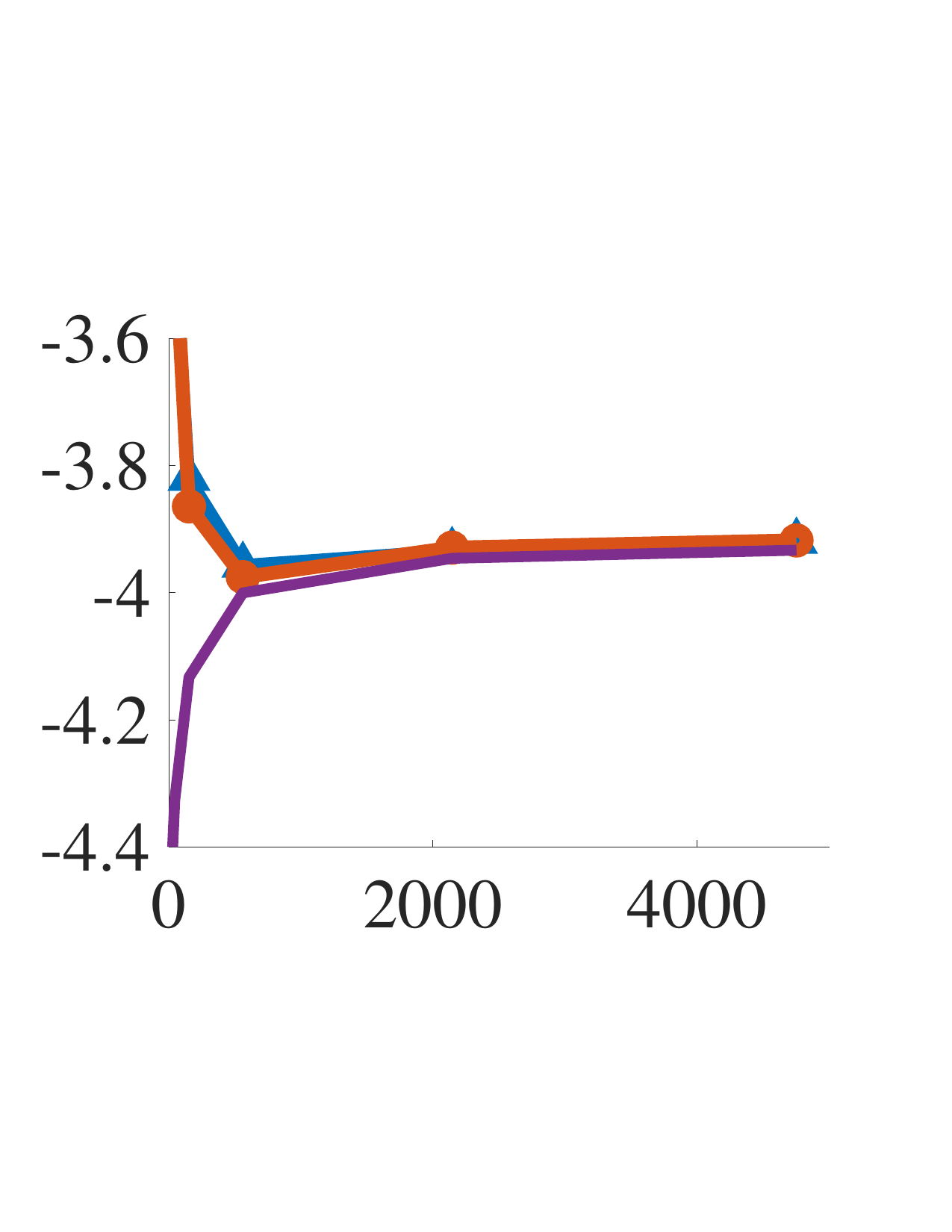}&
\includegraphics[width=.22\linewidth, trim={0 150 50 100}, clip]{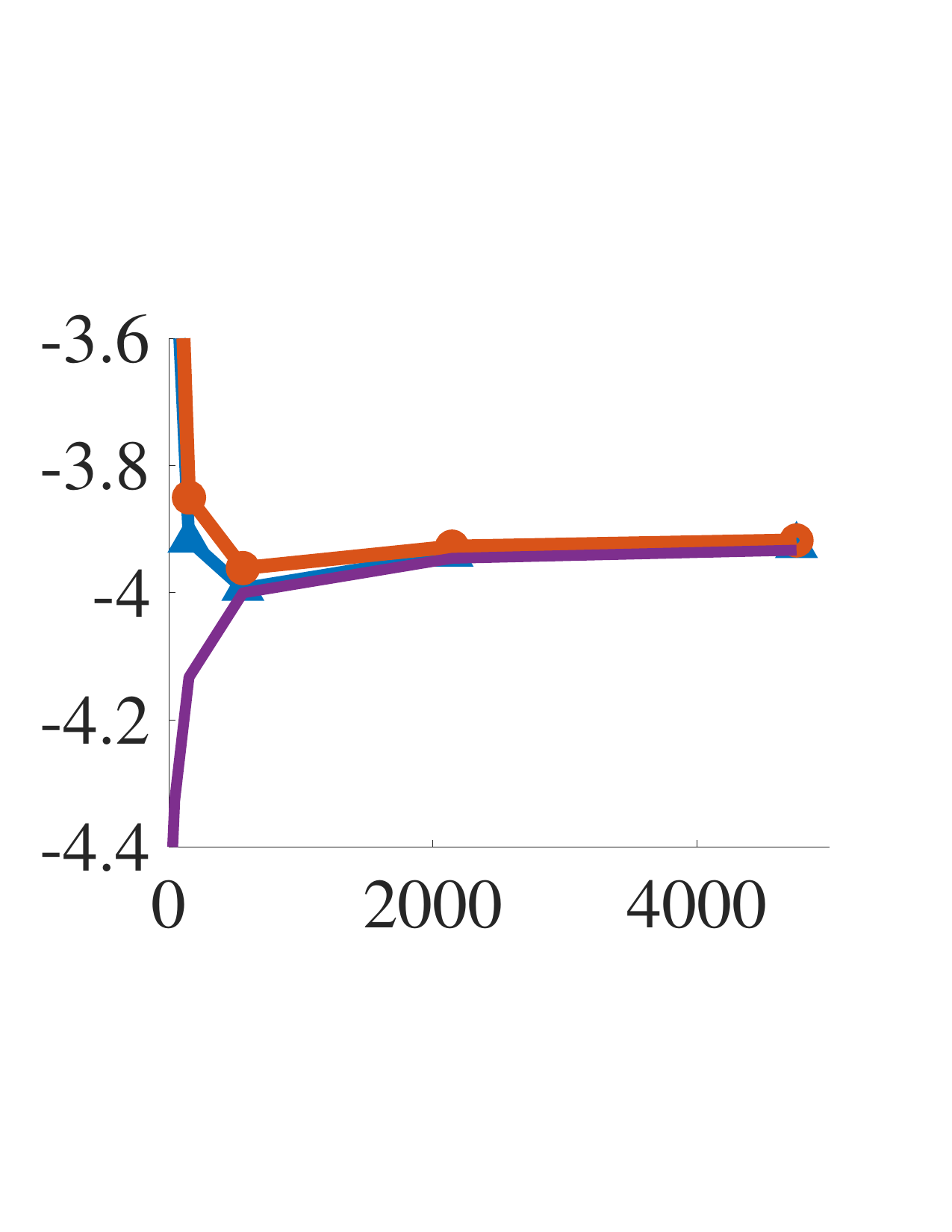}&
\includegraphics[width=.22\linewidth, trim={0 150 50 100}, clip]{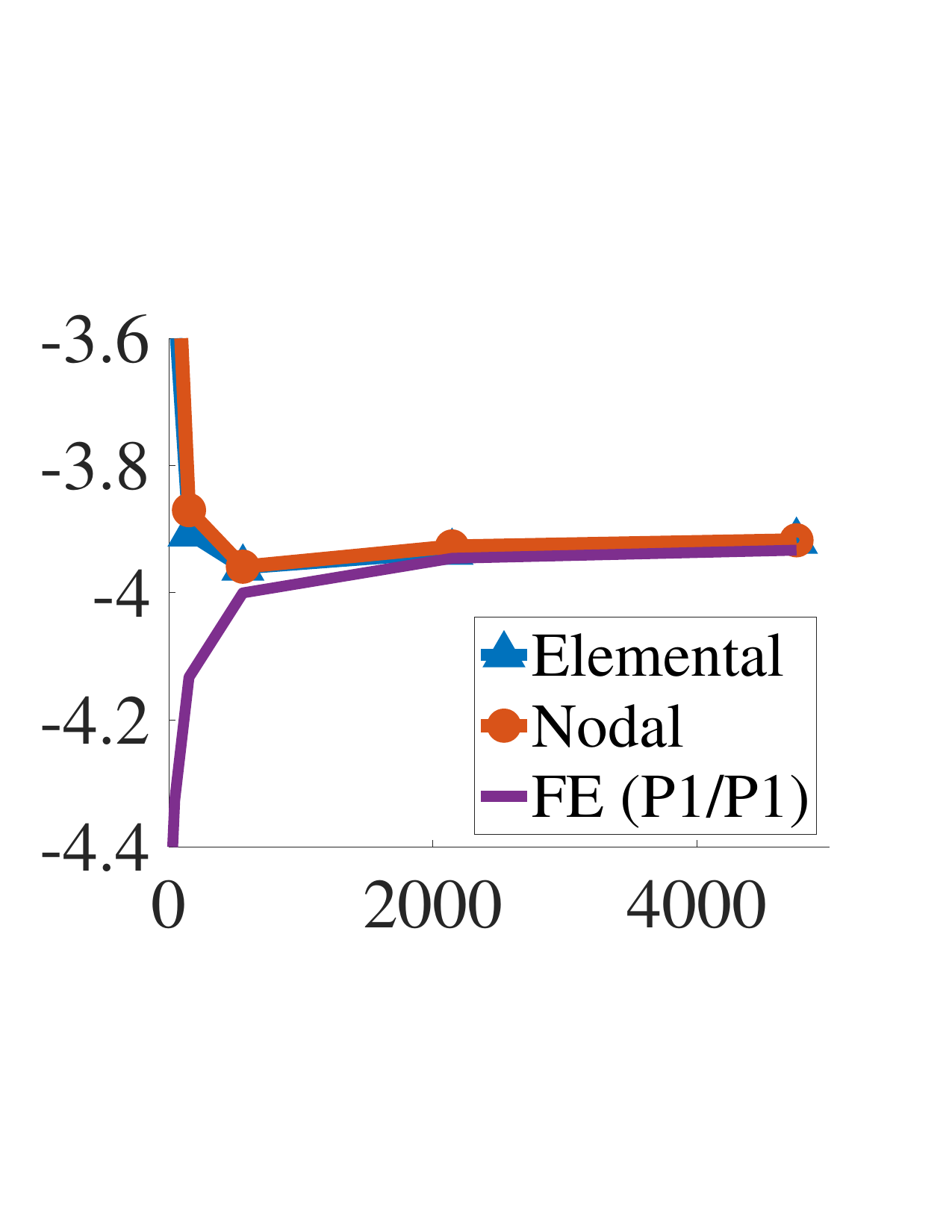}\\

\rotatebox{90}{$\quad\;\;\mfac = 0.75$}&
\includegraphics[width=.22\linewidth, trim={0 150 50 100}, clip]{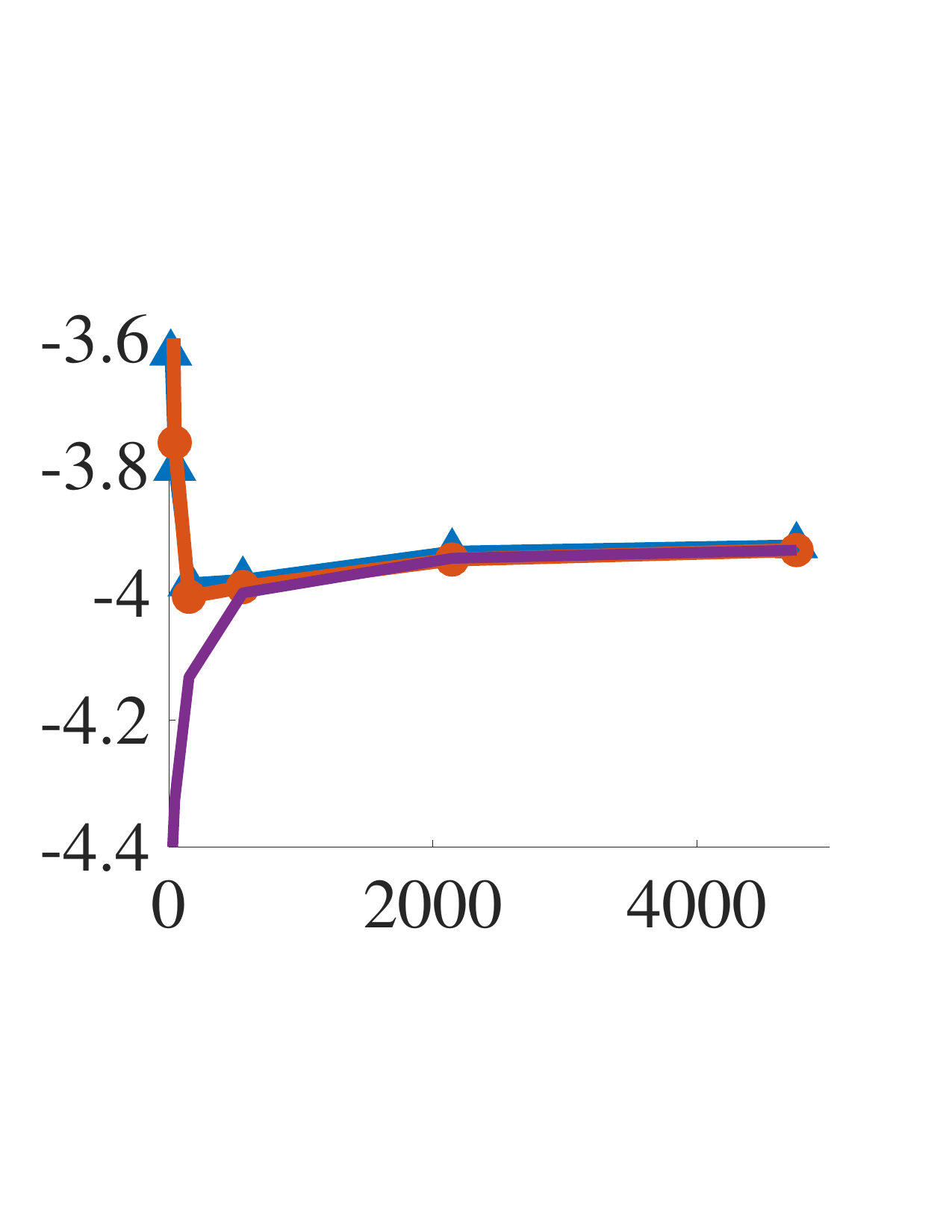}&
\includegraphics[width=.22\linewidth, trim={0 150 50 100}, clip]{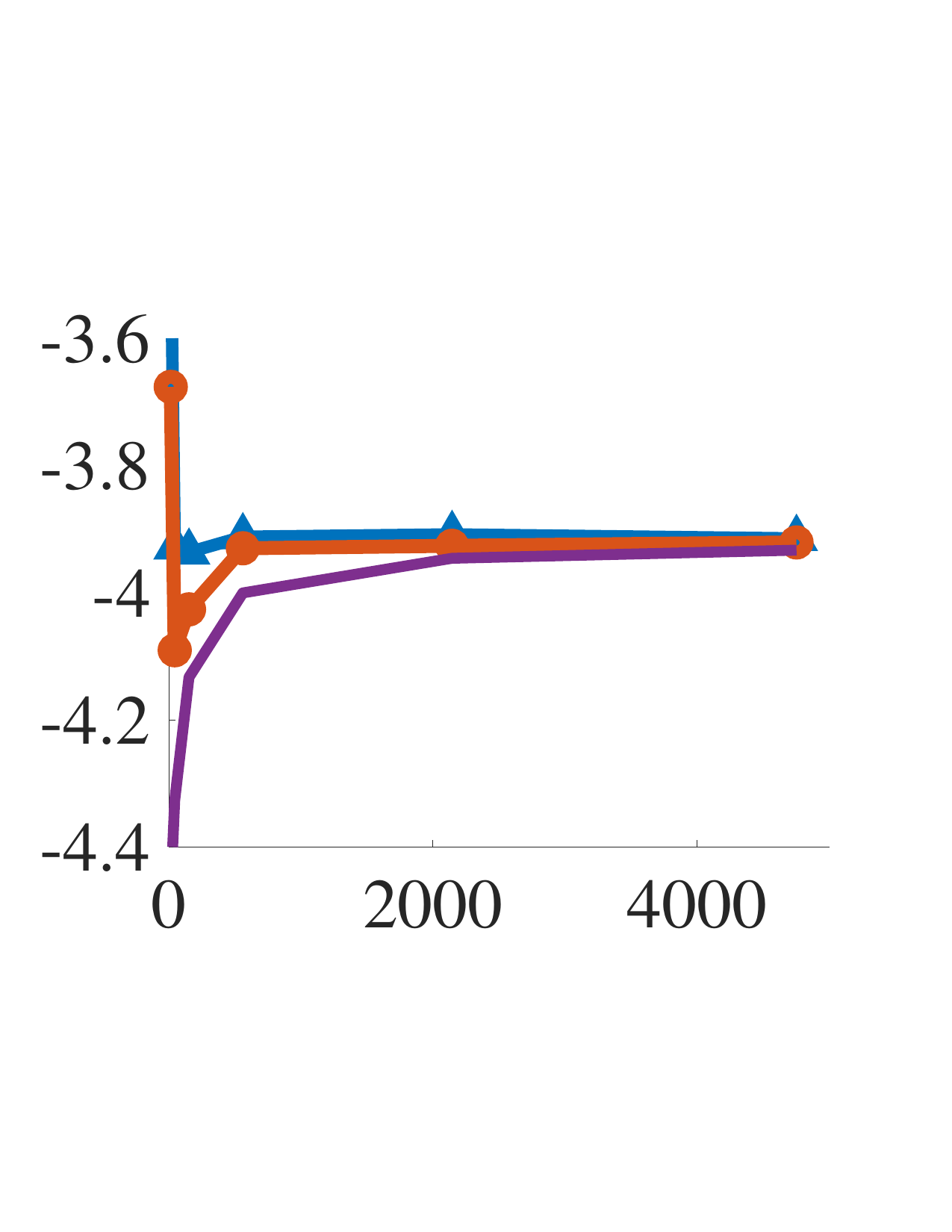}&
\includegraphics[width=.22\linewidth, trim={0 150 50 100}, clip]{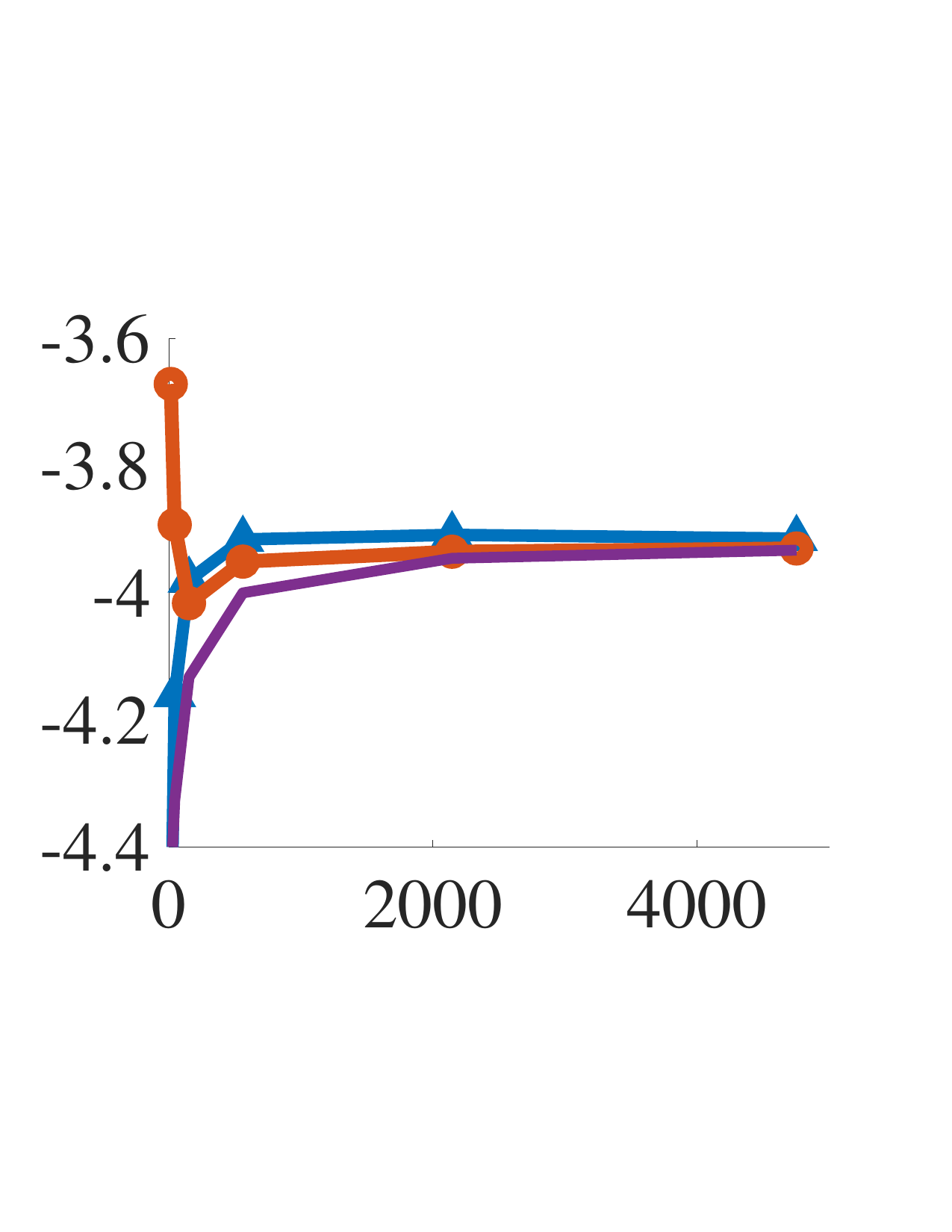}&
\includegraphics[width=.22\linewidth, trim={0 150 50 100}, clip]{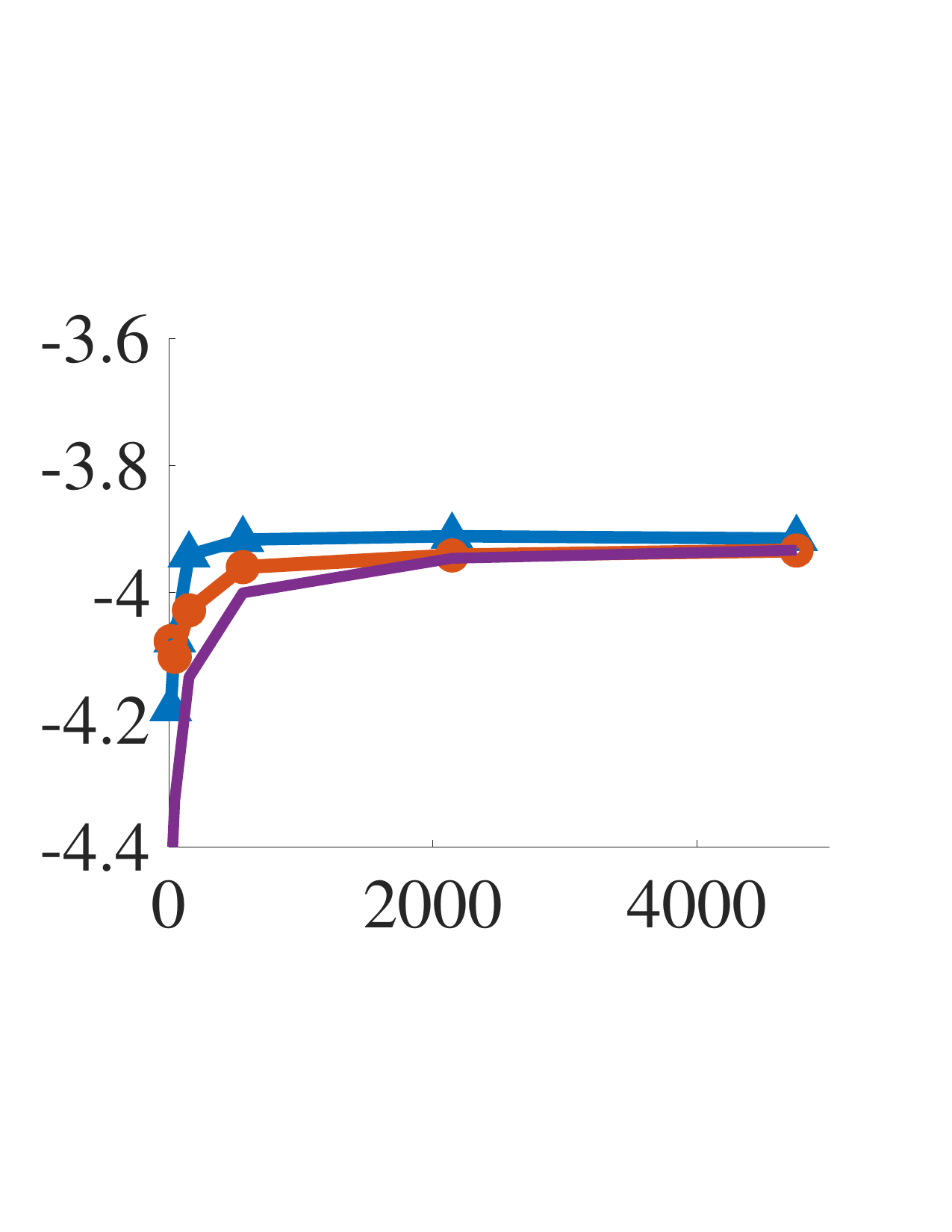}\\

\rotatebox{90}{$\quad\;\;\mfac = 1.0$}&
\includegraphics[width=.22\linewidth, trim={0 150 50 100}, clip]{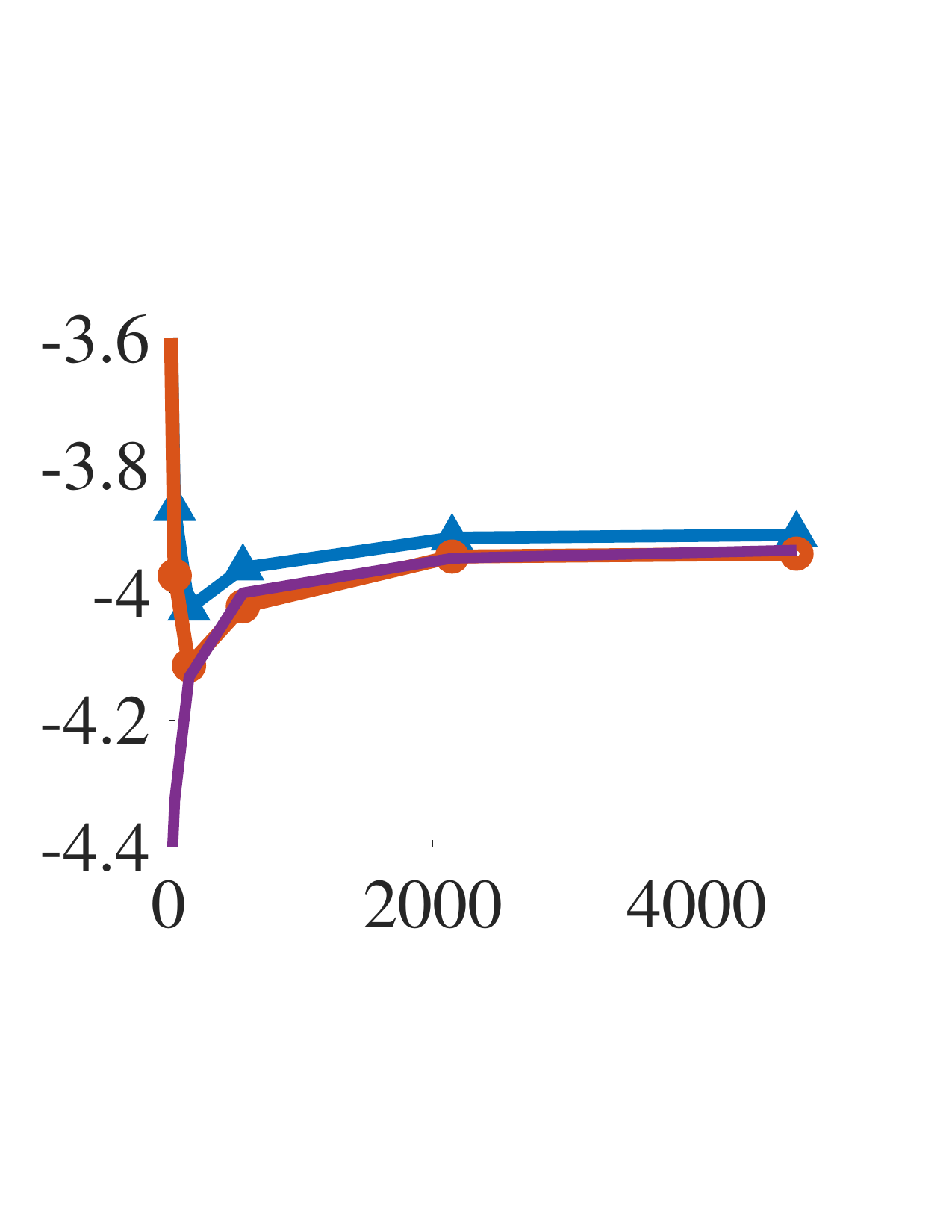}&
\includegraphics[width=.22\linewidth, trim={0 150 50 100}, clip]{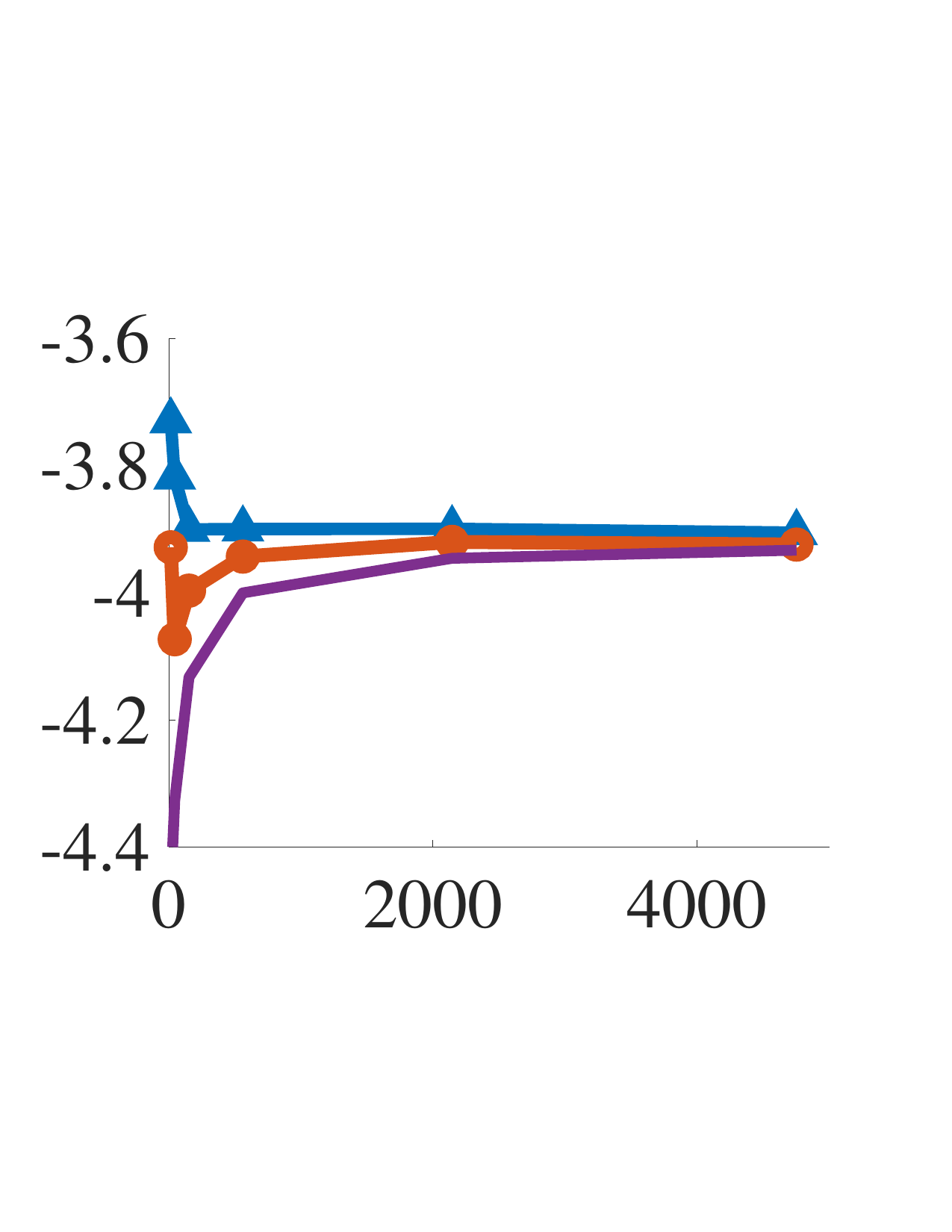}&
\includegraphics[width=.22\linewidth, trim={0 150 50 100}, clip]{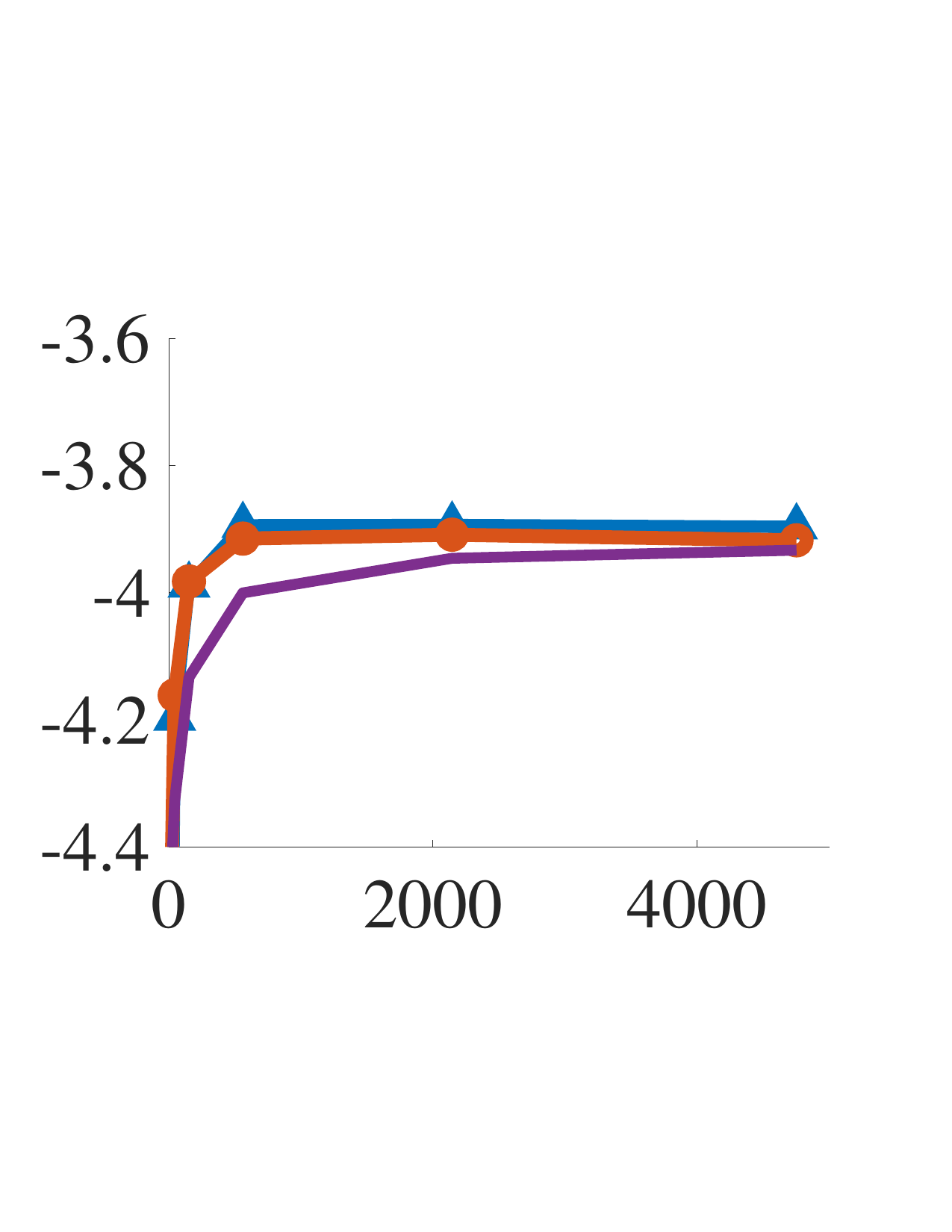}&
\includegraphics[width=.22\linewidth, trim={0 150 50 100}, clip]{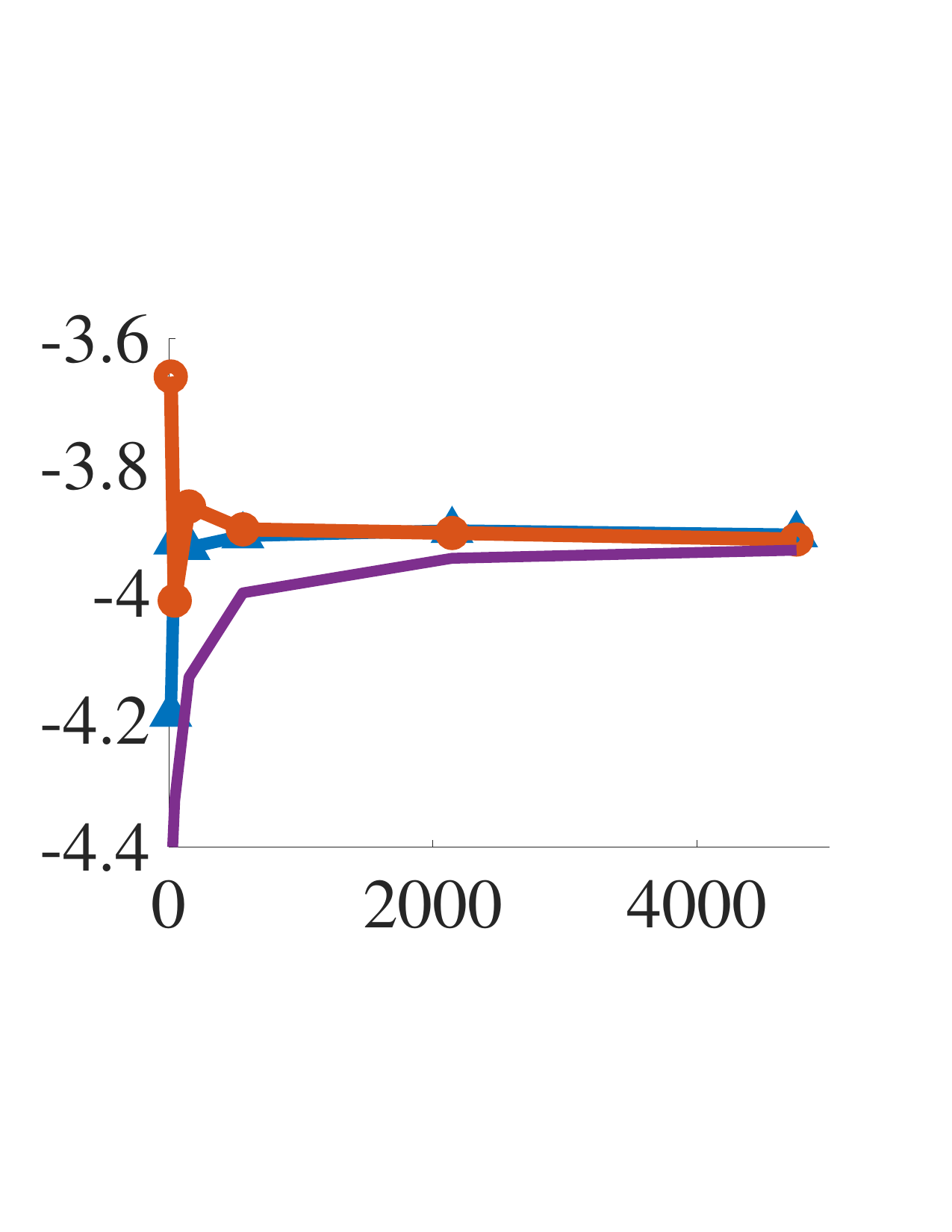}\\

\end{tabular}
\caption{The $y$-displacement ($y$-axis) in cm vs solid DoF count ($x$-axis) of the compressed block using different coupling strategies and $\mfac$ values.}
\label{fig:cb_disp_bs3}
\end{figure}

\subsubsection{Cook's Membrane}
\label{subsec:cooks-membrane}
The Cook's membrane benchmark is a classical plane strain problem involving a swept and tapered quadrilateral.
This benchmark was first proposed by Cook \etal~\cite{RDCook1974} and is commonly used to test numerical methods for incompressible elasticity.
The computational domain for this benchmark is $[0,L]\times[0,L]$, with $L = 10\ \text{cm}$.
The Cartesian grid uses $N = \mathrm{ceil}\left(M \efac \mfac \cdot \dfrac{10}{6.5}\right)$ cells in each coordinate direction, in which $M$ is the number of elements per edge in the Lagrangian mesh, $10$ is the length of the computational domain, and $6.5$ is the longest side of the structure.
The time step size is $\Delta t = 0.001 \euleriandx \ \text{s}$.
Figure~\ref{cooks} depicts the dimensions of the solid domain and the overall problem specification.
An upward loading traction is applied to the right side, and the left side is fixed in place; see Figure~$\ref{cooks}$.
Figure~\ref{cooks} shows the displacement of the uppermost right-hand corner, which is the primary quantity of interest in this benchmark.
See Table~\ref{tb:cm-param} for the benchmark's parameters.
The penalty parameter to fix the left side in place is $\kappa_{\text{S}} = 0.125 \cdot \frac{\Delta x}{\Delta t}\frac{\text{dyn}}{\text{cm}^3}$.
We gradually apply the traction to the solid boundary linearly in time so that the full load is applied at $T_{\text{l}} = 20.0$ s, and we wait until time $T_\text{f} = 50.0$ s for the structure to reach equilibrium.
All other structural boundaries have stress-free boundary conditions applied.
The number of solid DoFs ranges from $m = 25$ to $m = 4225$.
We explore the effect of using mesh factors of $\mfac = 1.0,$ $1.5,$ and $2.0$.

Figures~\ref{fig:cm_def} and \ref{fig:cm_disp_bs3} depict representative deformations and the displacement of a selected point.

As interaction points are placed further apart with nodal coupling, one might expect that case will require a relatively finer structural mesh than those used by Vadala-Roth \etal~\cite{Vadala-Roth2020} (that study used $\mfac = 2.0$ for this benchmark).
However, much like the results for the compressed block, both the elementally and nodally coupled methods converge, although the $\mfac = 2.0$ nodally coupled cases seem to converge more slowly when using elements higher than $\Pone$.
Here we have used $\mfac \geq 1$ and, notably, have shown that we may still achieve converged results with a structural mesh that is relatively coarser than the background Cartesian grid, despite the interaction points being further spaced apart.
Finally, Figure~\ref{fig:cm_def} compares the deformations, clearly indicating qualitative agreement between both cases.

\begin{table}
\centering
\begin{tabular}{| c | c | c | c | }
\hline
Density & $\rho$ & $1.0$ & $\frac{\text{g}}{\text{cm}^3}$\\
\hline
Viscosity & $\mu$ & $0.16$ & $\frac{\text{dyn} \cdot \text{s}}{\text{cm}^2}$ \\
\hline
Material model & - & modified neo-Hookean & - \\
\hline
Shear modulus & $G$ & $83.333$ & $\frac{\text{dyn}}{\text{cm}^2}$  \\
\hline
Numerical bulk modulus & $\kappas$ & $388.889$
& $\frac{\text{dyn}}{\text{cm}^2}$\\
\hline
Final time & $T_\text{f}$ & $50.0$  & s\\
\hline
Load time & $T_{\text{l}}$ & $20.0$ & s \\
\hline

\end{tabular}
\caption{Parameters for the Cook's membrane benchmark (Section~\ref{subsec:cooks-membrane}).}
\label{tb:cm-param}
\end{table}

\begin{figure}
\centering
\includegraphics[width=.7\linewidth, trim={60 0 70 0}, clip]{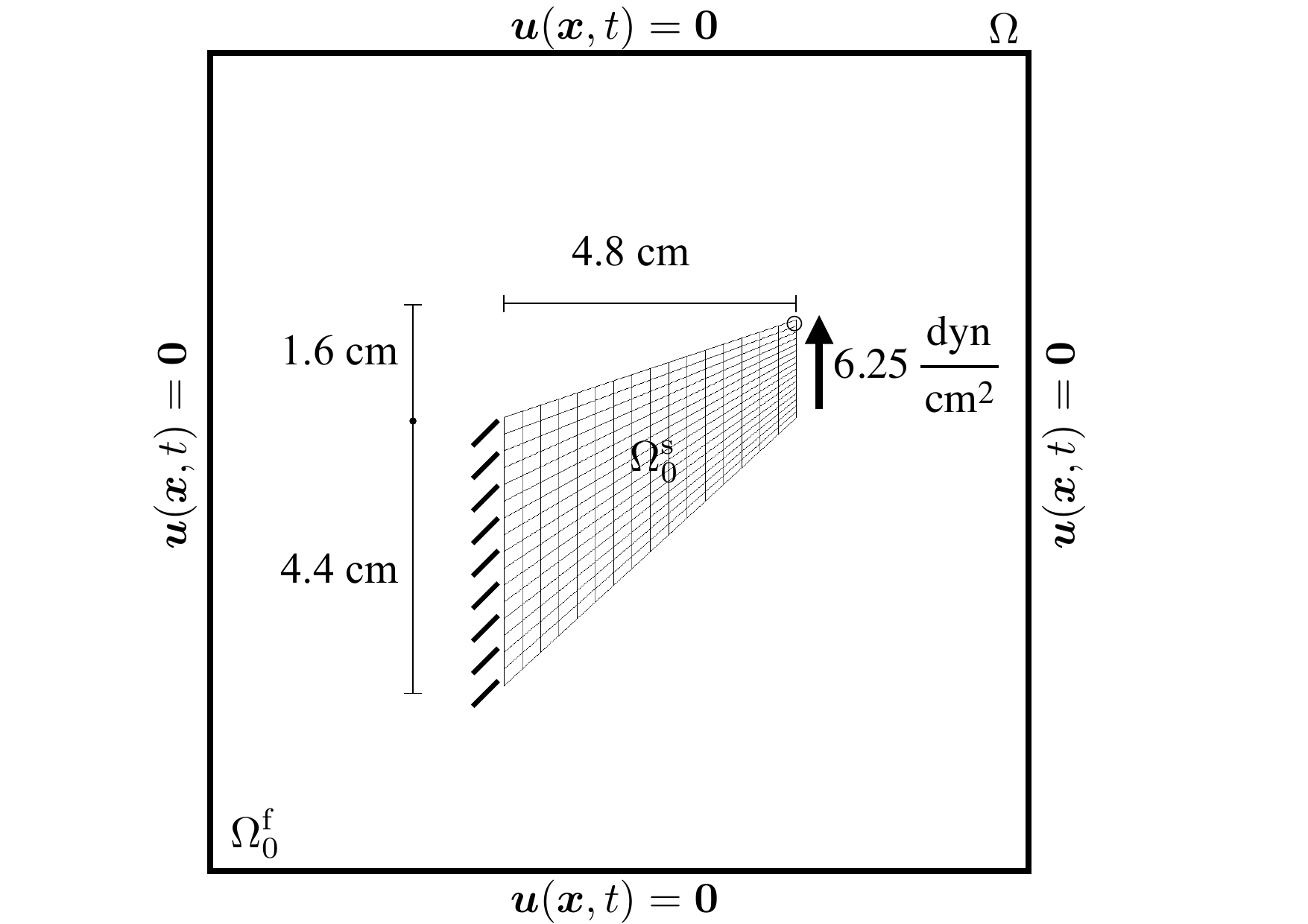}
\caption{
Specifications of the Cook's membrane benchmark (Subsection~\ref{subsec:cooks-membrane}).
The primary quantity of interest is the $y$-displacement as measured at the upper right hand corner, indicated by the circle.
The structure, shown here in its initial configuration and denoted by $\soliddomO$, is immersed in a fluid denoted by $\fluiddomO$.
The entire computational domain is $\Omega = \fluiddom \cup \soliddom$.
Zero fluid velocity is enforced on $\partial \Omega$.
}
\label{cooks}
\end{figure}

\begin{figure}
\centering
\begin{tabular}{l c c c}
&$t = 0\ \text{s}$ & $t = 9.6\ \text{s}$ & $t = 50\ \text{s}$  \\
\rotatebox{90}{$\quad\;\;$ Elemental}&
\includegraphics[width=.25\linewidth]{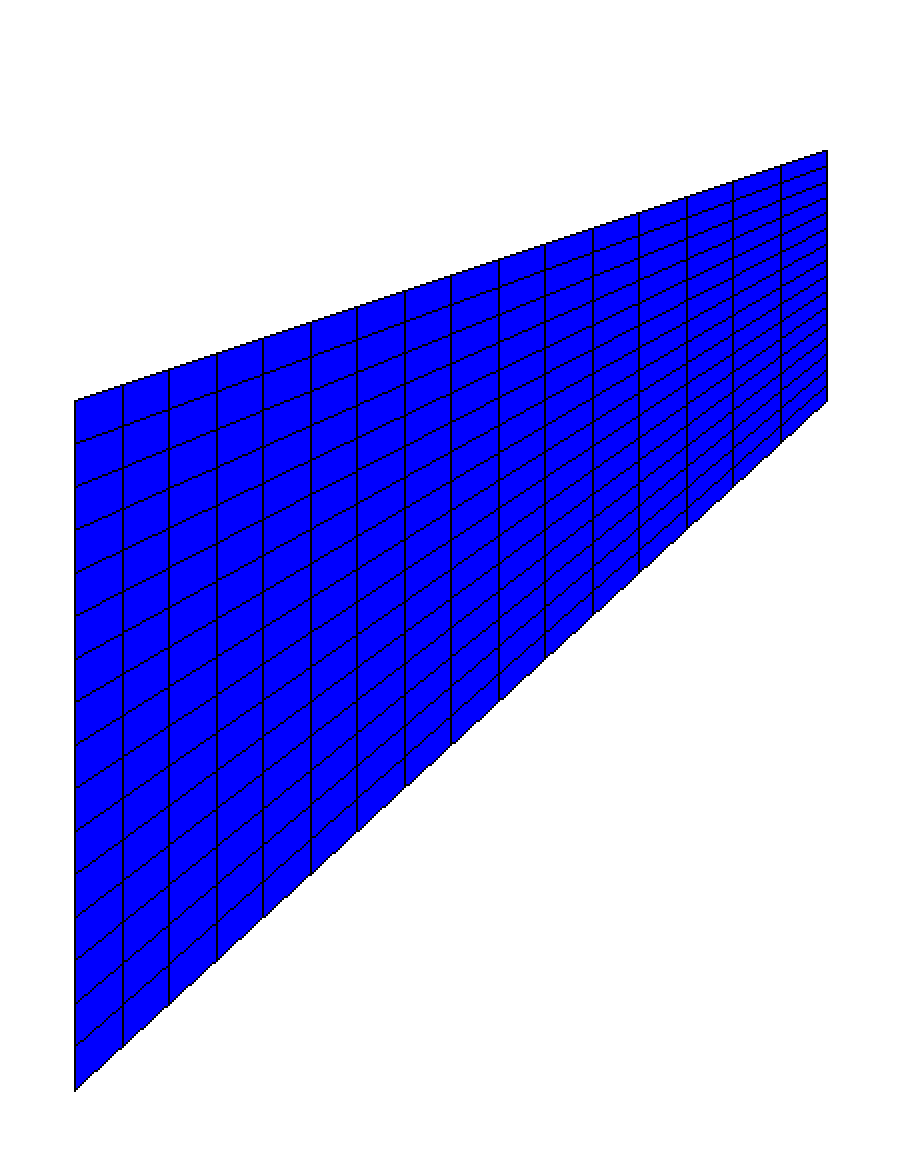}&
\includegraphics[width=.25\linewidth]{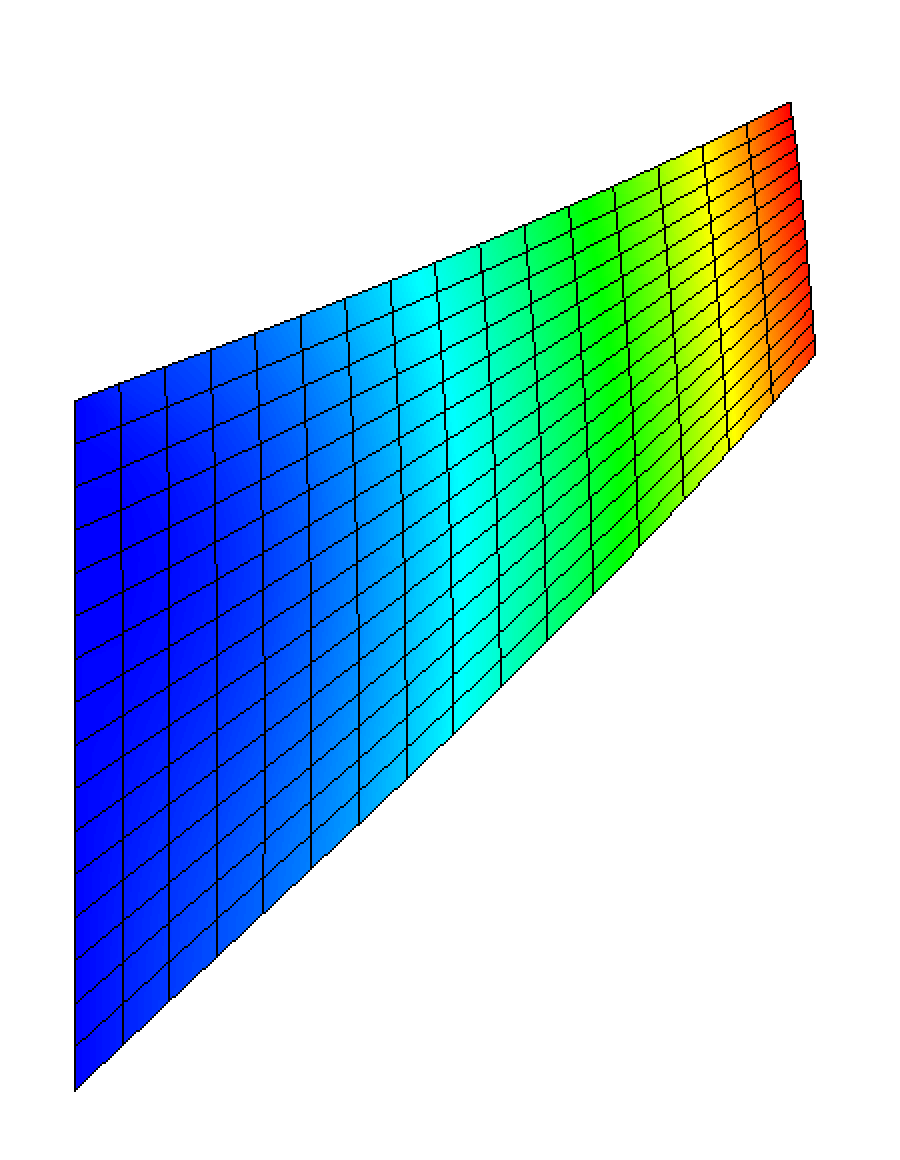}&
\includegraphics[width=.25\linewidth]{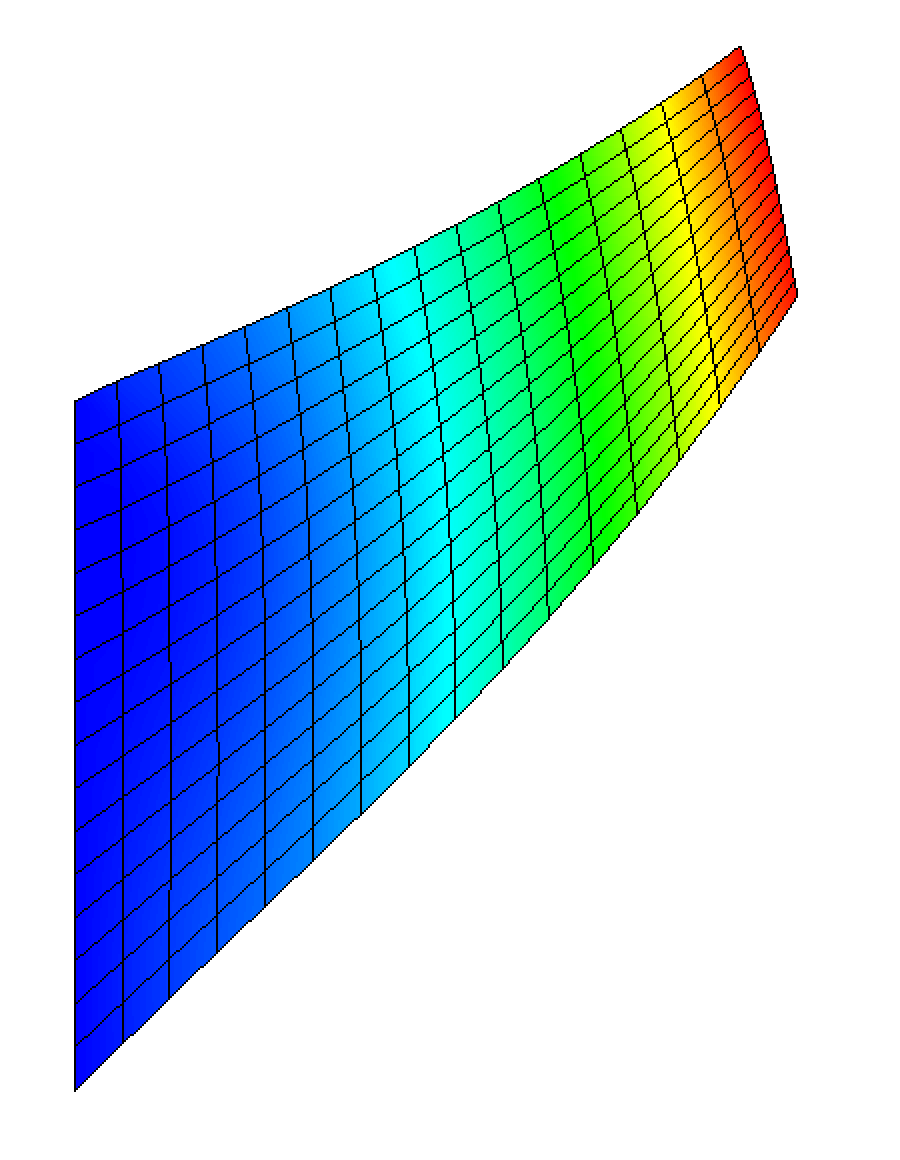}\\

\rotatebox{90}{$\quad\qquad$ Nodal}&
\includegraphics[width=.25\linewidth]{Figures/cm/cm_nodal_t=0.png}&
\includegraphics[width=.25\linewidth]{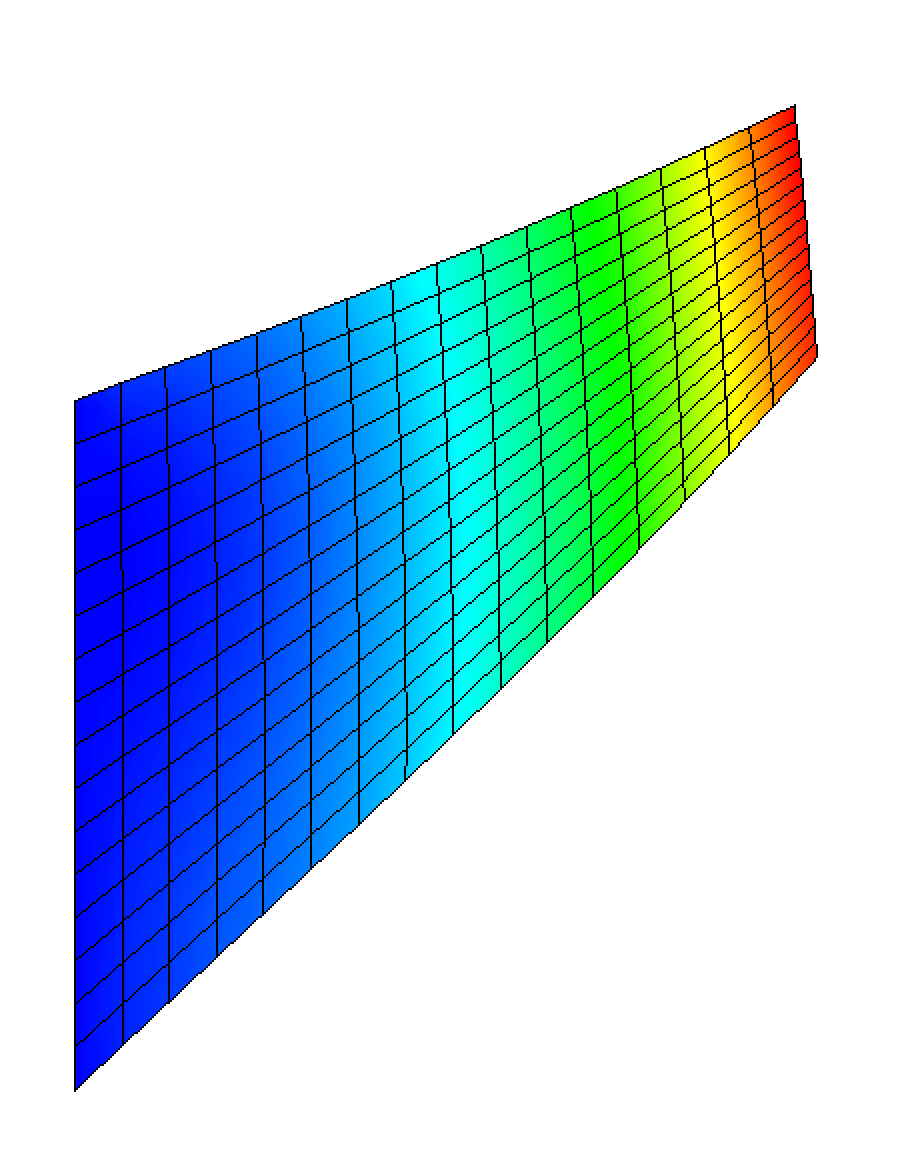}&
\includegraphics[width=.25\linewidth]{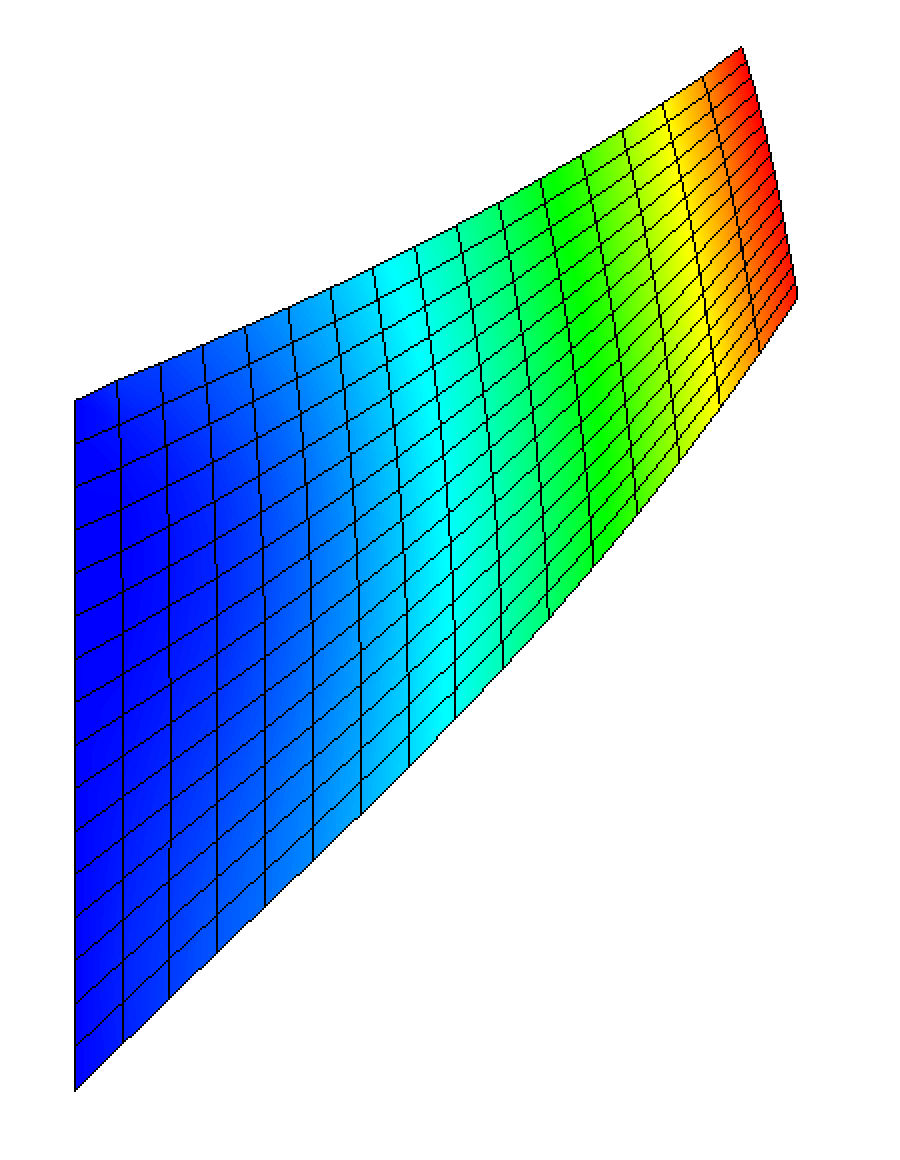}\\
&& $d^{\text{s}}_2$ &\\
&& \includegraphics[width=.3\linewidth]{Figures/color_bar.pdf}&\\
&&\hspace{0.025\linewidth}$0.0\ \text{cm}$ \hspace{.225\linewidth} $0.3\ \text{cm}$ &
\end{tabular}
\caption{Deformations and $y$-displacement of the Cook's membrane benchmark with both elemental and nodal coupling at different points in time.
  Time values are start of the simulation, (roughly) $0.5T_\text{l}$, and $T_\text{f}$.
  In both cases the structures are discretized with $\Qone$ elements.
  The nodal case uses $\mfac = 1.0$ and the elemental case uses $\mfac = 2.0$.}
\label{fig:cm_def}
\end{figure}

\begin{figure}
\begin{tabular}{l c c c c}
& $\Pone$ & $\Qone$ & $\Ptwo$ & $\Qtwo$ \\

\rotatebox{90}{$\quad\;\;\mfac = 1.0$}&
\includegraphics[width=.22\linewidth, trim={0 150 50 100}, clip]{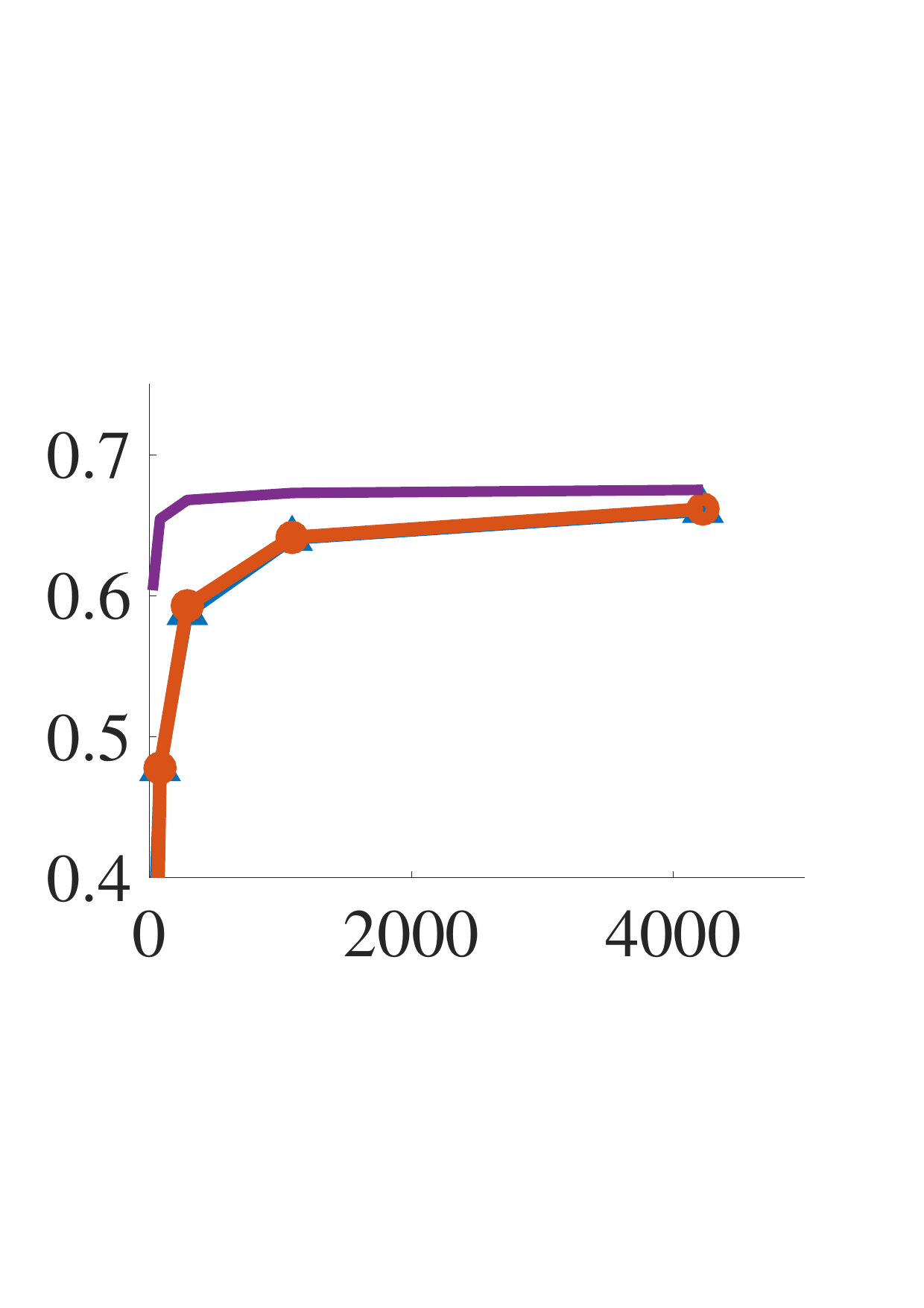}&
\includegraphics[width=.22\linewidth, trim={0 150 50 100}, clip]{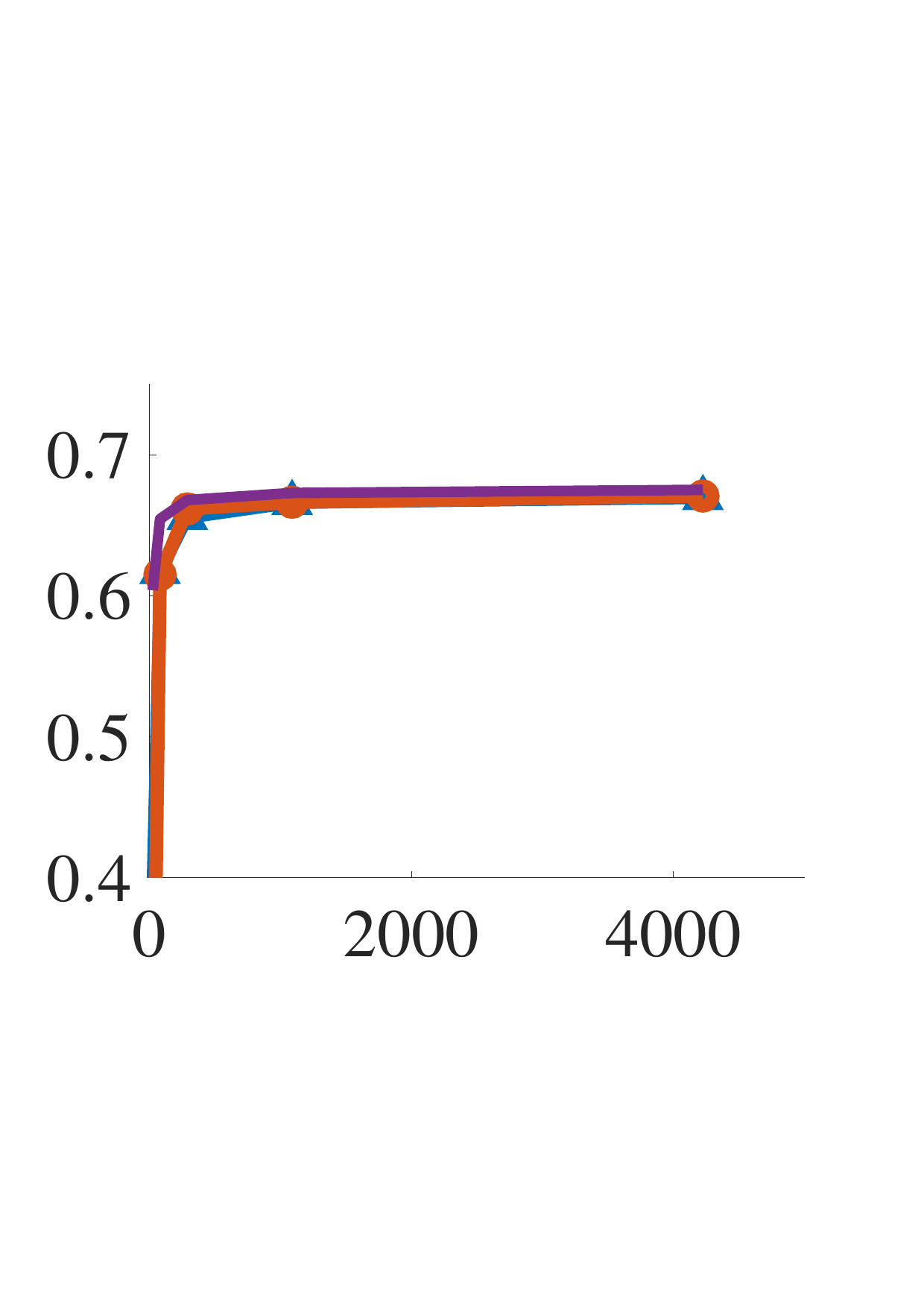}&
\includegraphics[width=.22\linewidth, trim={0 150 50 100}, clip]{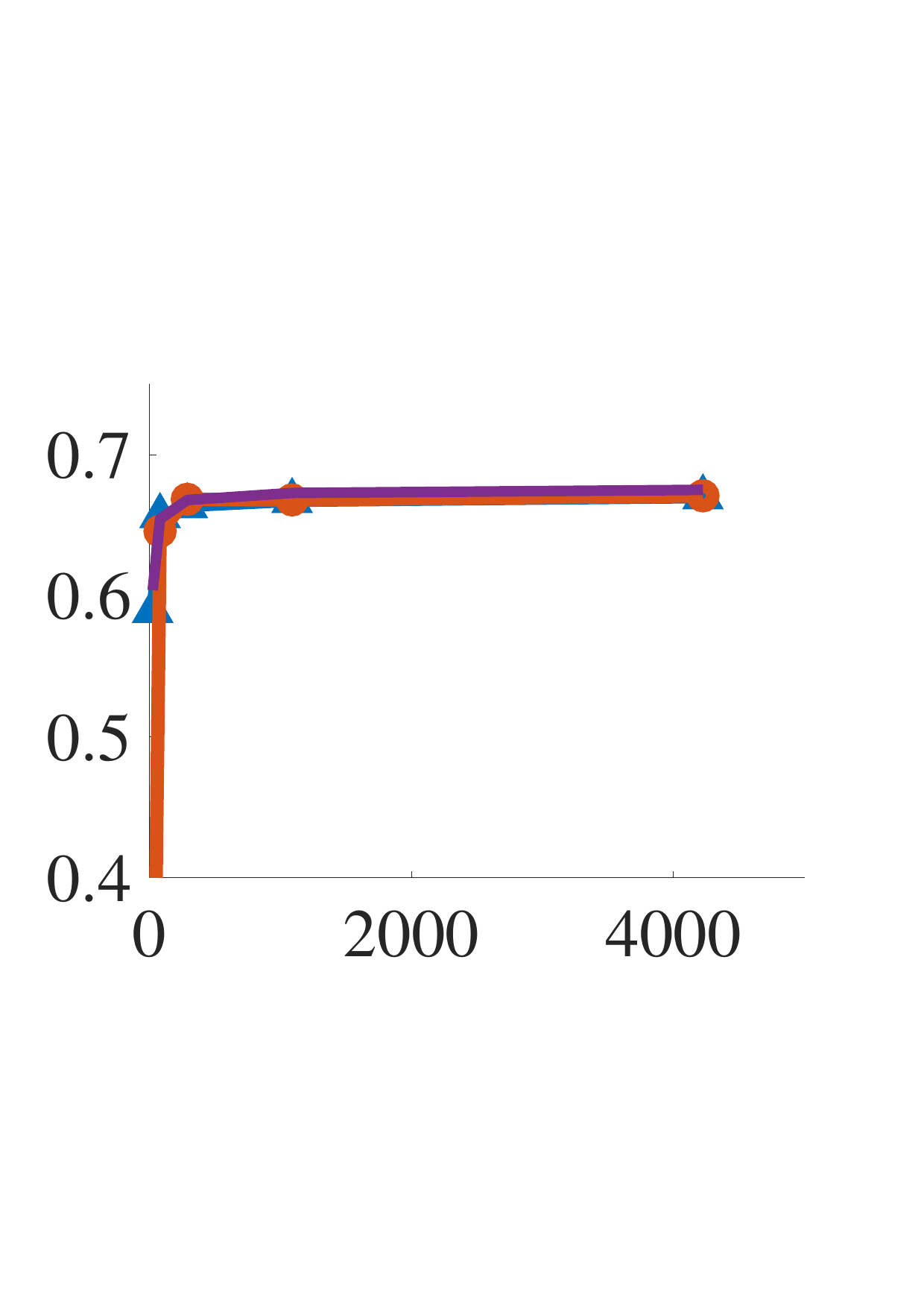}&
\includegraphics[width=.22\linewidth, trim={0 150 50 100}, clip]{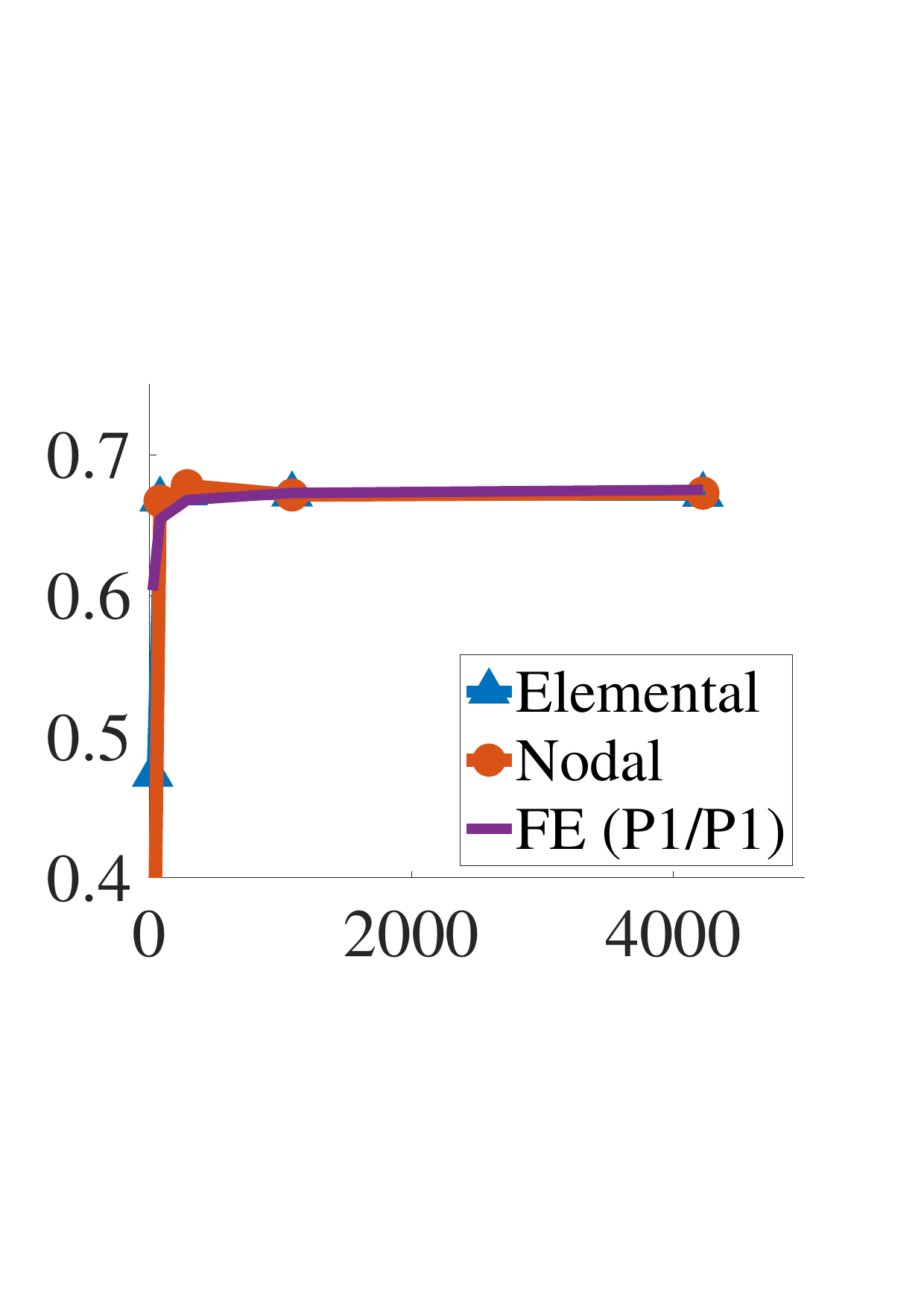}\\

\rotatebox{90}{$\quad\;\;\mfac = 1.5$}&
\includegraphics[width=.22\linewidth, trim={0 150 50 100}, clip]{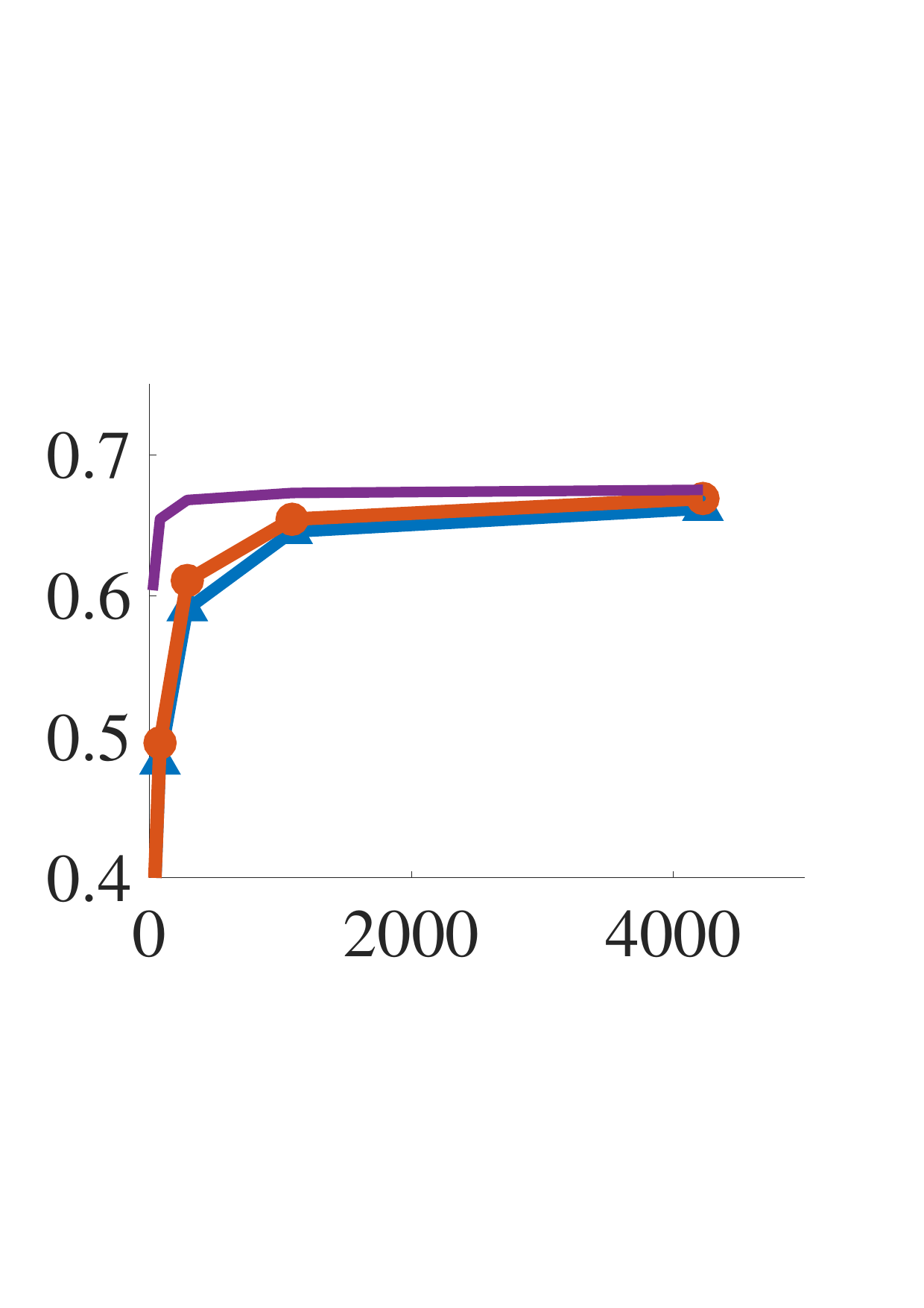}&
\includegraphics[width=.22\linewidth, trim={0 150 50 100}, clip]{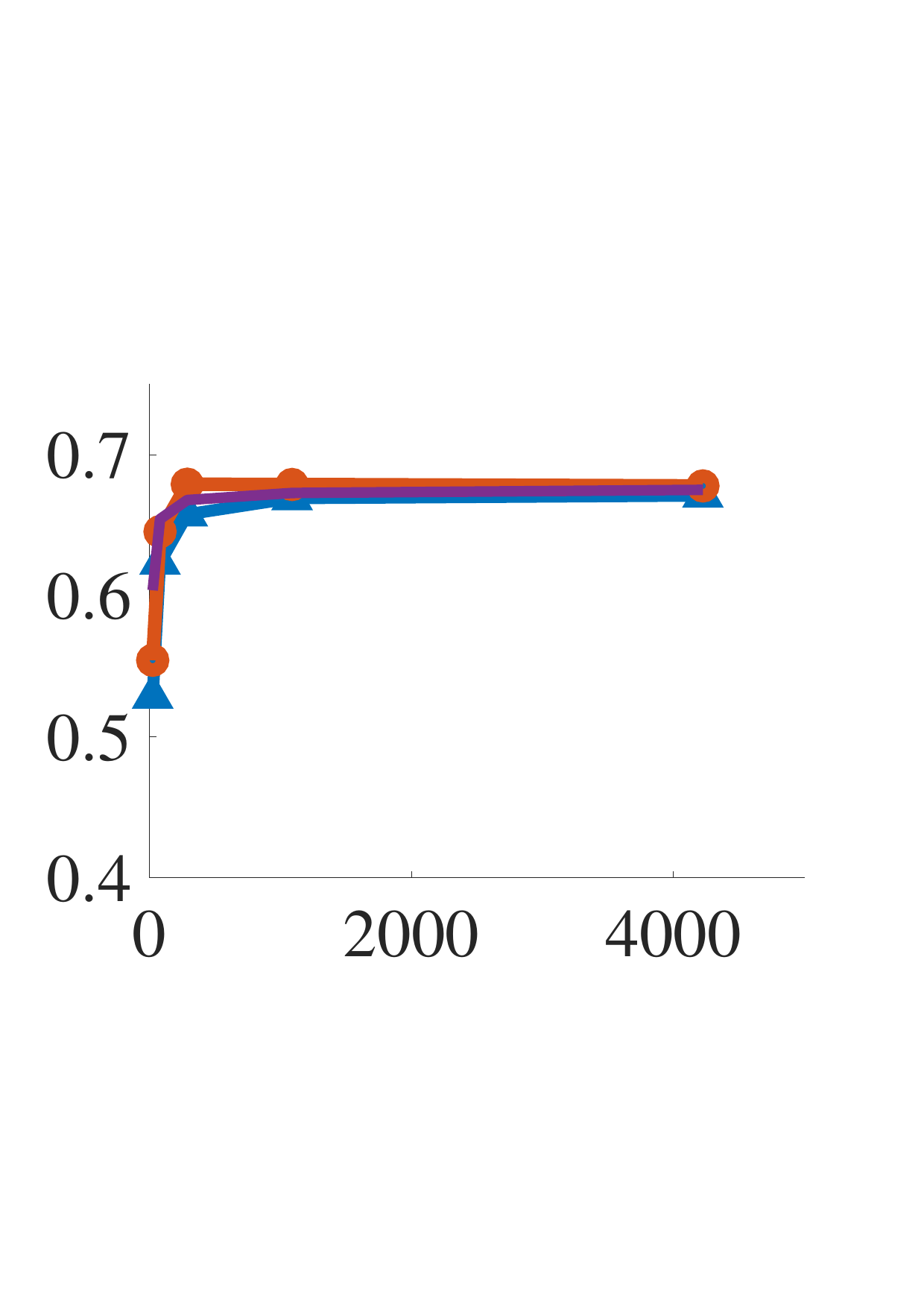}&
\includegraphics[width=.22\linewidth, trim={0 150 50 100}, clip]{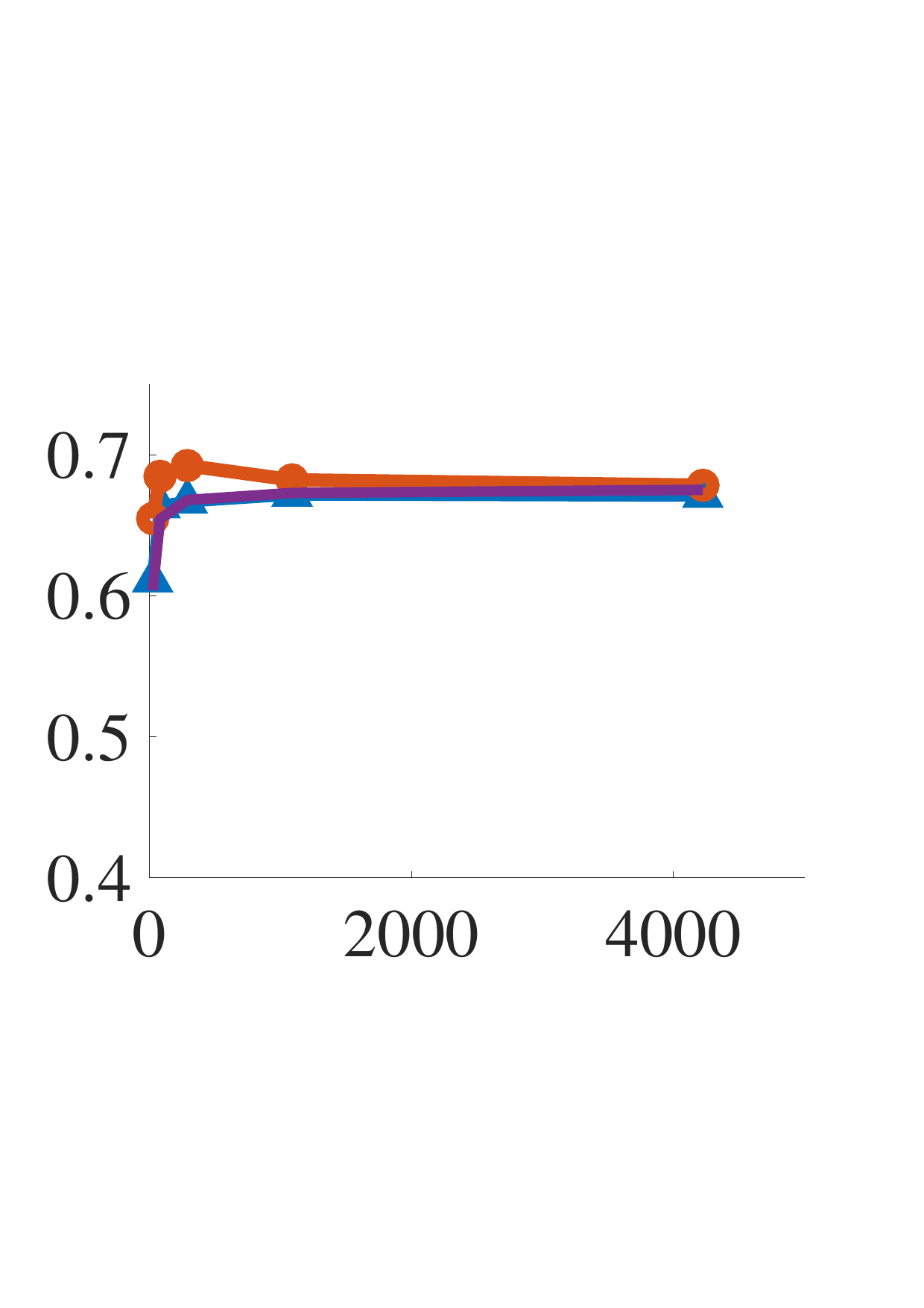}&
\includegraphics[width=.22\linewidth, trim={0 150 50 100}, clip]{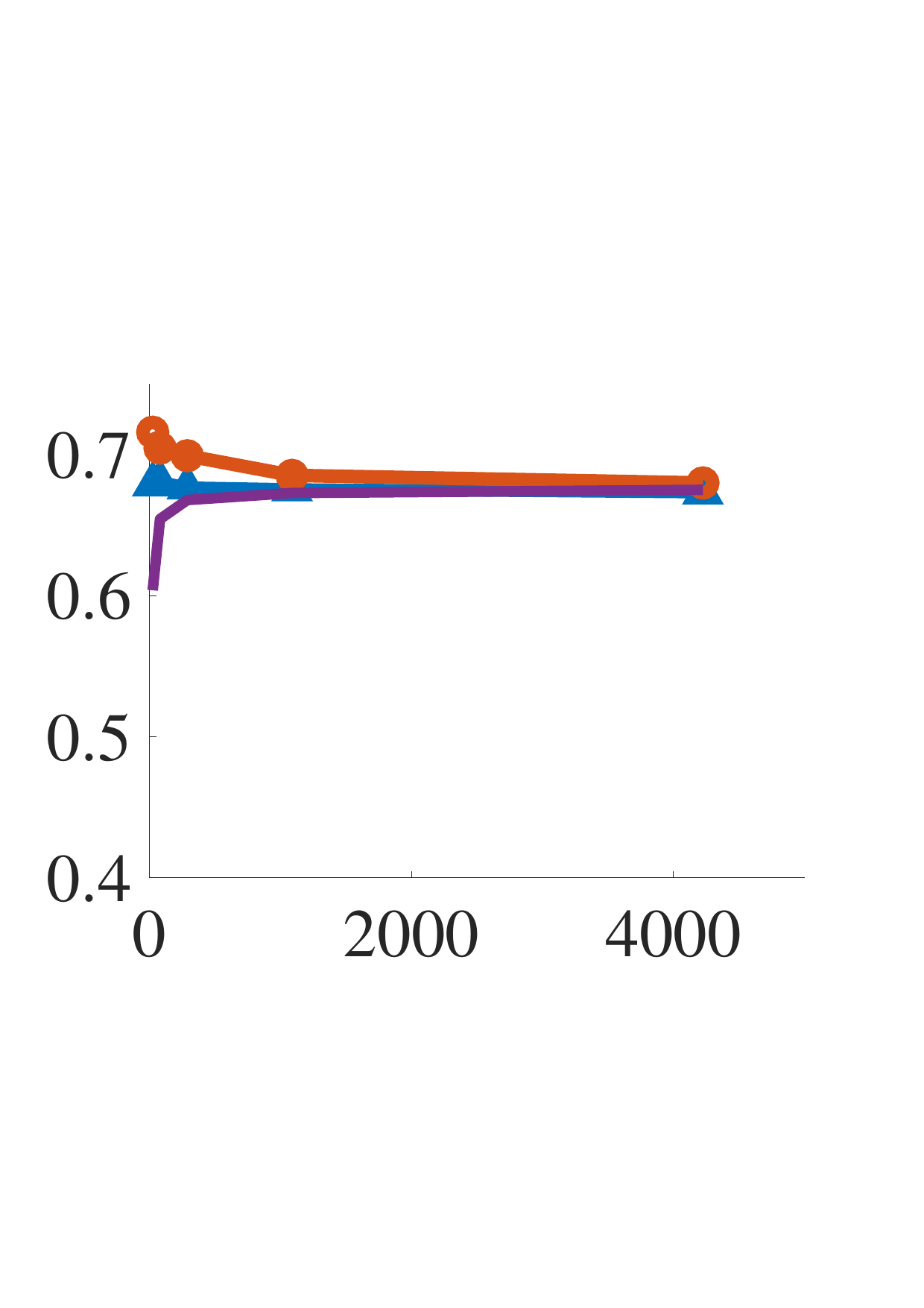}\\

\rotatebox{90}{$\quad\;\;\mfac = 2.0$}&
\includegraphics[width=.22\linewidth, trim={0 150 50 100}, clip]{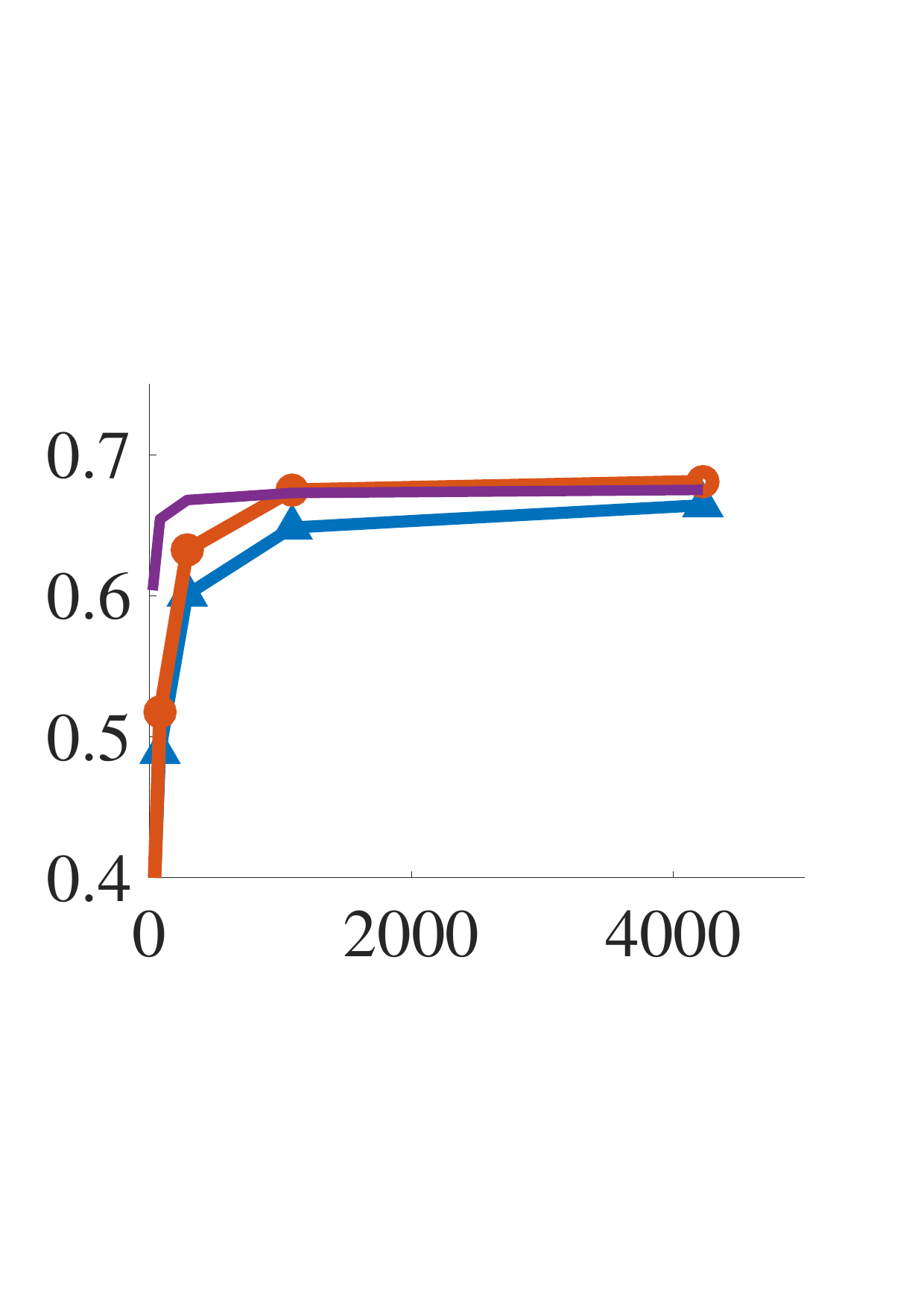}&
\includegraphics[width=.22\linewidth, trim={0 150 50 100}, clip]{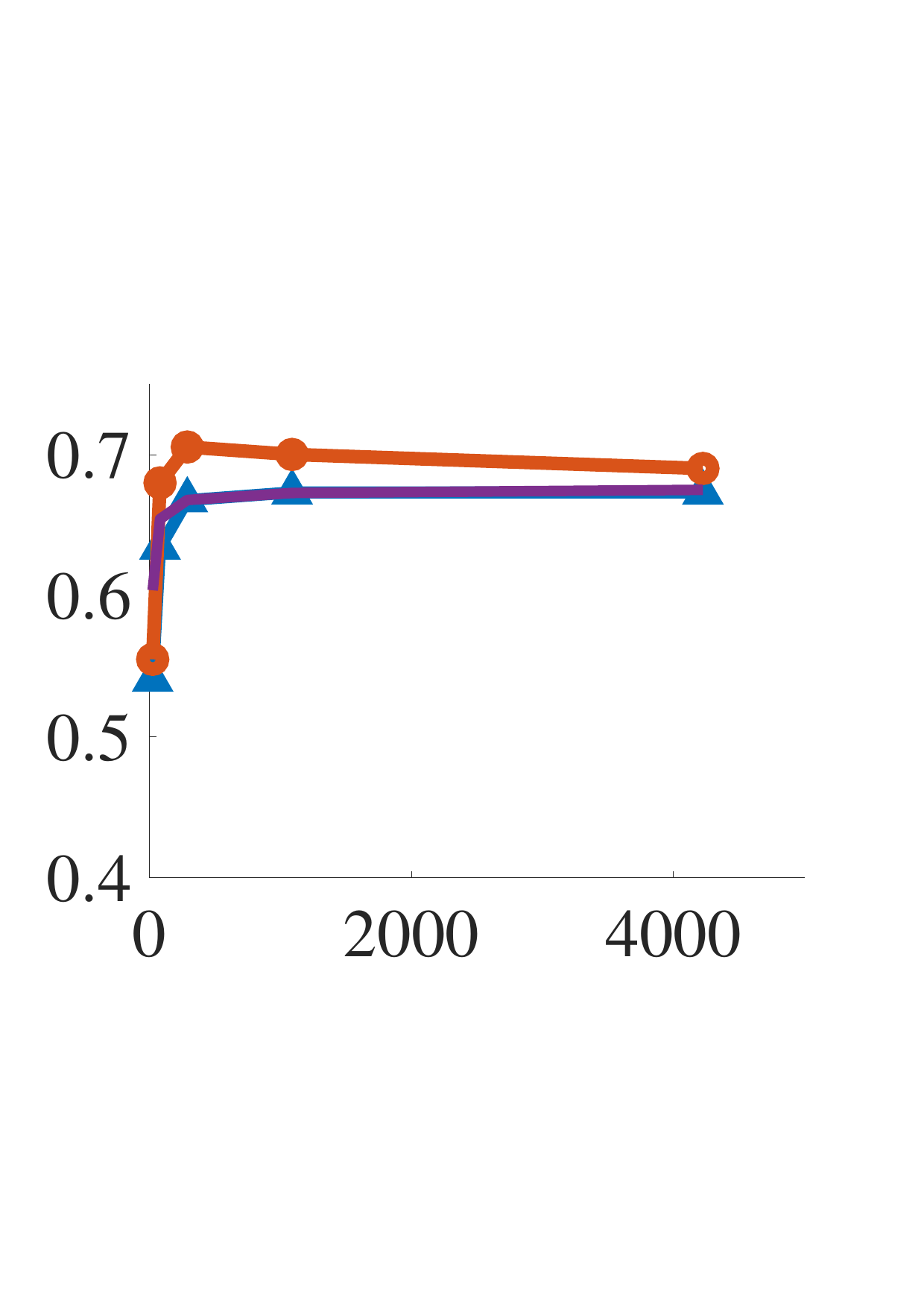}&
\includegraphics[width=.22\linewidth, trim={0 150 50 100}, clip]{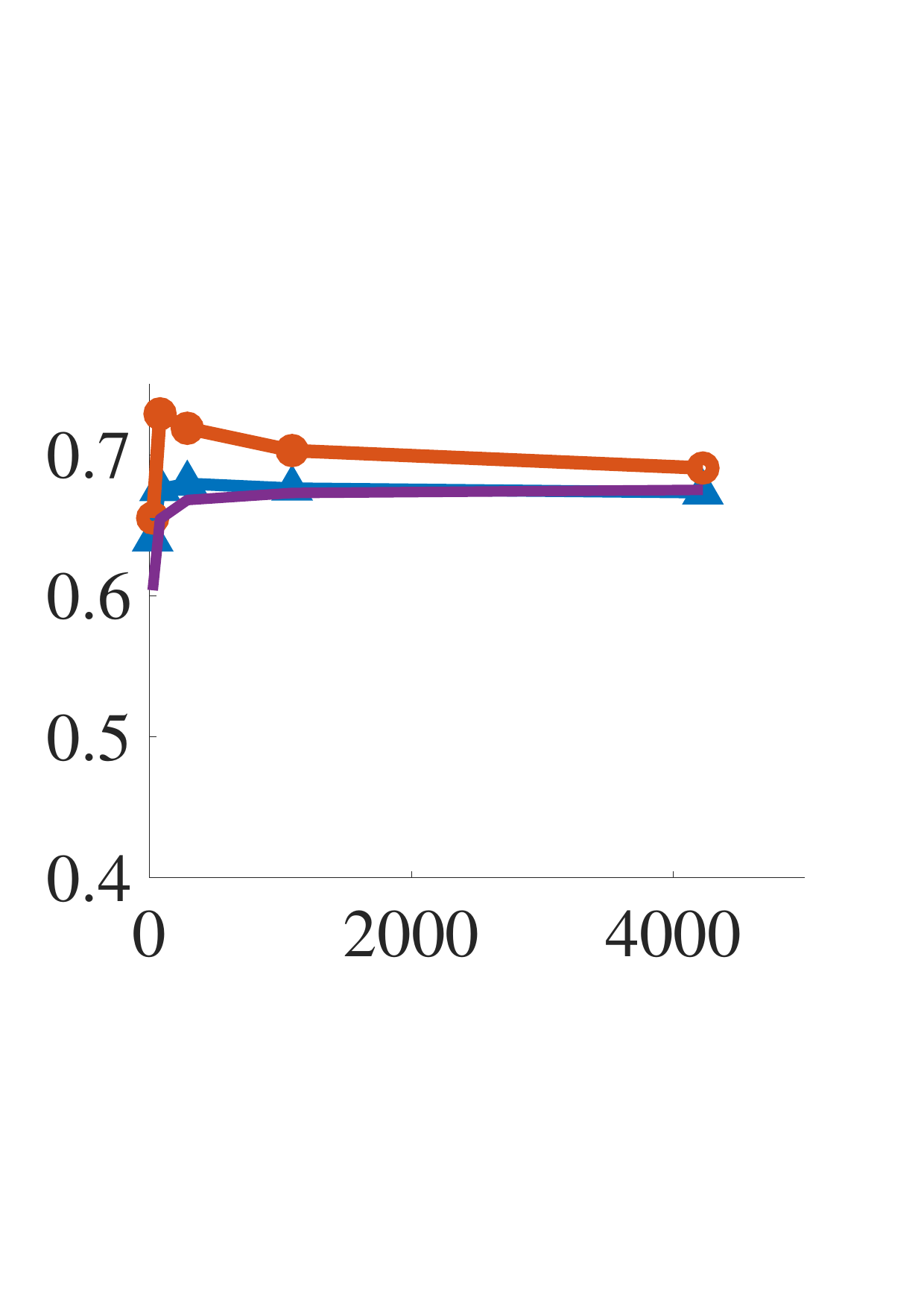}&
\includegraphics[width=.22\linewidth, trim={0 150 50 100}, clip]{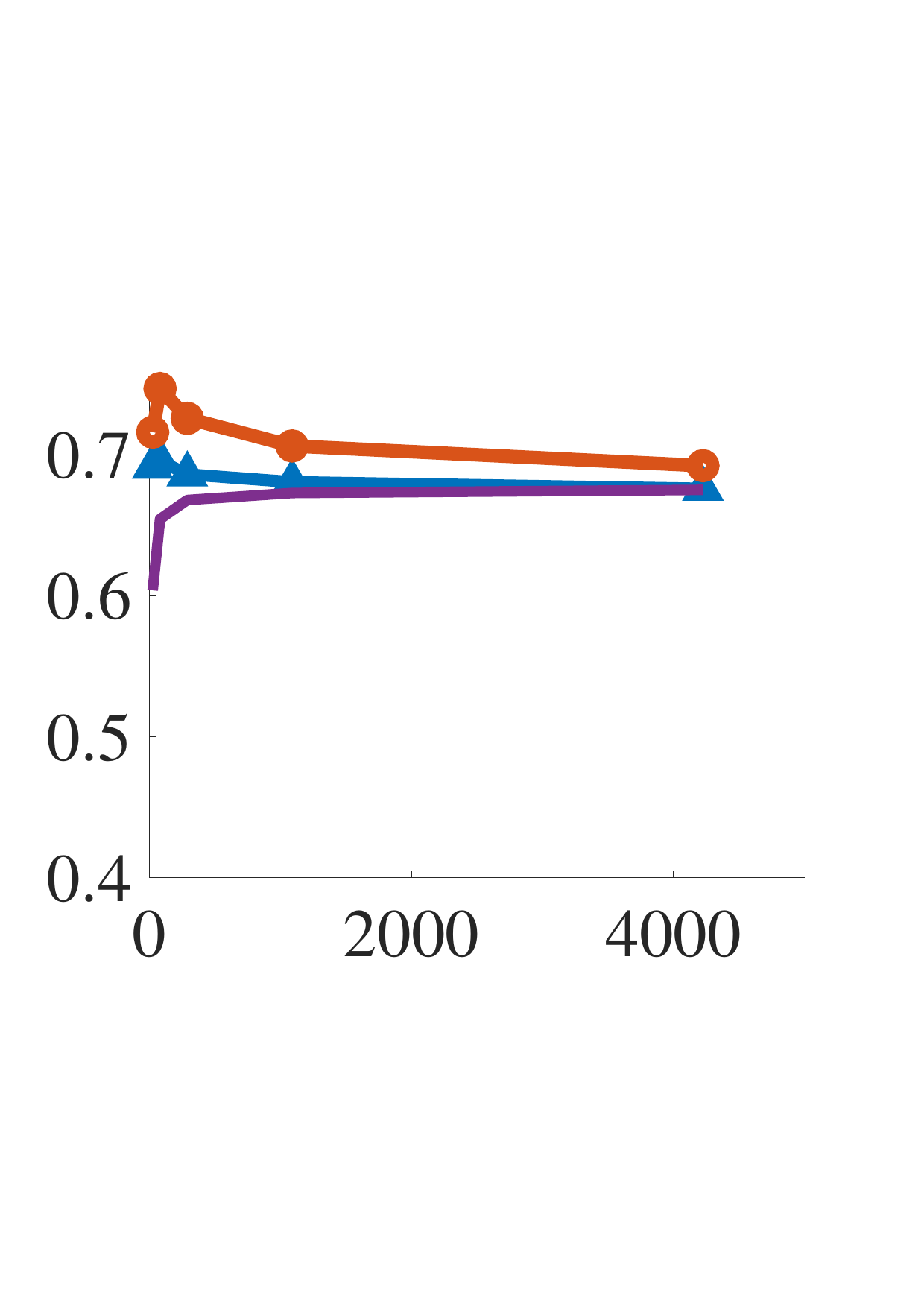}\\

\end{tabular}
\caption{The $y$-displacement ($y$-axis) in cm vs solid DoF count ($x$-axis) of the Cook's membrane for different coupling strategies and $\mfac$ values.}
\label{fig:cm_disp_bs3}
\end{figure}

\subsubsection{Torsion}
\label{Torsion}
This benchmark is based on a similar test by Bonet \etal~\cite{Bonet2015} and then modified by Vadala-Roth~\etal~\cite{Vadala-Roth2020} for an FSI framework.
It involves applying torsion to the top face of an elastic beam, while the opposite face is fixed in place; see Figure~\ref{torsion_diag}.
Zero traction boundary conditions are applied along all other faces.
The computational domain is $\Omega = [0, L]^3$ with $L = 9\ \text{cm}$.
The Cartesian grid uses $N = \mathrm{ceil}\left(M \efac \mfac \cdot \dfrac{9}{6}\right)$ cells in each coordinate direction, in which $M$ is the number of elements per edge in the Lagrangian mesh, $9$ is the length of the computational domain, and $6$ is the length of the structure.
The time step size is $\Delta t = 0.001 \euleriandx \ s$.
The structure is an elastic beam with an incompressible modified Mooney-Rivlin material model.

The torsion is imposed via displacement boundary conditions, and this face is rotated by $\theta_{\text{f}} = 2.5 \pi$.
The material and numerical parameters are listed in Table~\ref{tb:tt-param}.
A body damping parameter of $\eta_{\text{B}} = 0.2\cdot(c_1 + c_2) = 3600 \ \frac{\text{g}}{\text{cm}^3\cdot\text{s}}$ was applied on the interior of the structural domain to dampen oscillations as the structure reaches steady state.
The penalty parameter used to fix the stationary face in place is $\kappa_{\text{S}} = 1.25 \cdot \frac{\Delta x}{\Delta t} \frac{\text{dyn}}{\text{cm}^3}$.
We gradually apply the traction to the solid boundary linearly in time so that the full load is applied at $T_{\text{l}} = 2.0$ s, and we wait until time $T_\text{f} = 5.0$ s for the structure to reach equilibrium.
The number of solid DoFs range from $m = 65$ to $m = 12{,}337$.
Results are presented, only for $\mfac = 1.0$, in Figures \ref{fig:tt_def} and \ref{fig:tt_disp_bs3}.

Because of the greater computational costs of three-dimensional simulations, cases with finer Cartesian grids were omitted.
Using the intuition of the previous benchmark results and the results of Vadala-Roth~\etal~\cite{Vadala-Roth2020}, which show that $\mfac = 2.0$ yields convergent results with the standard IFED method, we expect the new methods with $\mfac = 1.0$ will converge to the benchmark solution.
Indeed, this appears to be the case, as seen in Figure \ref{fig:tt_disp_bs3}, although more structural DoFs are needed.
Figure~\ref{fig:tt_disp_bs3} shows that results from the elementally and nodally coupled methods are in close agreement, with the exception of $\Ptwo$ elements.
Here, the $\Ptwo$ elementally coupled case encountered severe time-step restrictions resulting from the mesh factor ratio.
For this reason, results for elemental coupling with $\Ptwo$ elements were omitted.
However, the nodally coupled method with $\Ptwo$ elements had no such time-step restrictions, and in fact, converges to the benchmark solution offered by the FE method.
In fact, this $\Ptwo$ case demonstrates faster convergence to the FE solution than the other cases shown here.
Figure~\ref{fig:tt_def} compares the deformations for $\Qone$ elements, again indicating close qualitative agreement between deformations computed by the two methods.

\begin{table}
\centering
\begin{tabular}{| c | c | c | c | }
\hline
Density & $\rho$ & $1.0$ & $\frac{\text{g}}{\text{cm}^3}$\\
\hline
Viscosity & $\mu$ & $0.04$ & $\frac{\text{dyn} \cdot \text{s}}{\text{cm}^2}$ \\
\hline
Material model & - & modified Mooney-Rivlin & - \\
\hline
Material constant 1 & $c_1$ & $9{,}000$ & $\frac{\text{dyn}}{\text{cm}^2}$  \\
\hline
Material constant 2 & $c_2$ & $9{,}000$ & $\frac{\text{dyn}}{\text{cm}^2}$  \\
\hline
Numerical bulk modulus & $\kappas$ & $168{,}000$
& $\frac{\text{dyn}}{\text{cm}^2}$\\
\hline
Final time & $T_\text{f}$ & $5.0$  & s\\
\hline
Load time & $T_{\text{l}}$ & $2.0$ & s \\
\hline
\end{tabular}
\caption{Parameters for the torsion benchmark (Section~\ref{Torsion}).}
\label{tb:tt-param}
\end{table}

\begin{figure}
\centering
\includegraphics[width=0.6\linewidth]{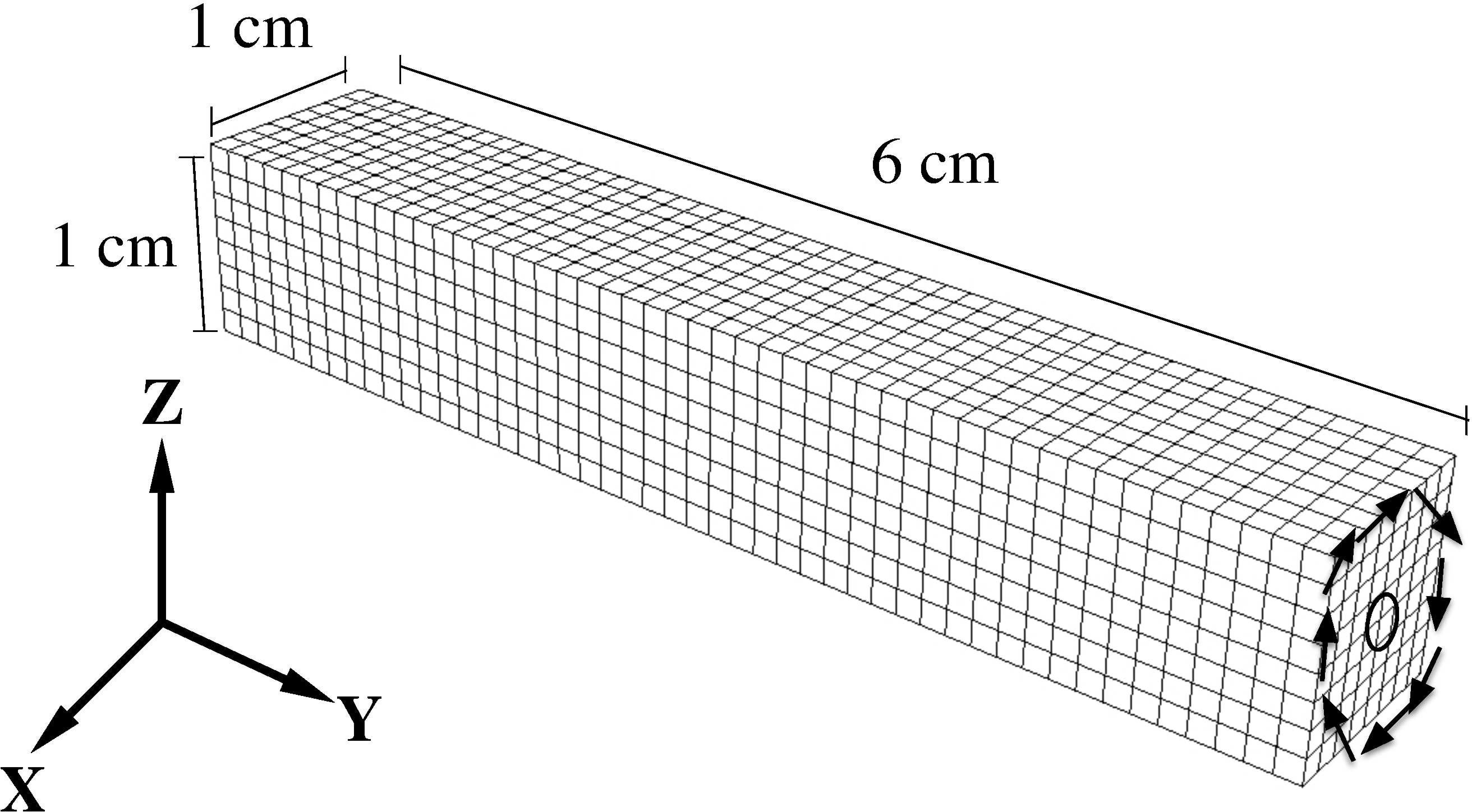}
\caption{
Specifications of the torsion benchmark (Section~\ref{Torsion}).
The face opposite the applied torsion is kept fixed.
The quantity of interest is the $y$-displacement as measured at the encircled area.
To simplify the diagram of this three dimensional benchmark, we omit the computational domain in this figure describing the problem setup.
In the IFED model, however, the structure is contained within a computational domain with dimensions $\Omega = [0,L]^3$ and $L = 9 \ \text{cm}$, and the solid mesh is placed in the center of this domain.
Zero fluid velocity is enforced on $\partial \Omega$.
}
\label{torsion_diag}
\end{figure}

\begin{figure}
\centering
\begin{tabular}{l c c c}
&$t = 0\ \text{s}$ & $t = 1.05\ \text{s}$ & $t = 5\ \text{s}$  \\
\rotatebox{90}{$\quad\;\;$ Elemental}&
\includegraphics[width=.25\linewidth]{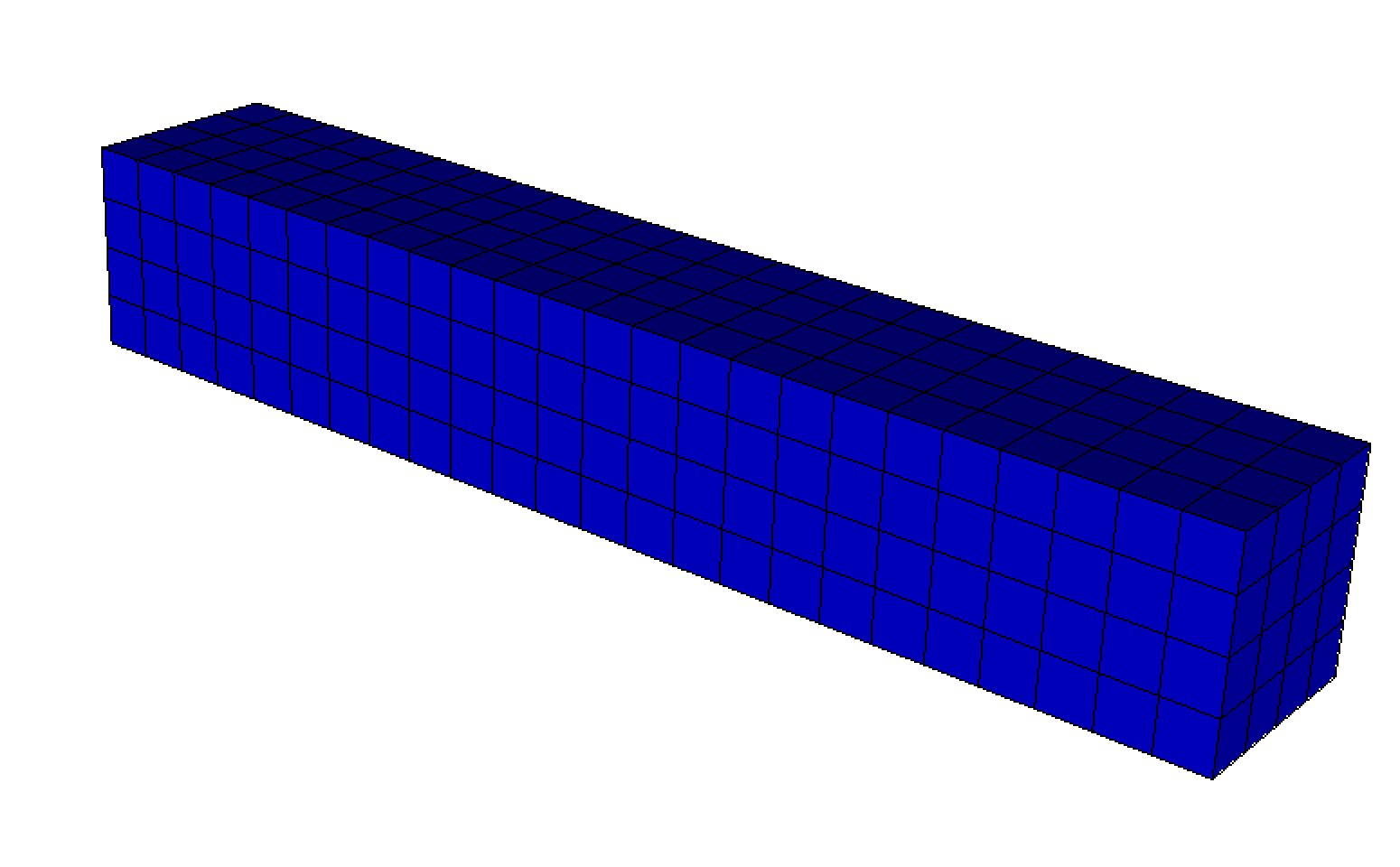}&
\includegraphics[width=.25\linewidth]{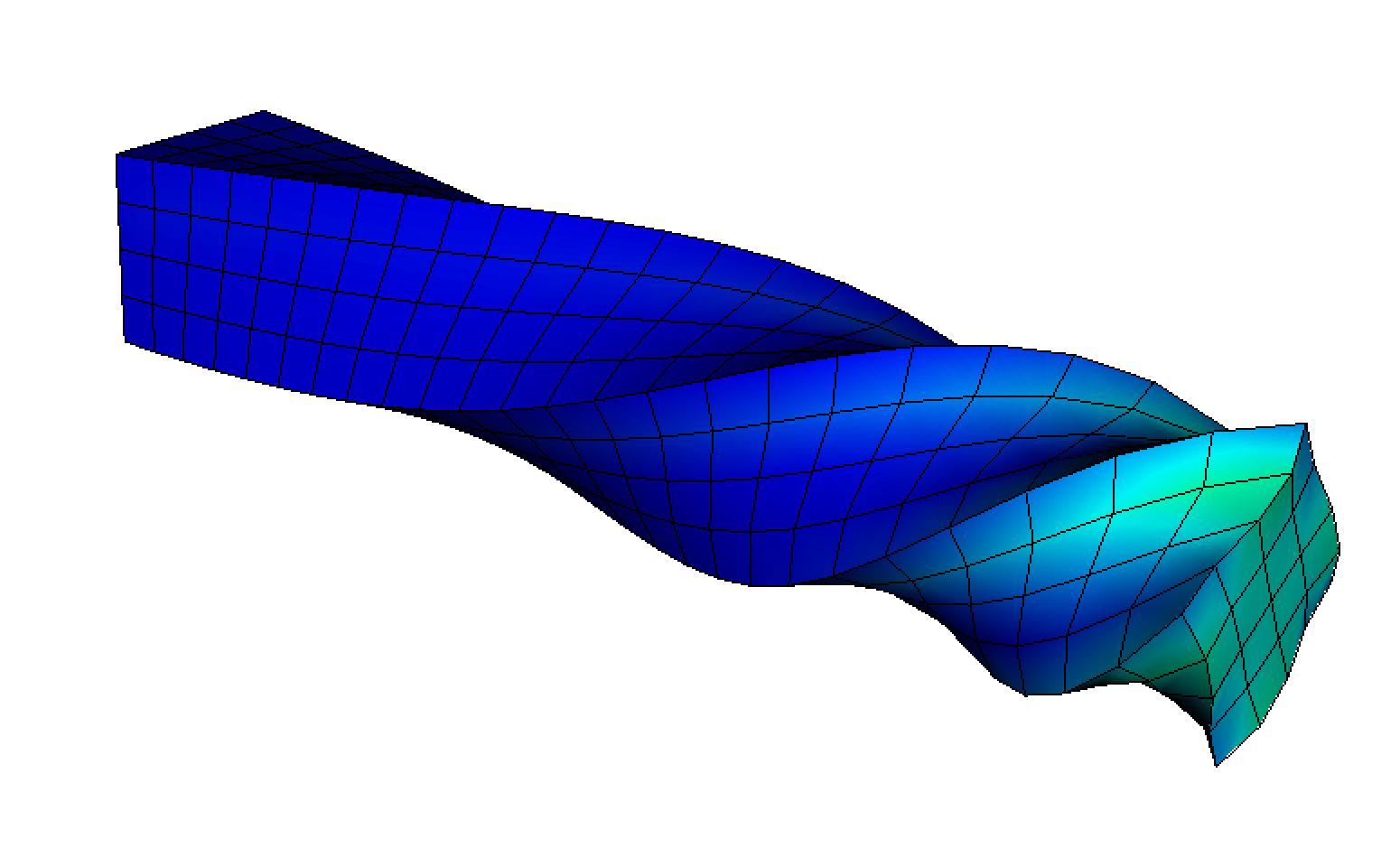}&
\includegraphics[width=.25\linewidth]{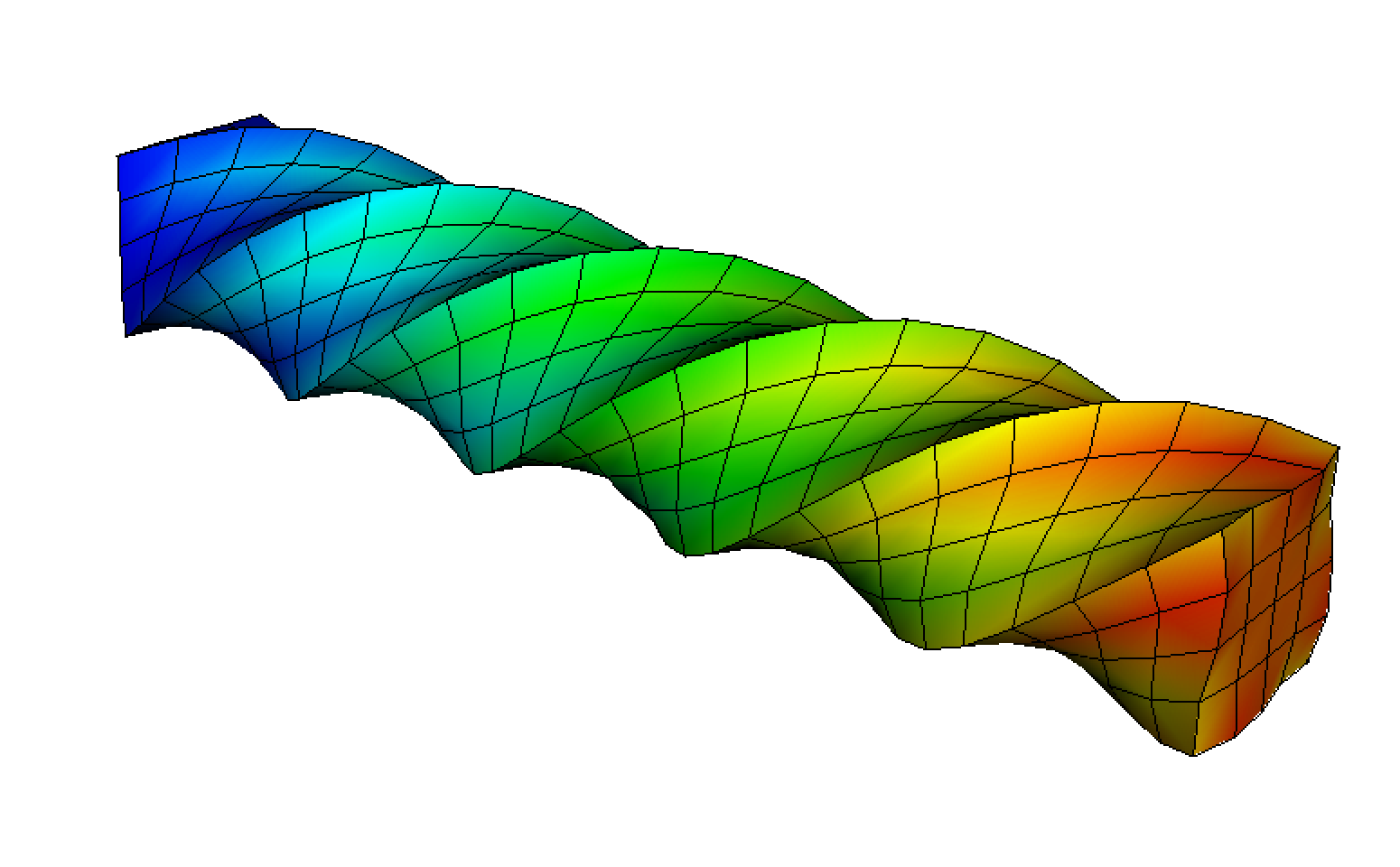}\\

\rotatebox{90}{$\quad\qquad$ Nodal}&
\includegraphics[width=.25\linewidth]{Figures/tt/tt_nodal_t=0.png}&
\includegraphics[width=.25\linewidth]{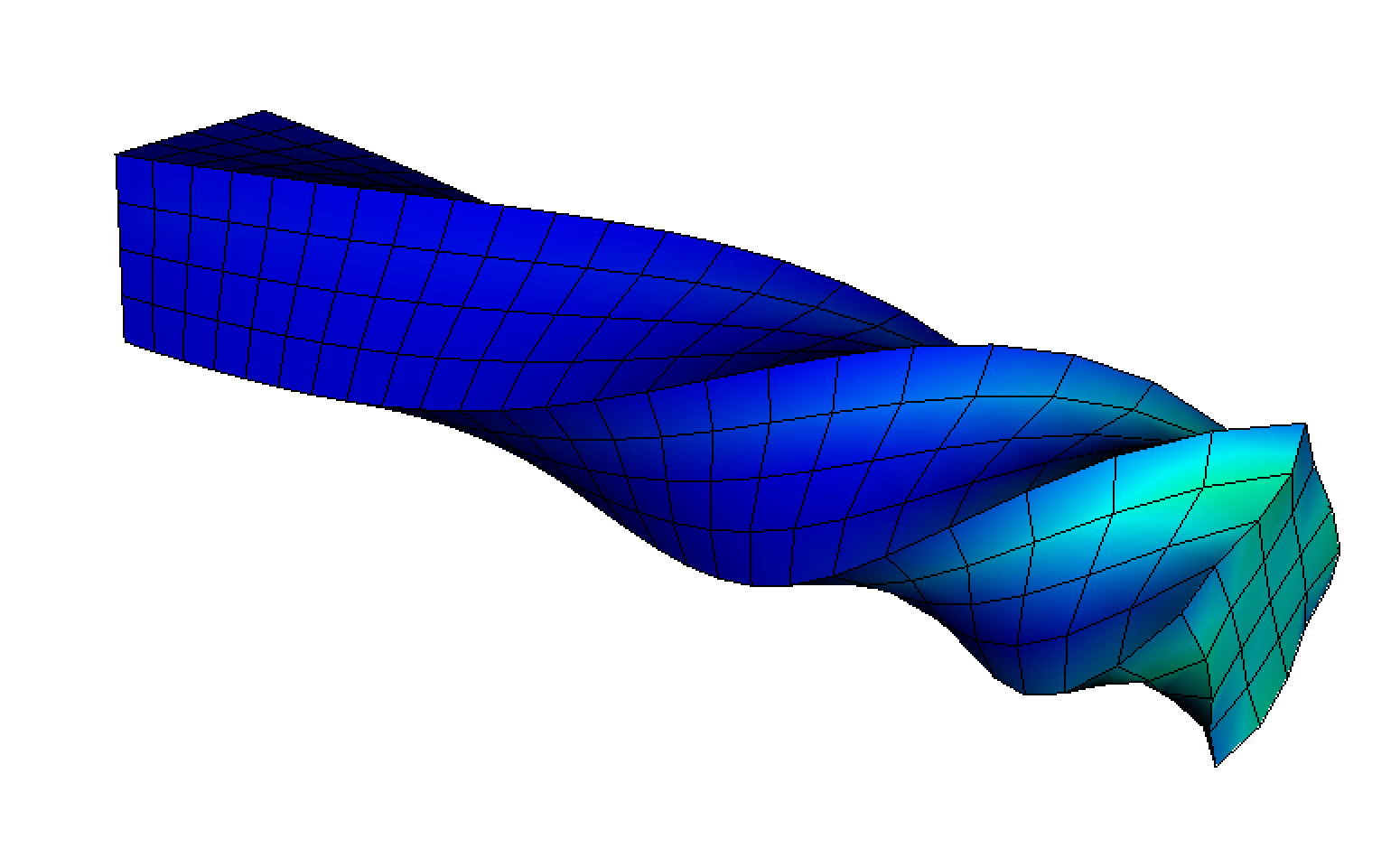}&
\includegraphics[width=.25\linewidth]{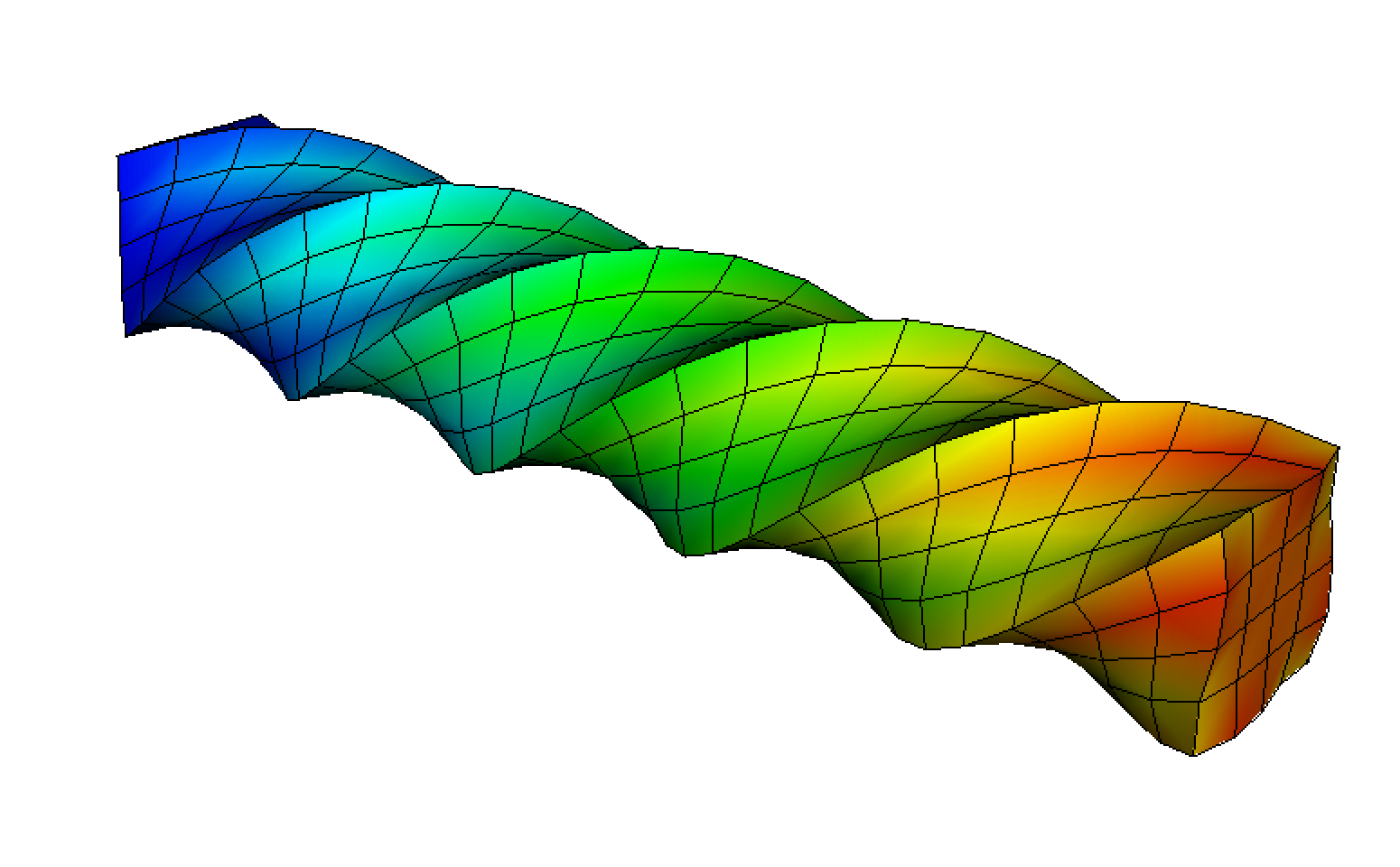}\\

&& $d^{\text{s}}_2$ &\\
&& \includegraphics[width=.3\linewidth]{Figures/color_bar.pdf}&\\
&&\hspace{0.025\linewidth}$0.0\ \text{cm}$ \hspace{.225\linewidth} $0.35\ \text{cm}$ &
\end{tabular}
\caption{Deformations of the torsion benchmark and $y$-displacement for both elemental and nodal coupling at different points in time.
  Time values are start of the simulation, (roughly) $0.5T_\text{l}$, and $T_\text{f}$.
  In both cases the structures are discretized with $\Qone$ elements and use $\mfac = 1.0$.}
\label{fig:tt_def}
\end{figure}

\begin{figure}
\begin{tabular}{c c c c}
$\Pone$ & $\Qone$ & $\Ptwo$ & $\Qtwo$ \\
\includegraphics[width=.22\linewidth, trim={0 150 50 100}, clip]{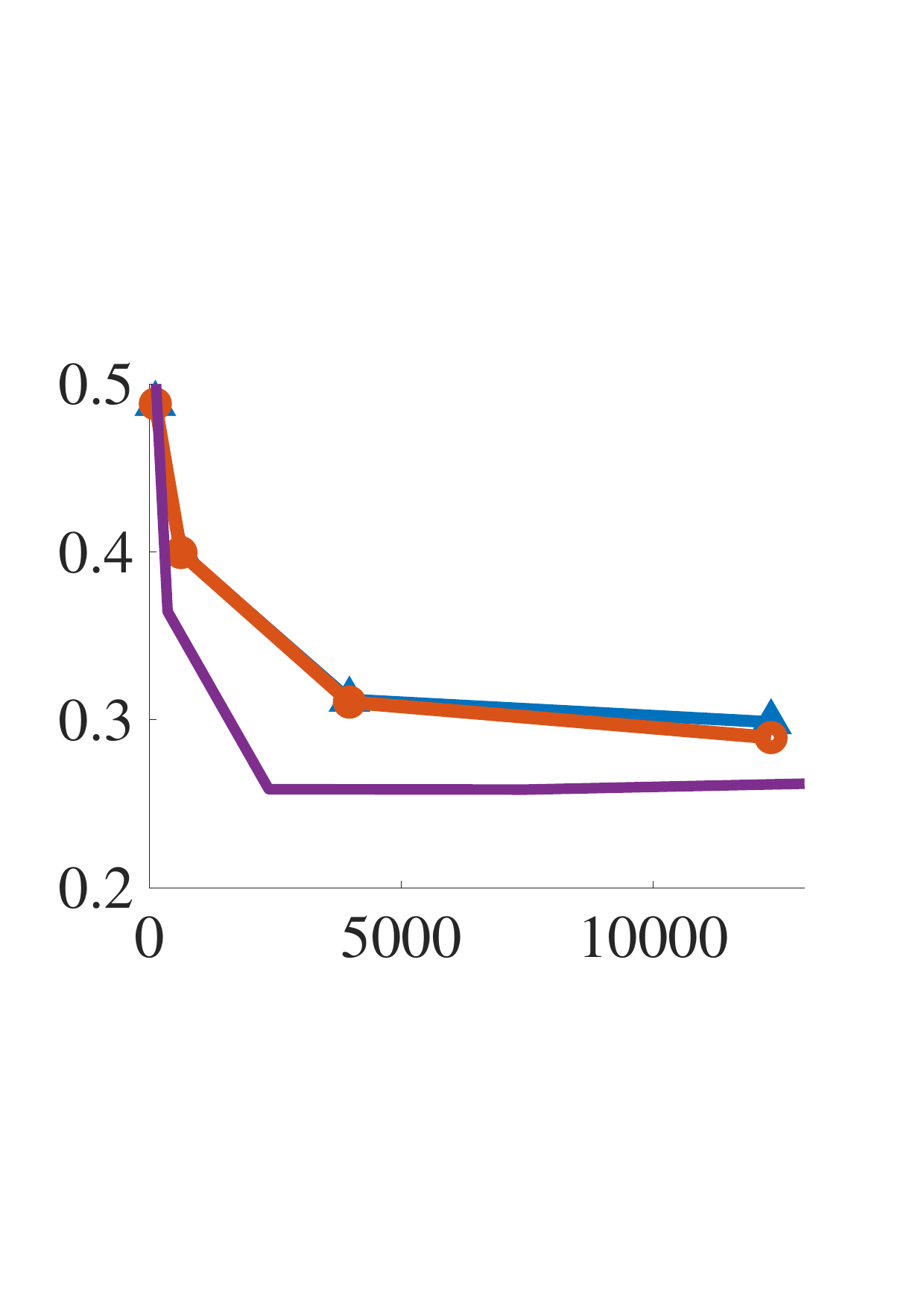}&
\includegraphics[width=.22\linewidth, trim={0 150 50 100}, clip]{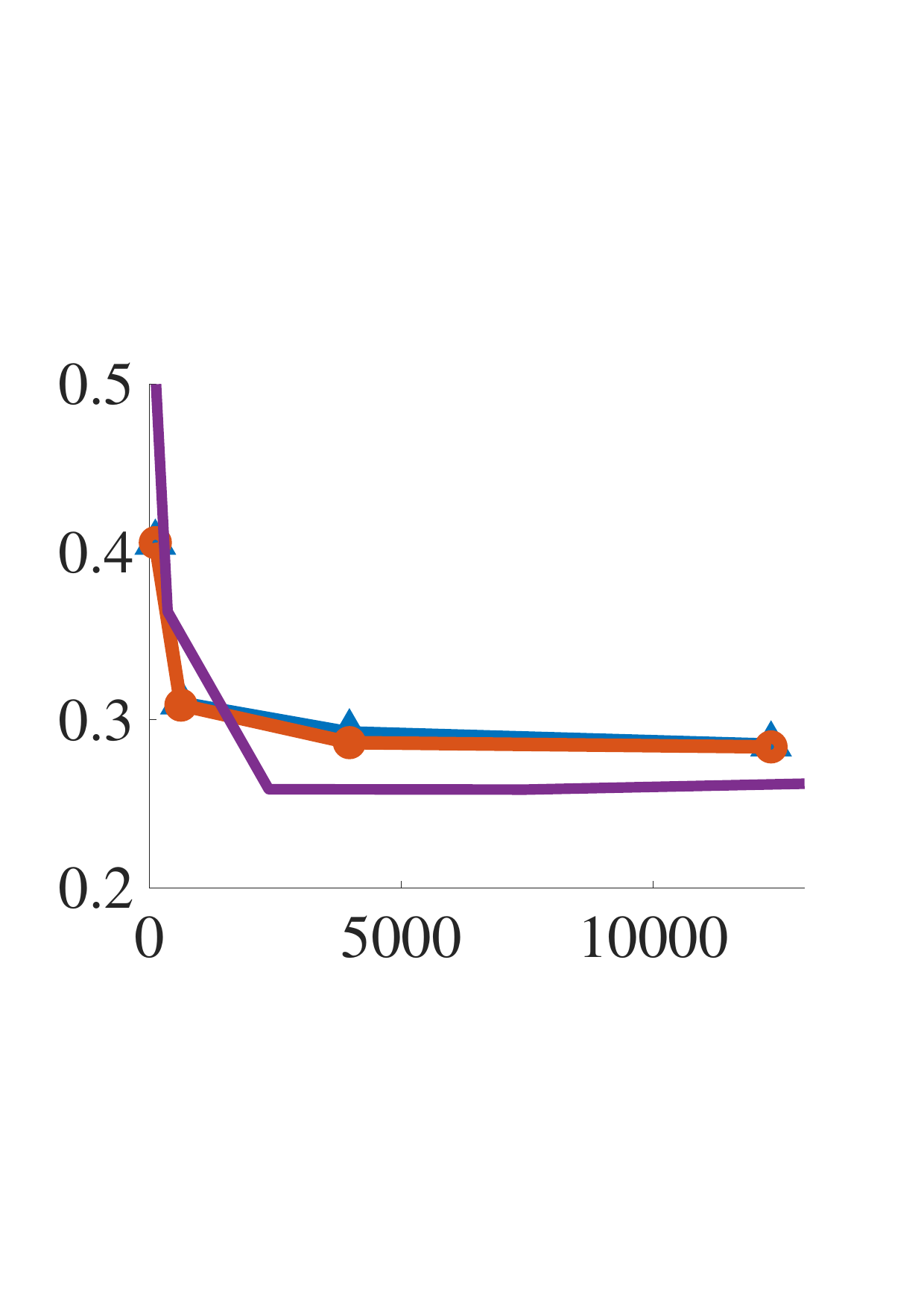}&
\includegraphics[width=.22\linewidth, trim={0 150 50 100}, clip]{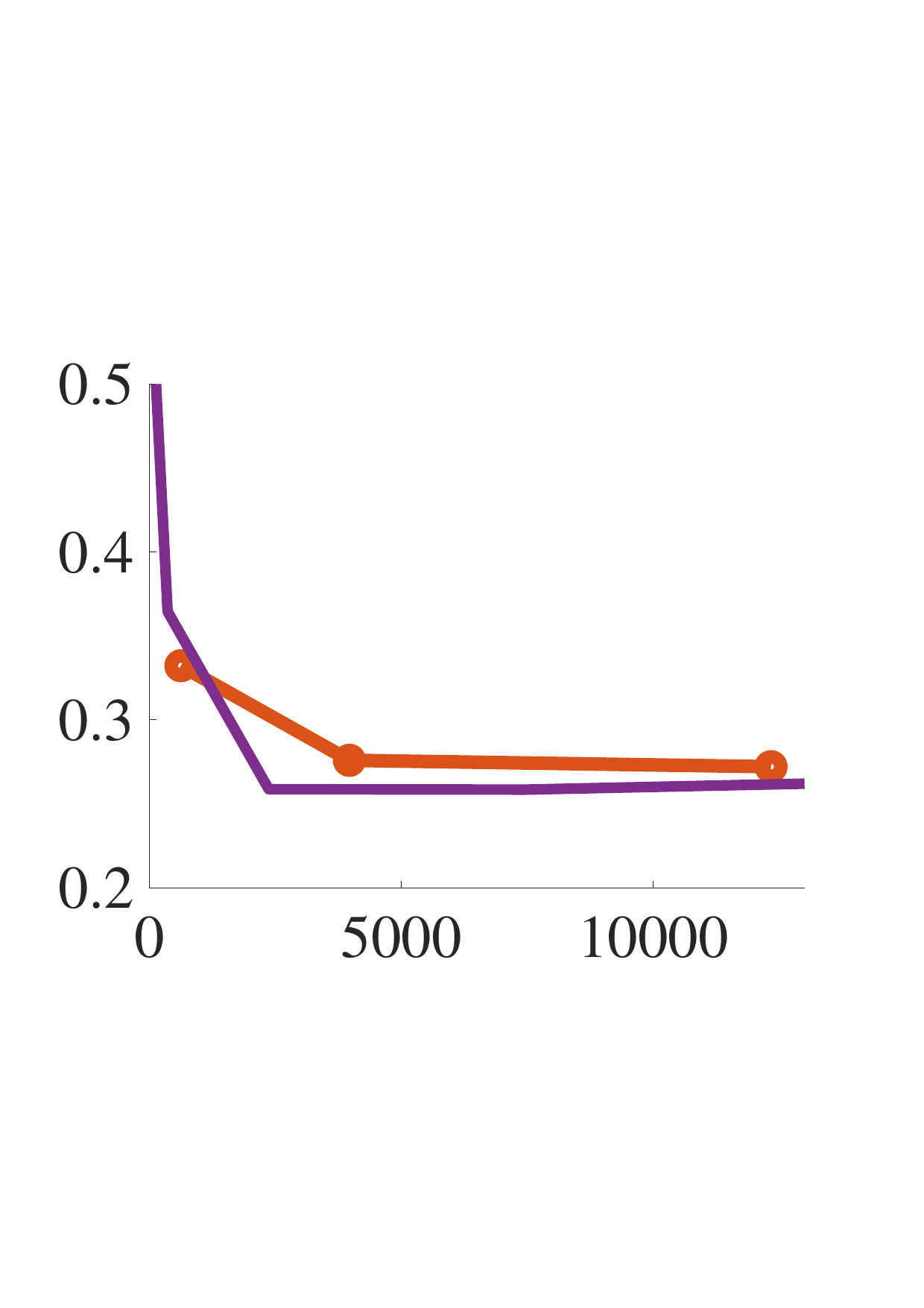}&
\includegraphics[width=.22\linewidth, trim={0 150 50 100}, clip]{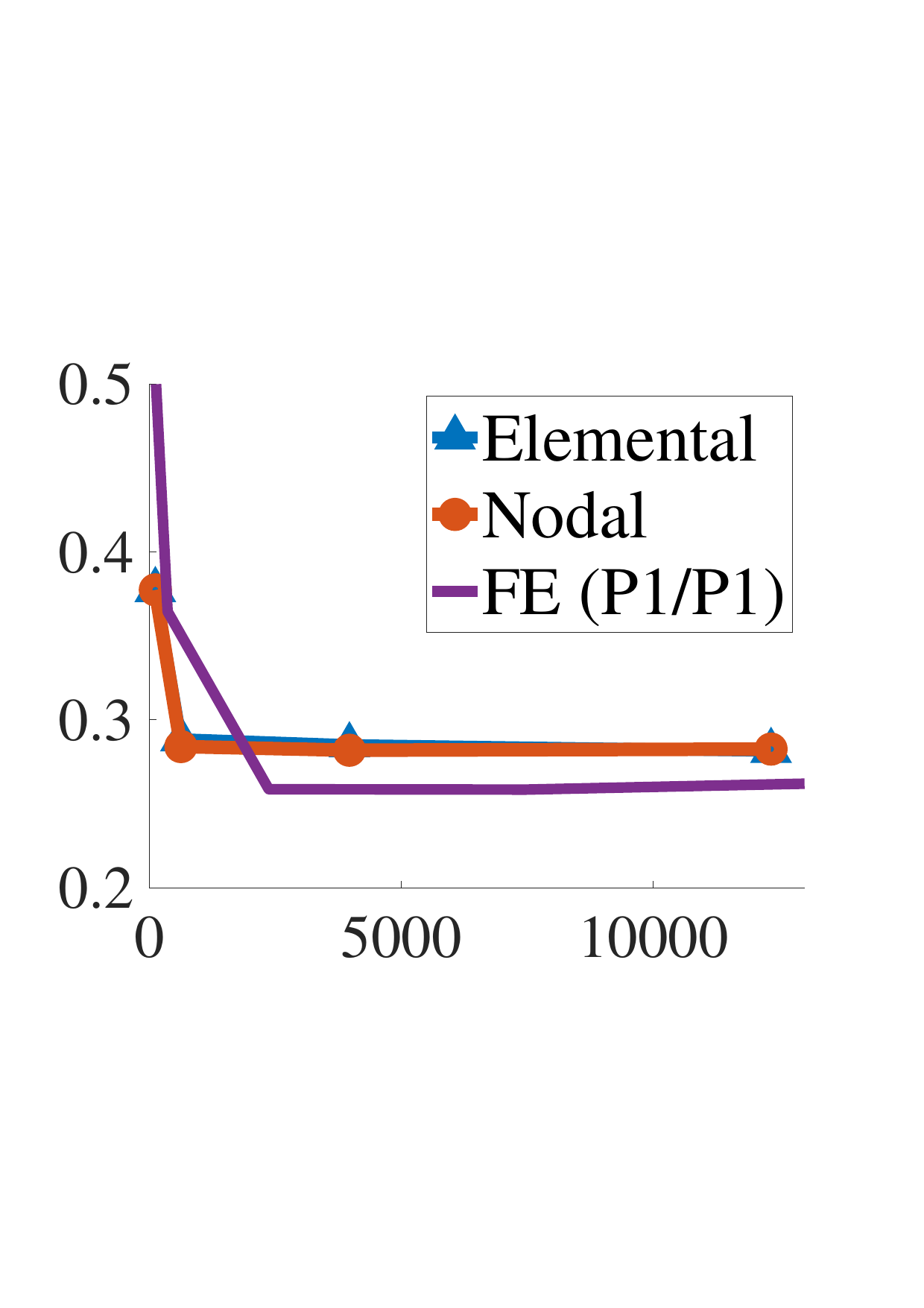}\\

\end{tabular}
\caption{The $y$-displacement ($y$-axis) in cm vs solid DoF count ($x$-axis) of the torsion benchmark using different coupling strategies and $\mfac = 1.0$ values.
 Note for the $\Ptwo$ case that the results with elemental coupling are omitted because these cases demonstrated severe time-step
  restrictions.}
\label{fig:tt_disp_bs3}
\end{figure}

\subsubsection{Hessenthaler's Three-dimensional FSI Benchmark}
\label{FSI-benchmark}
This benchmark involves the FSI of an elastic beam and enclosure geometry, depicted in Figure~\ref{fig:fsi}, with two parabolic inflow regions and one larger outflow region.
It was first introduced as an experiment and then used as a benchmark for a monolithic arbitrary Lagrangian-Eulerian (ALE) FSI scheme by Hessenthaler \etal~\cite{Hessenthaler2017a, Hessenthaler2017b}.
We focus on the ``Phase I'' part of the benchmark, which is a steady state benchmark.
The elastic beam is less dense than the surrounding fluid and is thus subject to upward buoyancy forces.
To adapt the problem to the IB framework, the enclosure and beam are placed in a cube-shaped computational domain $[0,L]\times [L/2,L/2] \times [0,L]$, in which $L = 11.9$ cm.
The structure is an elastic beam with an incompressible modified neo-Hookean material model.
The Cartesian grid uses $N = 24$ or $N = 32$ cells in each coordinate direction and $3$ levels with a refinement ratio of $2$, resulting in an effective fine-grid spacing of $4 N$.

The time step size is $0.02 \euleriandx / 63.0 \ \text{s}$ in which $0.02$ is a CFL-like stability restriction and $63.0 \frac{\text{cm}}{s}$ is the largest anticipated velocity.
Additionally, the enclosure has a thickness of $w = 0.1$ cm, whereas the original problem featured an enclosure with no thickness (although the 3D printed enclosure in the experiment had thickness of $w = 29 \, \mu \text{m}$~\cite{Hessenthaler2017a}).

The inflow boundary conditions were unidirectional in the z-direction and had peak values given by
\begin{equation}
    u_3 =
    \begin{cases}
        63.0 \, \frac{\text{cm}}{\text{s}}\cdot (12t^2 - 16t^3) & y > 0, t < 0.5 \, \text{s}, \\
        61.5 \, \frac{\text{cm}}{\text{s}}\cdot (12t^2 - 16t^3) & y < 0, t < 0.5 \, \text{s}, \\
        63.0 \, \frac{\text{cm}}{\text{s}} & y > 0, t \geq 0.5 \, \text{s}, \\
        61.5 \, \frac{\text{cm}}{\text{s}} & y > 0, t \geq 0.5 \, \text{s},
    \end{cases}
   \label{eq:fsi-bc}
\end{equation}
Zero fluid traction is applied on the outflow region.
Because of fluid incompressibility, fluid will strictly flow out of the domain at this boundary.
To correct for any spurious inflow we use a penalty technique introduced by Bodony~\cite{Bodony2006}.
The original work uses techniques described in the work of Bazilevs \etal~\cite{Bazilevs2009}, in which a fluid traction is imposed to counteract flow into the domain at the outflow boundary.
Zero velocity boundary conditions are specified for the remaining components and boundaries.

In the original work, different densities are used for the solid and fluid, with the fluid being denser than the solid (see Table~\ref{tb:fsi-param}).
This leads to a buoyancy body force of the form $\Fb_{\text{g}} = (\rhos - \rhof) \cdot (0,g,0)$, in which $\rhos$ is the solid density, $\rhof$ is the fluid density, and $g = -980.665 \, \frac{\text{cm}}{\text{s}^s}$ is the acceleration due to gravity.
Figure~\ref{fig:fsi} shows the benchmark specifications.
Damping parameters for the interior of the structural domains are $\eta_{\text{B}_1} = \frac{\rhos}{\Delta t} \frac{\text{g}}{\text{cm}^3\cdot\text{s}}$ for the elastic beam and $\eta_{\text{B}_2} = 0.002\cdot\frac{\rhos}{\Delta t} \frac{\text{g}}{\text{cm}^3\cdot\text{s}}$ for the rigid enclosure.
The penalty parameter for the body force to enforce rigidity of the enclosure is $\kappa_{\text{B}} = 30.0 \cdot \frac{\Delta x}{\left(\Delta t\right)^2} \frac{\text{dyn}}{\text{cm}^4}$, and the penalty parameter for the surface traction to fix the beam to the enclosure is $\kappa_{\text{S}} = 0.3 \cdot \frac{\Delta x}{\left(\Delta t\right)^2}\frac{\text{dyn}}{\text{cm}^3}$.
We gradually increase the parabolic inflow velocity in time according to Equation \eqref{eq:fsi-bc} so that the full velocity is applied at $T_{\text{l}} = 0.5$ s, and we wait until time $T_\text{f} = 30.0$ s for the structure to reach equilibrium.
We use a constant density fluid solver with density $\rhof$, in which the momentum term $\rhof \frac{D\ub}{Dt}(\xb,t)$ in equation \eqref{eq:ns} accounts for the momentum of whichever material is present at $\xb$, and only account for the difference in densities through the buoyancy force.

Figures~\ref{fig:fsi_def} and \ref{fig:fsi_disp} depict this benchmark's results.
The elementally coupled results use $\mfac = 2.0$ and the nodally coupled results use $\mfac = 1.0$.
We report two different Cartesian discretizations for this case since nodal coupling is generally more computationally efficient than elemental coupling.
In Figure~\ref{fig:fsi_disp}, notice that the nodal case with $N=32$, corresponding to $N=32$ cells in each dimension on the coarsest grid level, shows very good agreement with the experimental results.
Both the elemental and nodal cases with $N=24$ yield small deviations from the experimental results, with the elemental results being slightly closer to the experimental data.

\begin{table}
\centering
\begin{tabular}{| c | c | c | c | }
\hline
Fluid density & $\rhof$ & $1.1633$ & $\frac{\text{g}}{\text{cm}^3}$\\
\hline
Solid density & $\rhos$ & $1.058.3$ & $\frac{\text{g}}{\text{cm}^3}$\\
\hline
Viscosity & $\mu$ & $0.125$ & $\frac{\text{dyn} \cdot \text{s}}{\text{cm}^2}$ \\
\hline
Material model & - & modified neo-Hookean& - \\
\hline
Shear modulus & $G$ & $610,000$ & $\frac{\text{dyn}}{\text{cm}^2}$  \\
\hline
Numerical bulk modulus & $\kappas$ & $248{,}666.667$
& $\frac{\text{dyn}}{\text{cm}^2}$\\
\hline
Final time & $T_\text{f}$ & $30.0$  & s\\
\hline
Load time & $T_{\text{l}}$ & $0.5$ & s \\
\hline
\end{tabular}
\caption{
Parameters for Hessenthaler's FSI benchmark.
Note that the $\rhof$ is used for inertial terms (e.g., $\rhof \frac{D\ub}{Dt}$) for both the solid and fluid, and the density $\rhos$ is only used for the buoyancy force term $(\rhos - \rhof)g$ in the solid region.
}
\label{tb:fsi-param}
\end{table}

\begin{figure}
\centering
\begin{tabular}{c c c}
\includegraphics[width=.35\linewidth]{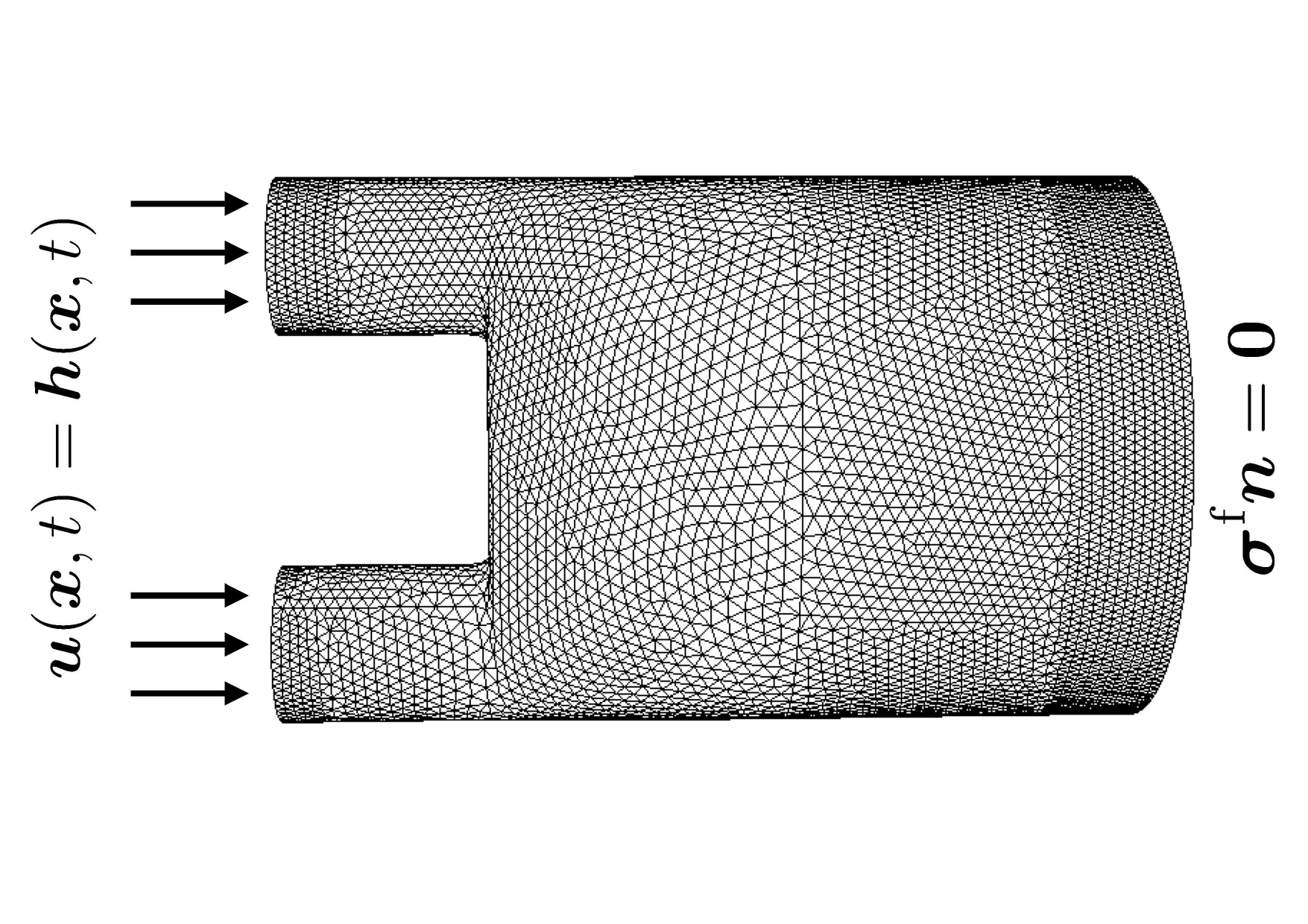} &
\includegraphics[width=.35\linewidth]{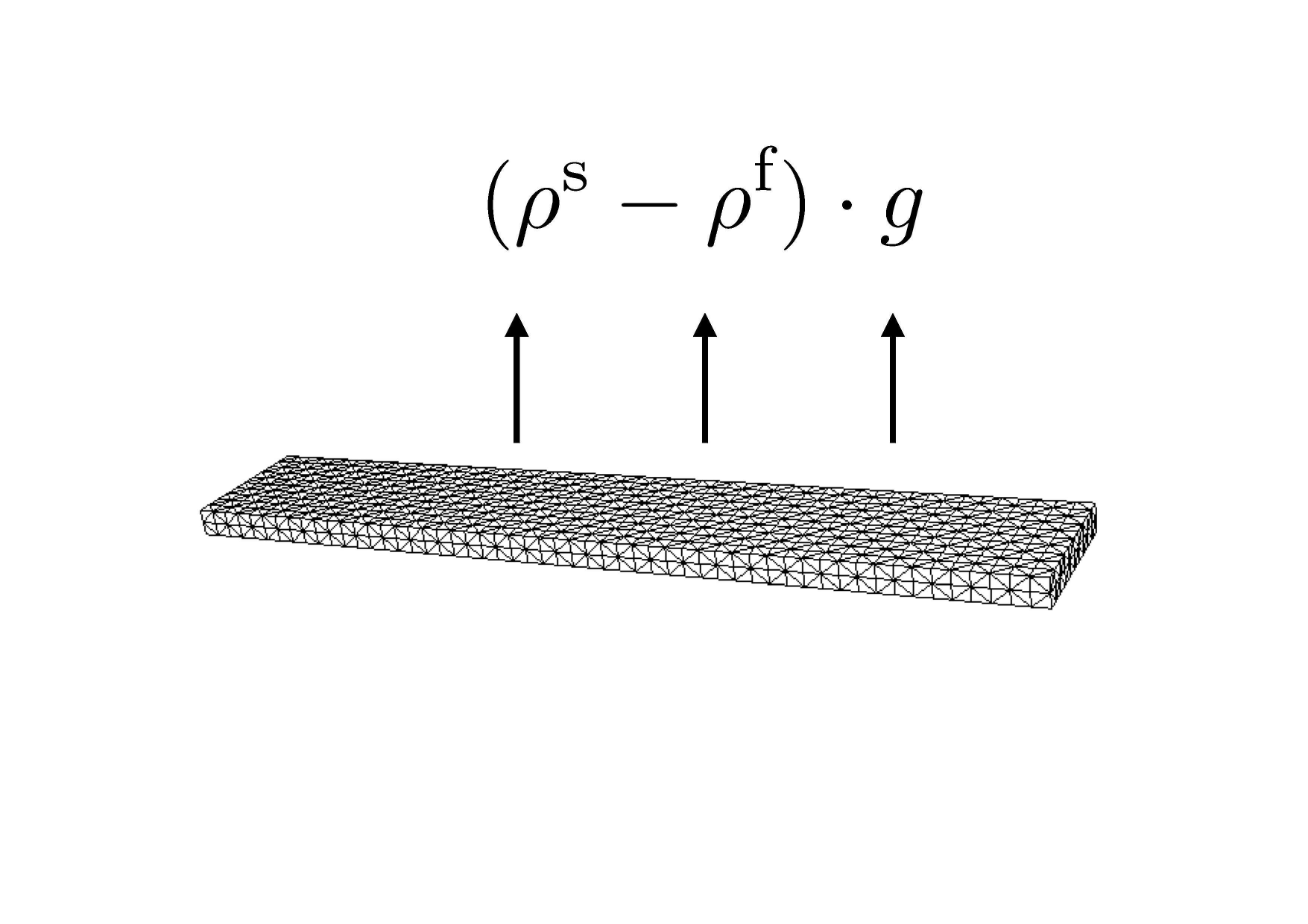} &
\includegraphics[width=.25\linewidth]{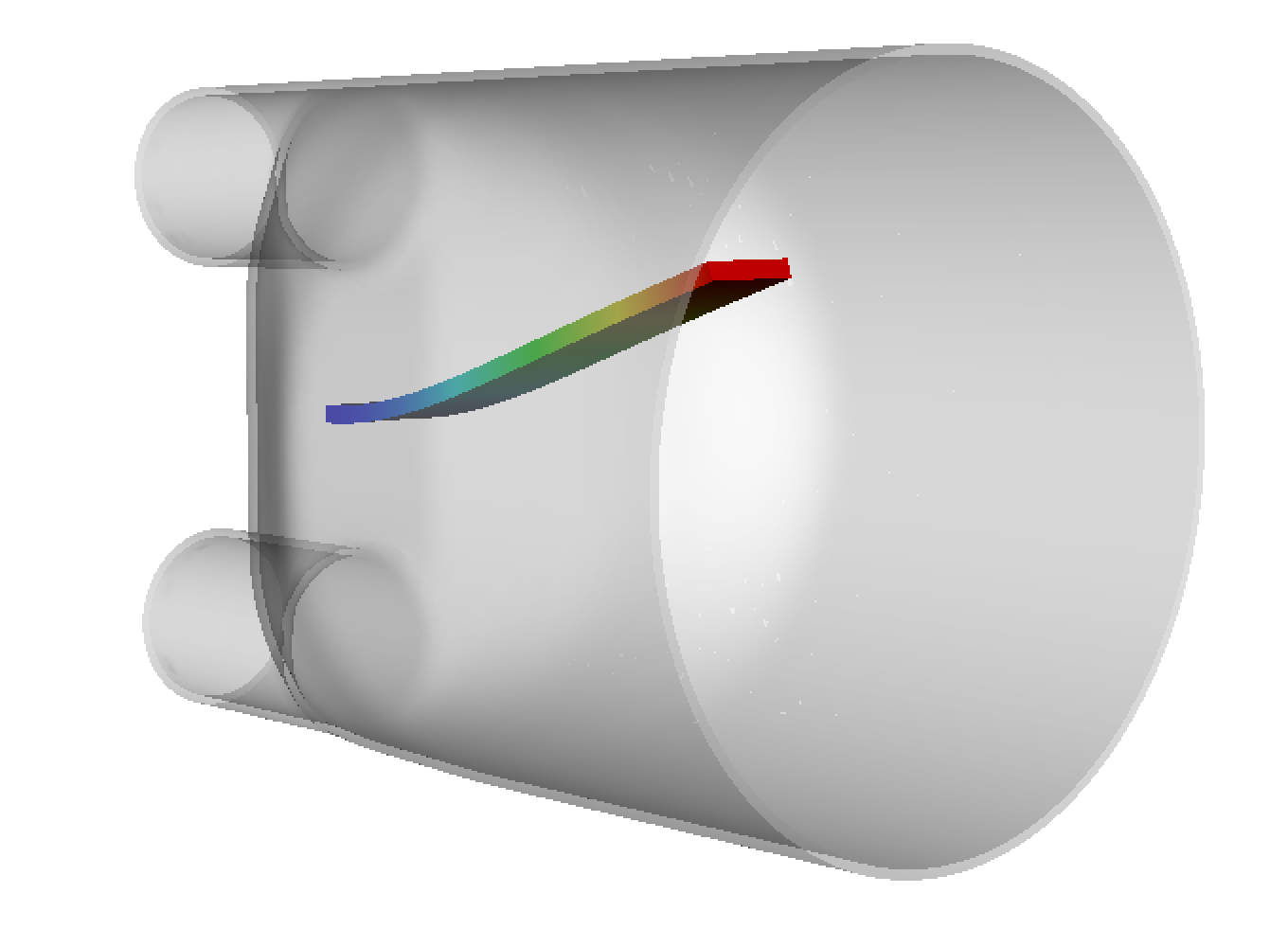} \\
(a) & (b) & (c)\\
\end{tabular}
\caption{Panels (a) and (b) provide the specifications of Hessenthaler's FSI benchmark.
  The parabolic inflow profiles are applied to the two circle regions on the left, and the larger region on the right is the outflow region.
  Panel (c) shows the deformation of the elastic beam.}
\label{fig:fsi}
\end{figure}

\begin{figure}
\centering
\begin{tabular}{l c c c}
&$t = 0\ \text{s}$ & $t = 0.75\ \text{s}$ & $t = 30\ \text{s}$  \\
\rotatebox{90}{$\quad\;\;$ Elemental}&
\includegraphics[width=.25\linewidth]{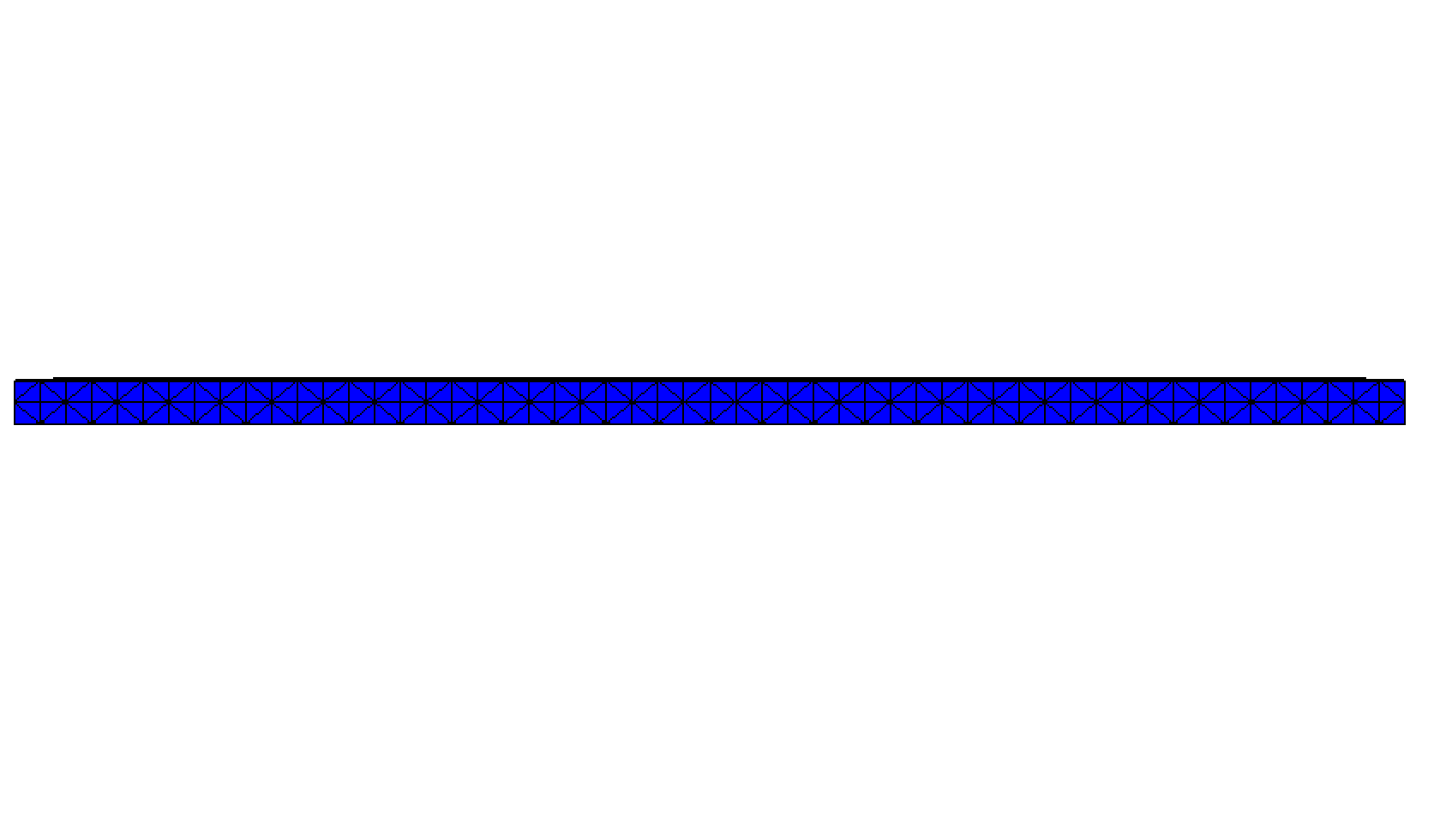}&
\includegraphics[width=.25\linewidth]{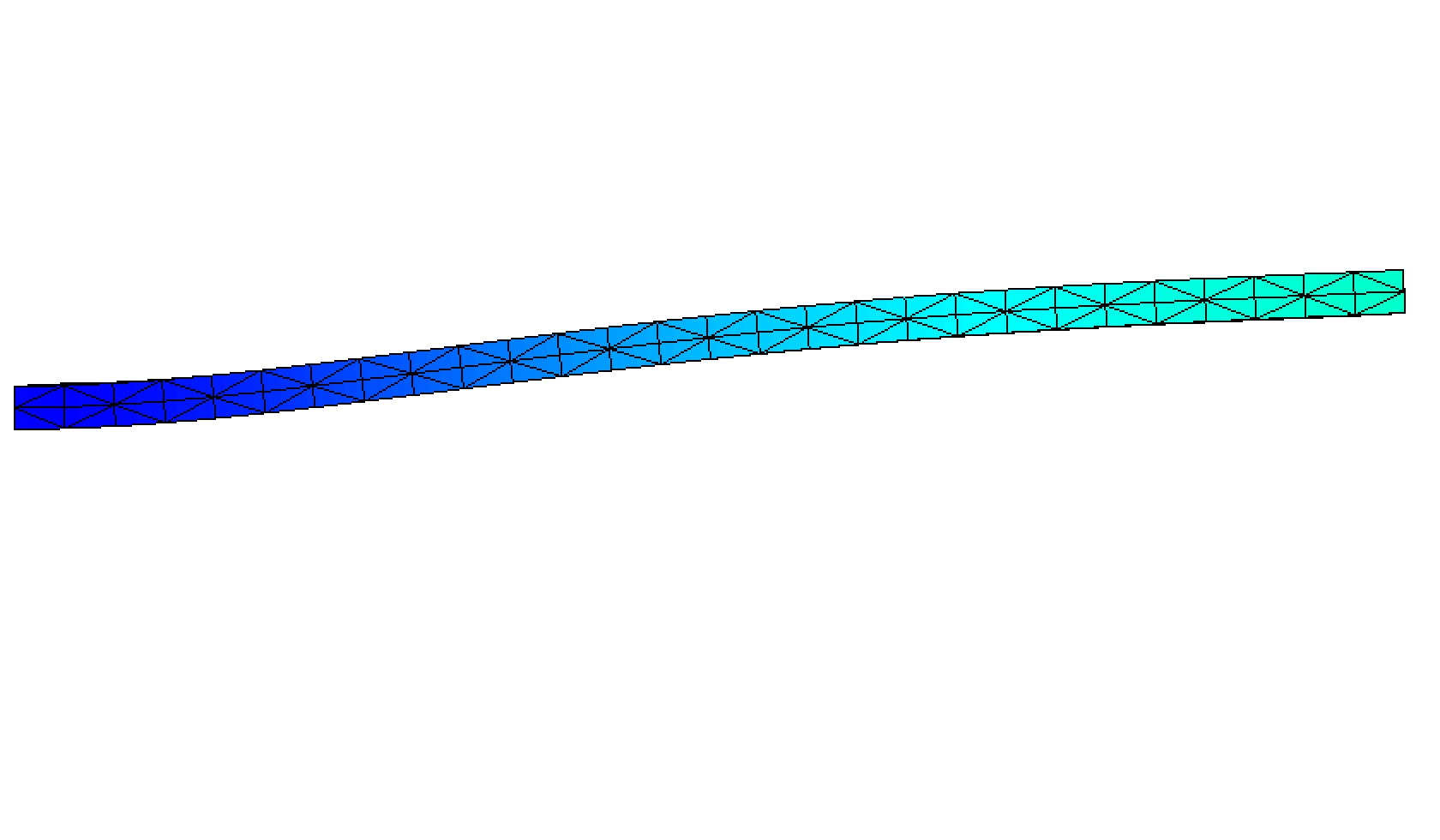}&
\includegraphics[width=.25\linewidth]{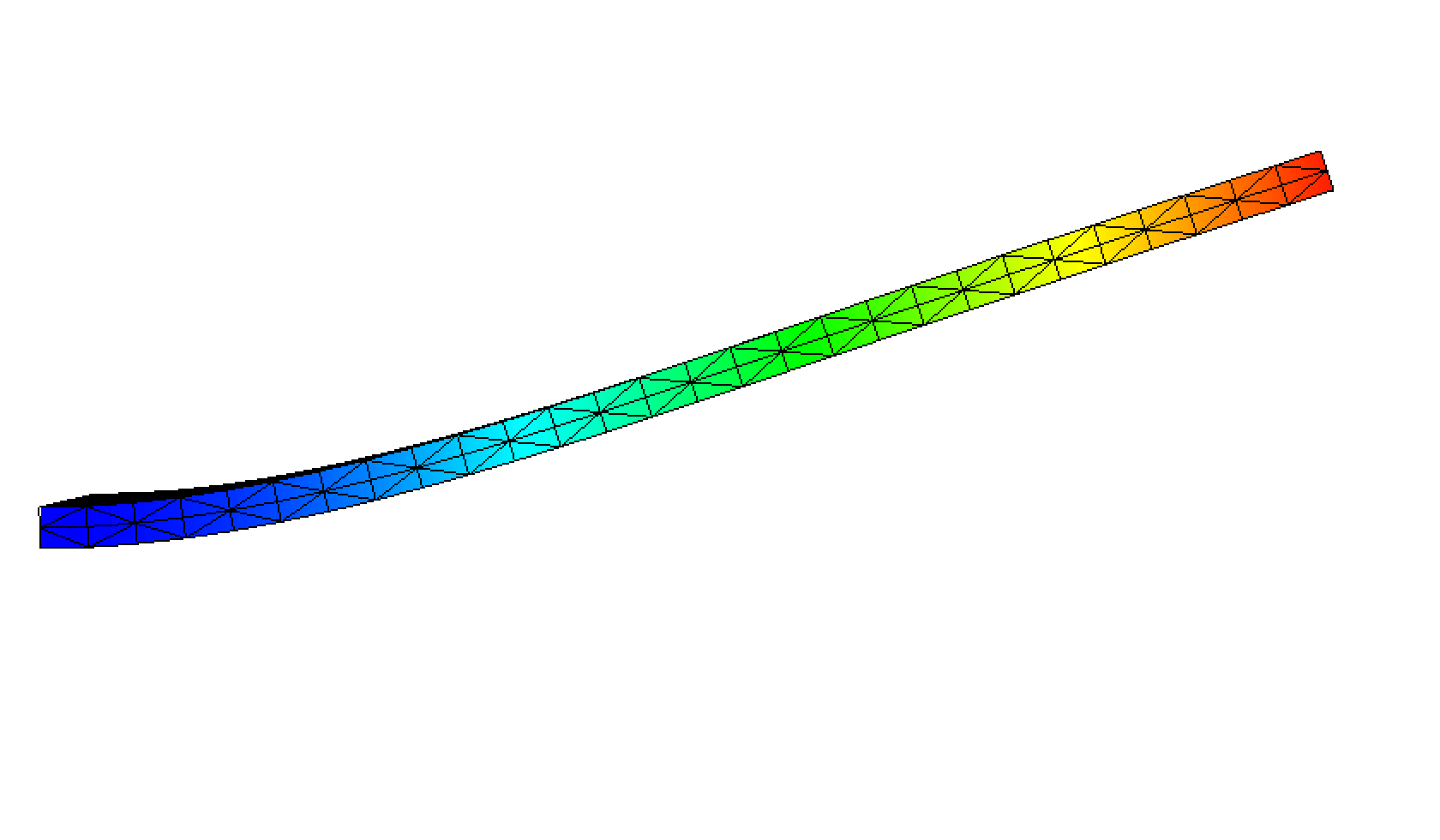}\\

\rotatebox{90}{$\quad\qquad$ Nodal}&
\includegraphics[width=.25\linewidth]{Figures/fsi/fsi_nodal_t=0.png}&
\includegraphics[width=.25\linewidth]{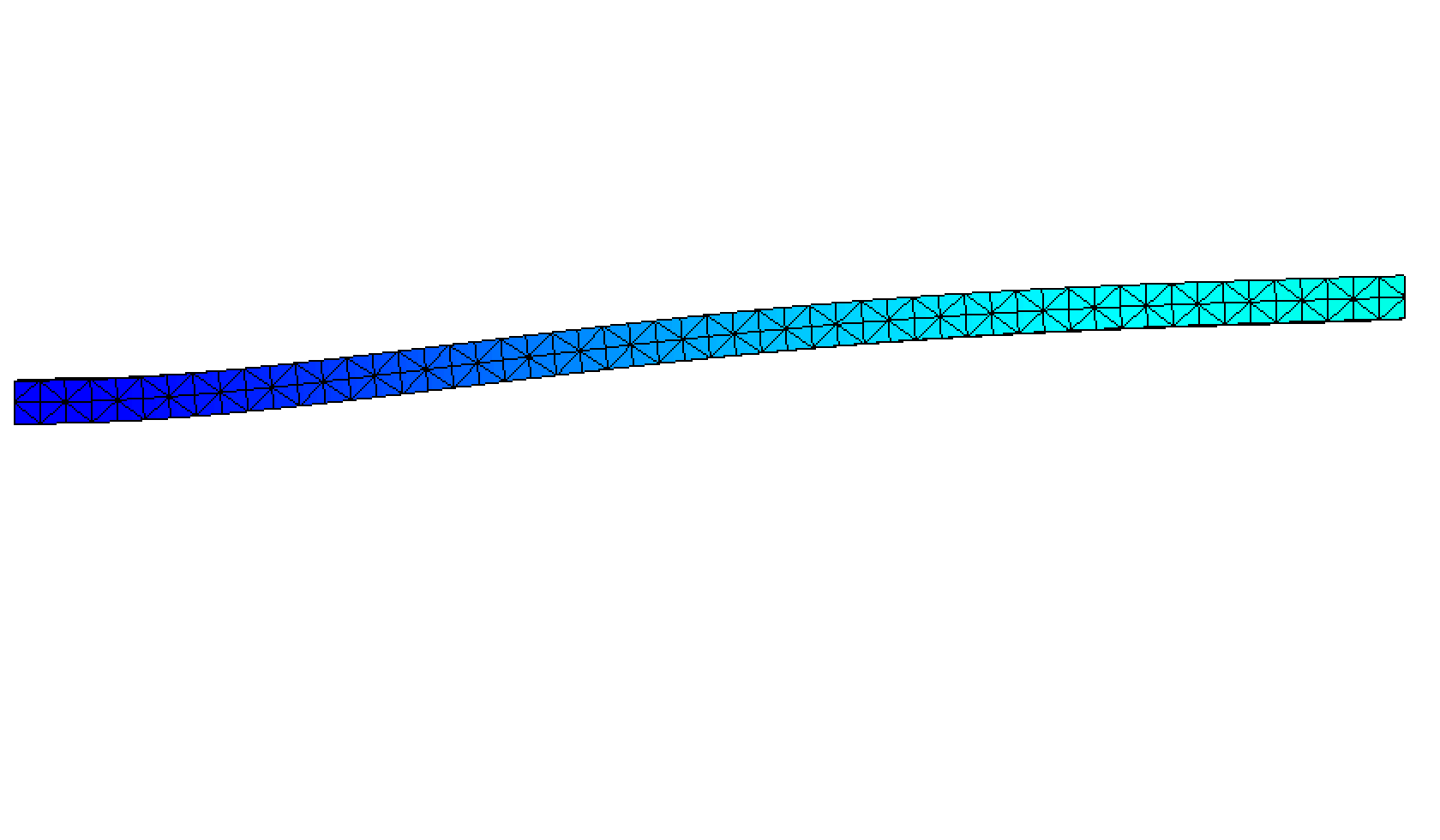}&
\includegraphics[width=.25\linewidth]{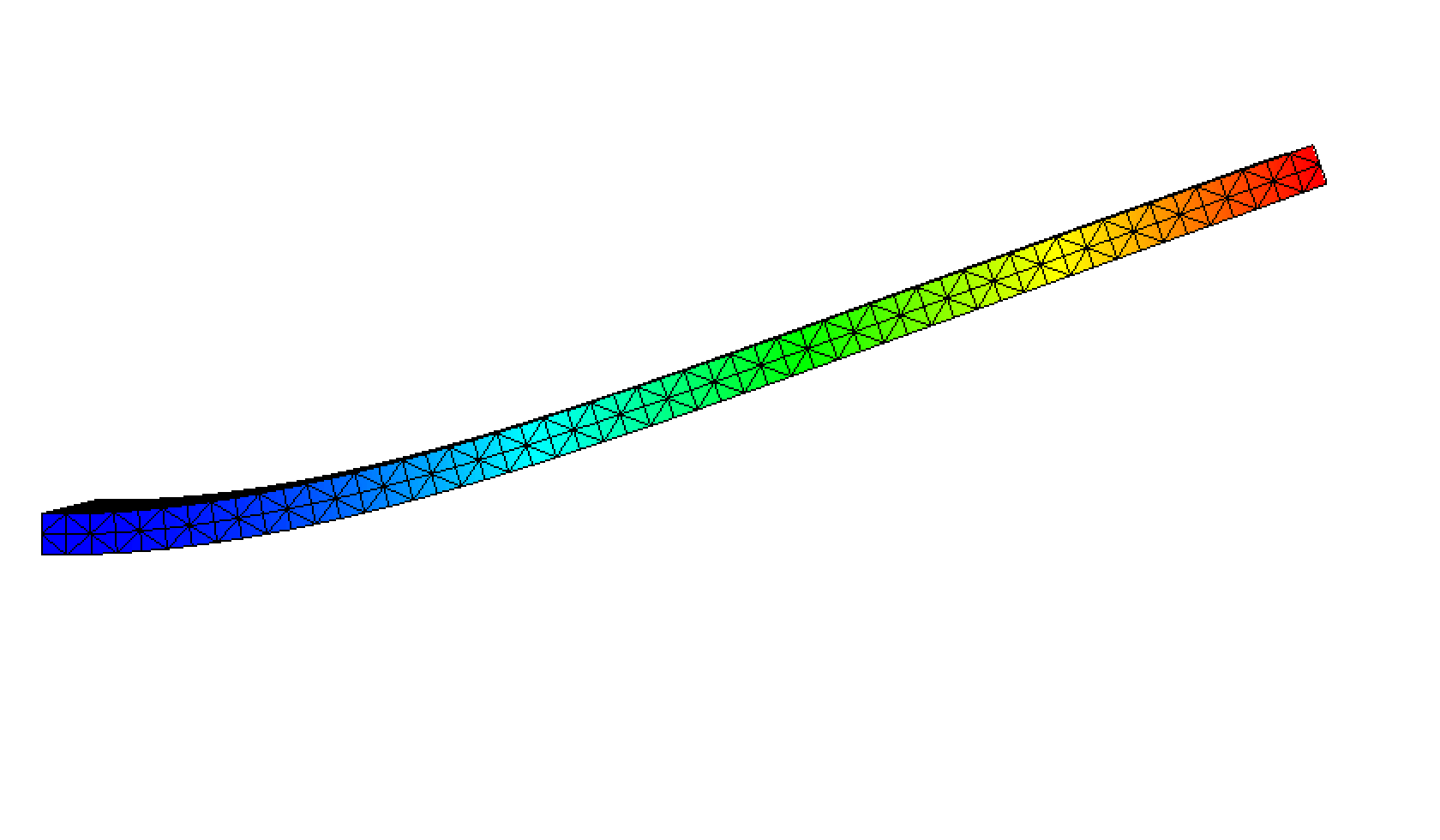}\\

&& $d^{\text{s}}_2$ &\\
&& \includegraphics[width=.3\linewidth]{Figures/color_bar.pdf}&\\
&&\hspace{0.025\linewidth}$0.0\ \text{cm}$ \hspace{.225\linewidth} $1.8\ \text{cm}$ &
\end{tabular}
\caption{Deformations and $y$-displacement of Hessenthaler's FSI benchmark with both elemental and nodal coupling at different points in time.
  Time values are start of the simulation, $1.5T_\text{l}$, and $T_\text{f}$.
  In both cases, the beams are discretized with $\Qtwo$ elements (however, the elements look like tetrahedra due to a visualization artifact).
  The nodal case uses $\mfac = 1.0$ and the elemental case uses $\mfac = 2.0$.
  Both use the same Cartesian grid with $N=24$ cells in each dimension on the coarsest AMR level.}
\label{fig:fsi_def}
\end{figure}

\begin{figure}
\centering
\begin{tabular}{l c}
\rotatebox{90}{$\qquad\qquad y-\text{displacement (mm)}$} &
\includegraphics[width=.45\linewidth, trim={50 200 50 100}, clip]{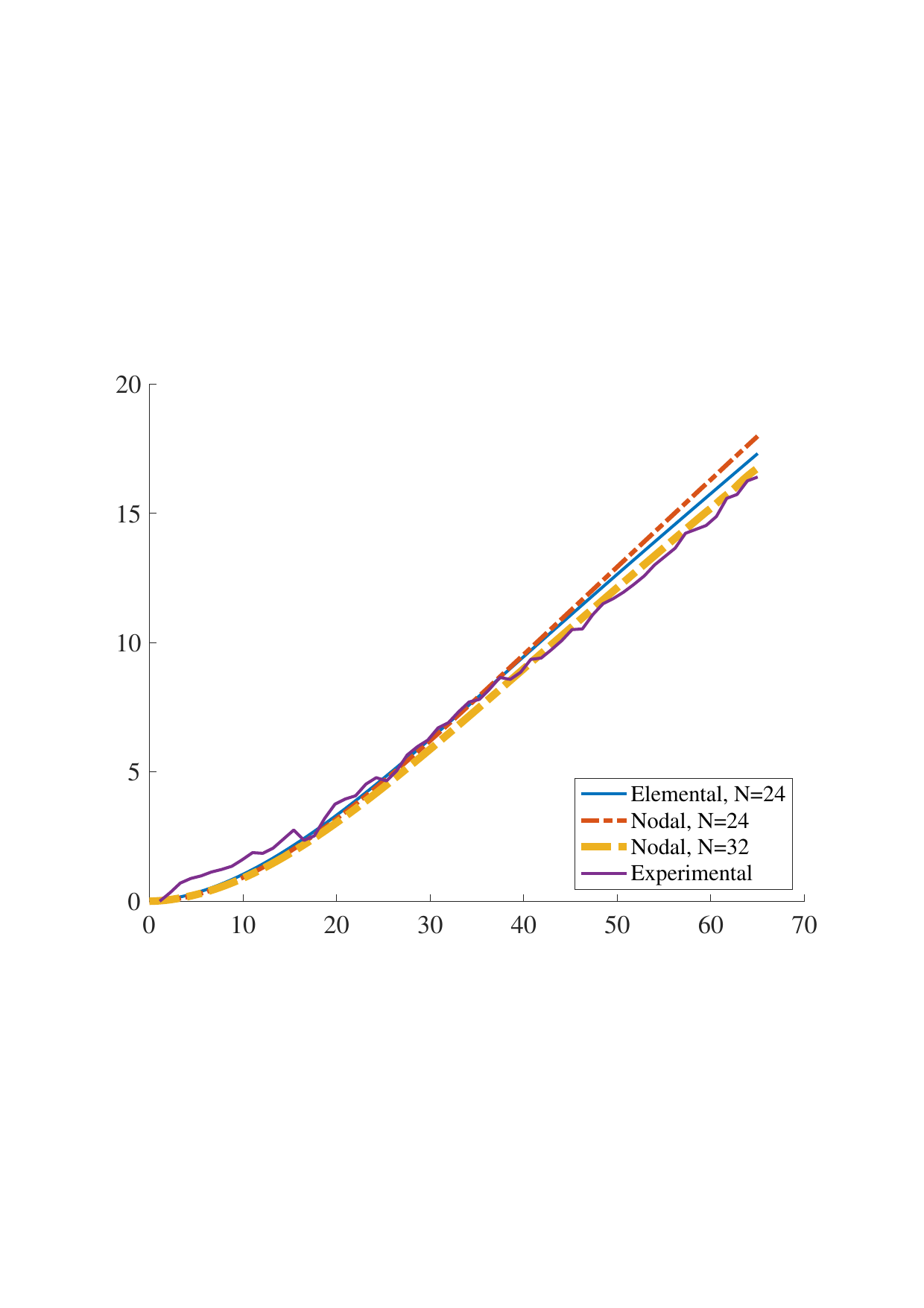}\\
&$z-\text{position (mm)}$
\end{tabular}

\caption{The displacement of the centerline of the elastic beam in Hessenthaler's FSI benchmark for $\Qtwo$ elements.
  $N=24$ corresponds to a fluid grid with three refinement levels and a coarsest level with $N=24$ cells in each dimension.
  $N=32$ is similarly defined.
  Elemental results use $\mfac = 2.0$ and nodal results use $\mfac = 1.0$.}
\label{fig:fsi_disp}
\end{figure}

\subsection{Dynamic Benchmarks}
This subsection examines several benchmarks that do not exhibit steady-state behavior.

\subsubsection{Modified Turek-Hron}
\label{Turek-Hron}
The Turek-Hron benchmark is an FSI benchmark introduced by Turek and Hron~\cite{Turek2007} and was first implemented with an IB method by Roy \etal~\cite{Roy2015}.
Contrary to both these instances, our formulation involves an incompressible fluid and structure.
Additionally, we model the beam with a modified neo-Hookean model for the elastic component and with the same viscosity model that is used for the Newtonian fluid.
This stands in contrast to the original work, which used a Saint-Venant Kirchhoff material and no viscosity model, resulting in purely elastic and compressible deformations.

The Cartesian grid is $6N \times N$ with $N = 32$ or $N = 64$ cells with $3$ levels.
Hence, the Cartesian grid has an effective fine grid resolution of $4 N$.
The time step size is $0.02 \euleriandx / 2 \ \text{s}$ in which $0.02$ is a CFL-like stability restriction and $2 \frac{\text{m}}{\text{s}}$ is the largest anticipated velocity.
The modifications used here follow those introduced in the work of Lee, in which a modified incompressible Turek-Hron benchmark was used to study the grid-dependence of IB kernel functions~\cite{LeeThesis, Lee2021}.
This benchmark involves an elastic beam that is attached to a rigid disk, and unlike the previous benchmarks, motion is driven by fluid forces rather than an external traction on the solid boundary.
We use a domain of $[0,L] \times [0, H]$, with $L = 2.46$ m and $H = 0.41$ m, and place the rigid disk so that its center is at $(2,2)$.
Table~\ref{tb:th-param} details the remaining parameters, and the configuration is depicted in Figure~\ref{fig:th}.
The penalty parameters to impose rigidity and to fix the elastic beam to the rigid cylinder are $\kappa_{\text{B}} = 3031.25 \cdot \frac{\Delta x}{\left(\Delta t\right)^2} \frac{\text{dyn}}{\text{cm}^4}$ and $\kappa_{\text{S}} =  3031.25 \cdot \frac{\Delta x}{\left(\Delta t\right)^2} \frac{\text{dyn}}{\text{cm}^3}$.
We run the simulation until time $T_\text{f} = 12.0$ s to allow for the periodic oscillations of the structure to become fully developed.

We use a parabolic inflow boundary condition with a max inflow velocity of $u_{\text{max}} = 3.0 \, \frac{\text{m}}{\text{s}}$.
This inflow condition was applied via $\hb(\xb,t) = (h_1(\xb,t), 0)$, with
\begin{equation}
    h_1(\xb,t) = u_{\text{max}} \frac{y(H - y)}{(H/2)^2} \cdot
    \begin{cases}
      \half \left(1 - \cos\left(\frac{\pi}{2} t\right) \right),
      \ t < 2 \\
      1, \ t \geq 2.
    \end{cases}
\end{equation}
This specification of the problem corresponds most closely to the ``FSI3'' case in the original work of Turek and Hron~\cite{Turek2007}, which was designed to benchmark the periodic oscillations of the immersed structure when subjected to a relatively high inflow velocity.

The rigid disk is discretized with $\Ptwo$ surface elements in all cases.
Figure \ref{fig:th_def} shows the deformations of elastic beams for both elemental and nodal coupling at three points in time.
We investigate elemental and nodal coupling with $\mfac = 0.5, 1.0,$ and $2.0$.
Tables~\ref{tb:th-mean-amp1}--\ref{tb:th-mean-amp2} show the $x$ and $y$ displacements of the encircled point in Figure \ref{fig:th} and the associated Strouhal numbers.
All results are for the $N = 64$ cases and are between times $t = 10.0$ s and $t = 12.0$ s.
$\mfac = 1.0$ yields the best agreement between the elementally and nodally coupled cases for all element types.
Both elemental and nodal coupling results stay within the same ranges with the exception of $\mfac = 2.0$.
This benchmark highlights the extent to which the elementally coupled method may be less sensitive to $\mfac$ changes than the nodally coupled method; see, for instance, the range of values in fourth columns in Tables \ref{tb:th-mean-amp1}--\ref{tb:th-mean-amp2}.
However, the difference in $\mfac$ sensitivity is less stark when we omit $\mfac = 2.0$.
This is expected because $\mfac = 2.0$ corresponds to a relatively coarser structural discretization, and, as demonstrated in Section \ref{subsec:mfac}, the nodally coupled method may start to experience gaps in the Cartesian grid force density with $\mfac \geq 2.0$ in shear-dominated cases whereas the elementally coupled method does not.

\begin{table}
\centering
\begin{tabular}{| c | c | c | c | }
\hline
Density & $\rho$ & $1{,}000$ & $\frac{\text{kg}}{\text{m}^3}$\\
\hline
Viscosity & $\mu$ & $1.0$ & $\text{Pa} \cdot \text{s}$ \\
\hline
Material model & - & modified neo-Hookean & - \\
\hline
Shear modulus & $G$ & $2.0$ & $\text{MPa}$  \\
\hline
Numerical bulk modulus & $\kappas$ & $18.78 - 75.12$
& $\text{MPa}$\\
\hline
Final time & $T_\text{f}$ & $12.0$  & s\\
\hline
Load time & $T_{\text{l}}$ & $2.0$ & s \\
\hline
\end{tabular}
\caption{Parameters for the Turek-Hron benchmark.}
\label{tb:th-param}
\end{table}

\begin{figure}
\centering
\includegraphics[width=.7\linewidth]{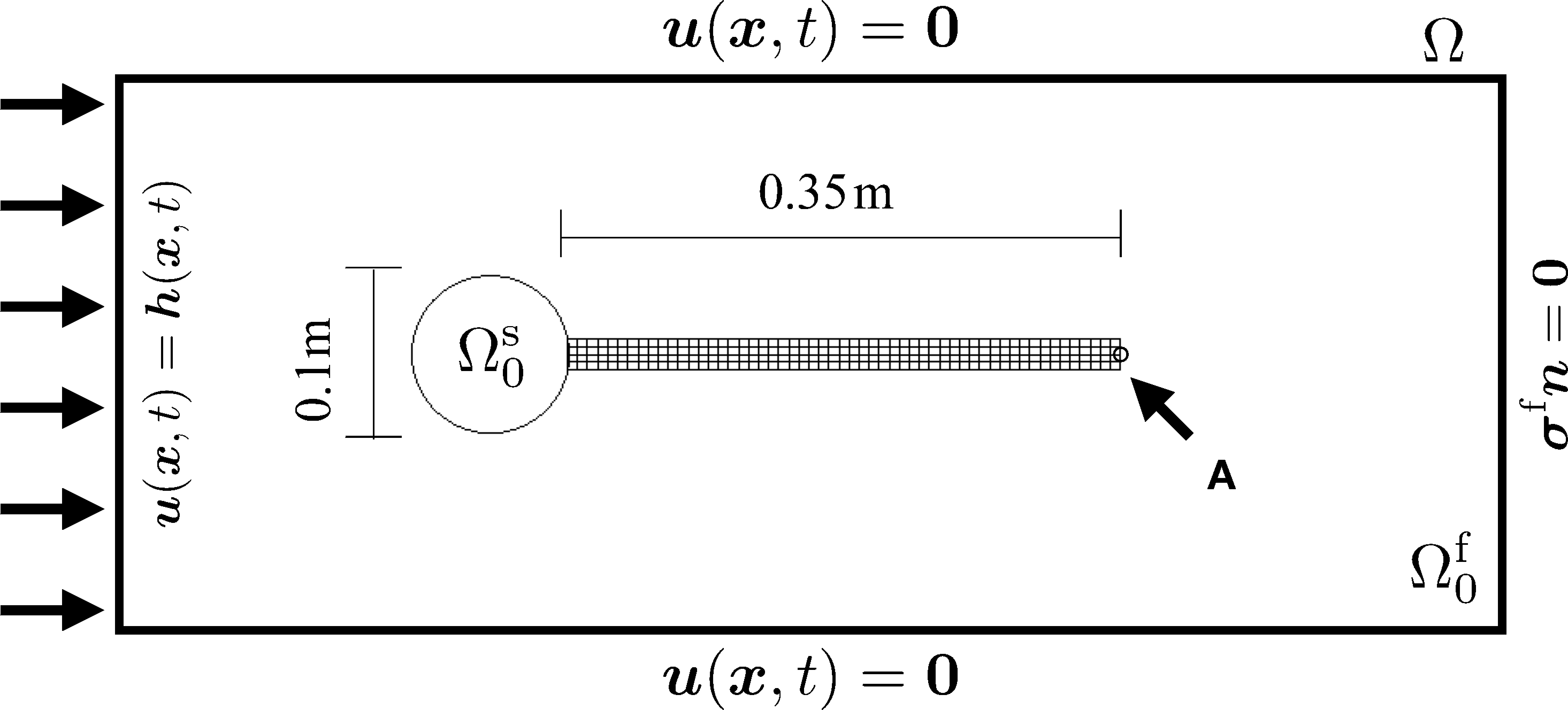}
\caption{Specifications of the Turek-Hron benchmark (Section~\ref{Turek-Hron}). Note that the elastic beam (length $0.35 \ \text{m}$) is not depicted to scale with respect to the computational domain (length $2.46 \ \text{m}$).}
\label{fig:th}
\end{figure}

\begin{figure}
\centering
\begin{tabular}{l c c c}
&$t = 0$ & $t = 6$ & $t = 12$  \\
\rotatebox{90}{$\quad\;\;$ Elemental}&
\includegraphics[width=.25\linewidth]{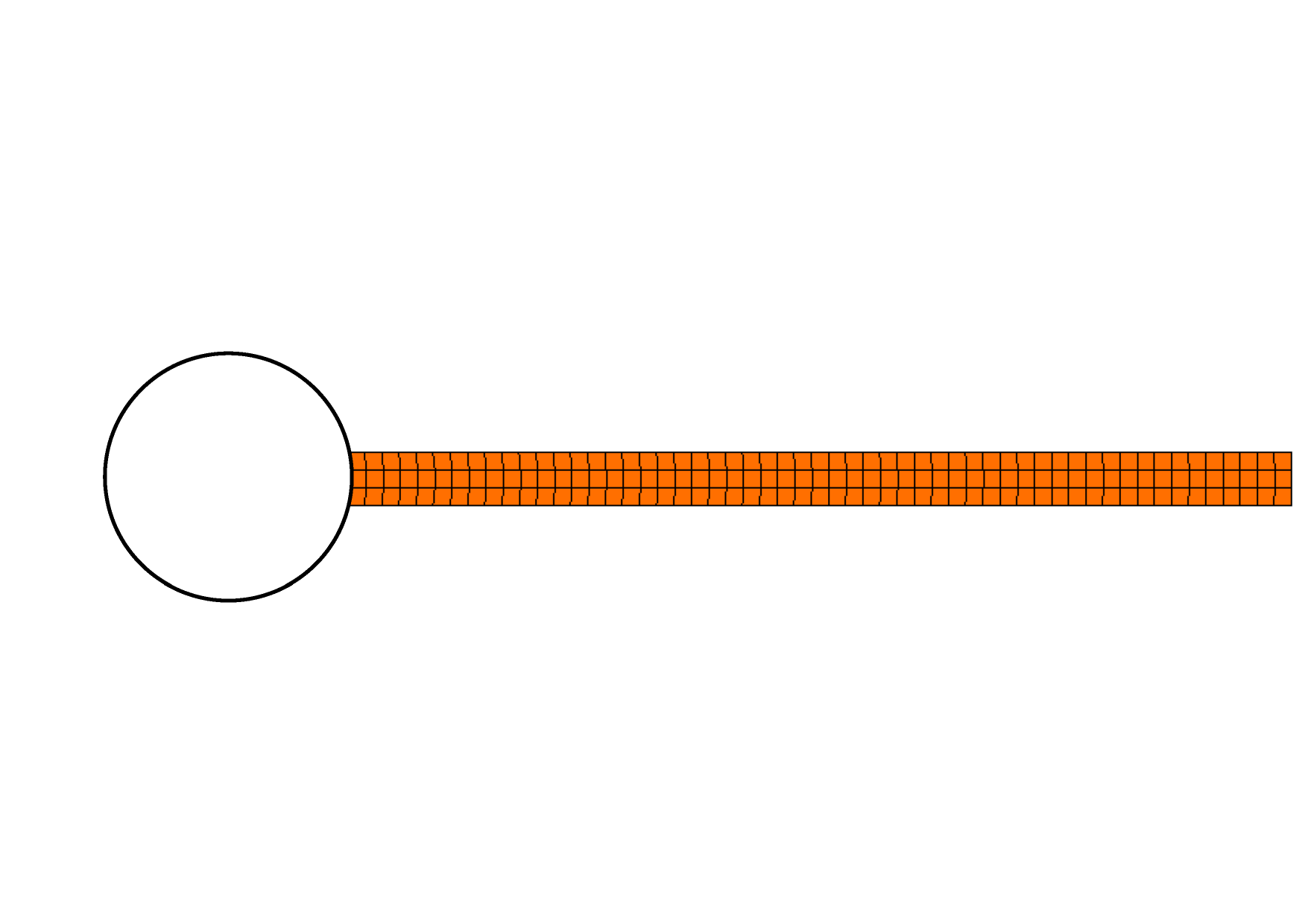}&
\includegraphics[width=.25\linewidth]{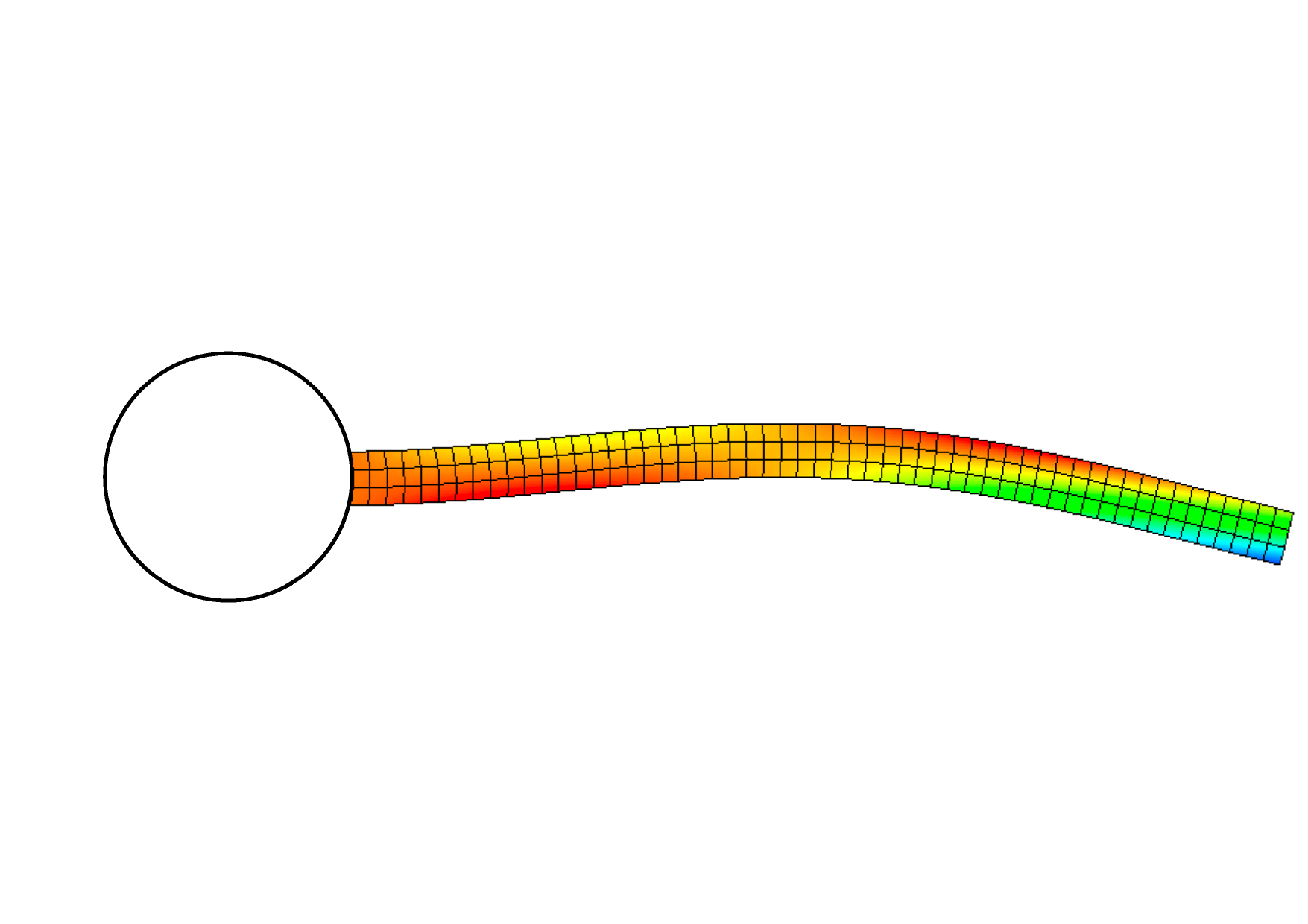}&
\includegraphics[width=.25\linewidth]{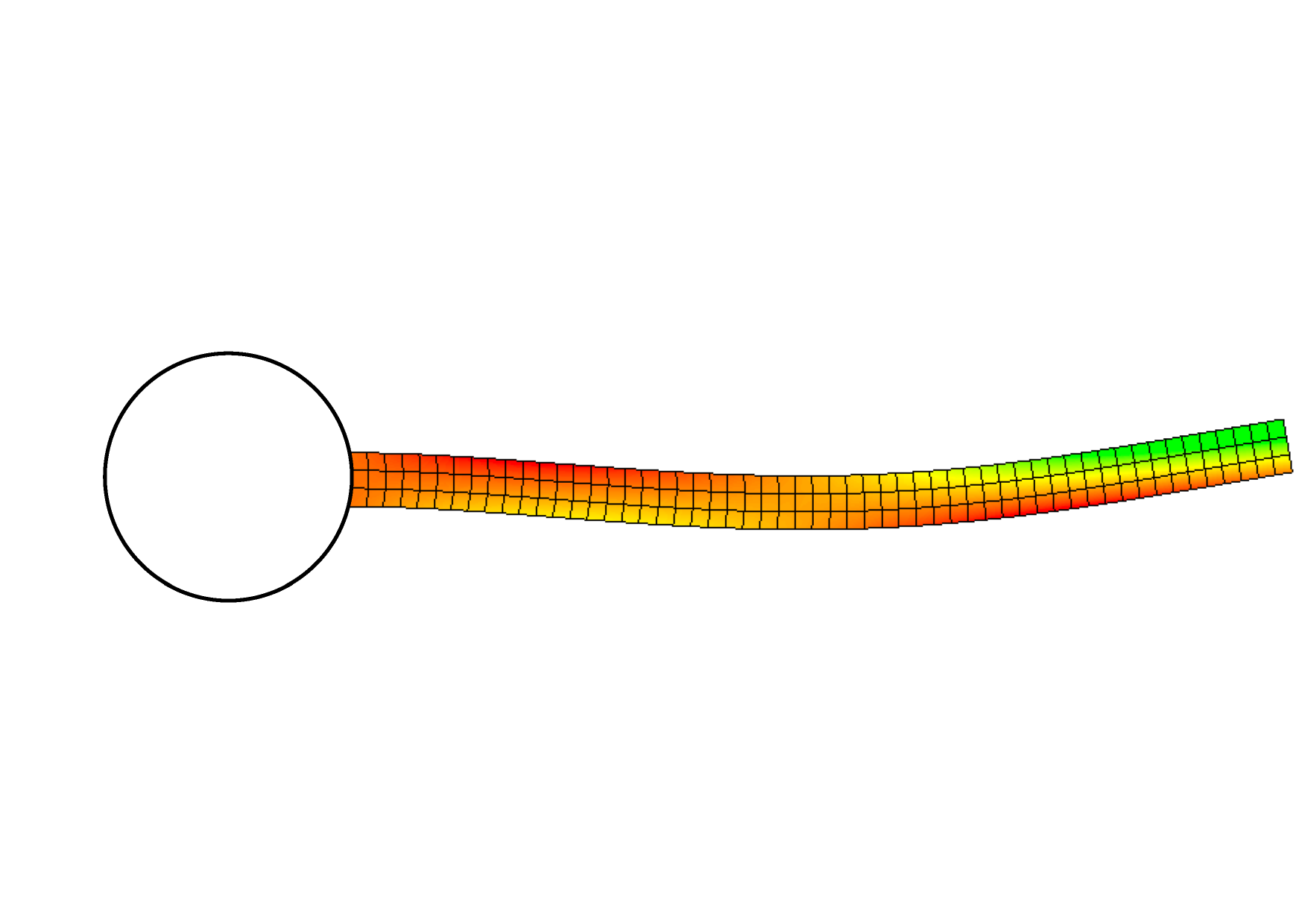}\\

\rotatebox{90}{$\quad\qquad$ Nodal}&
\includegraphics[width=.25\linewidth]{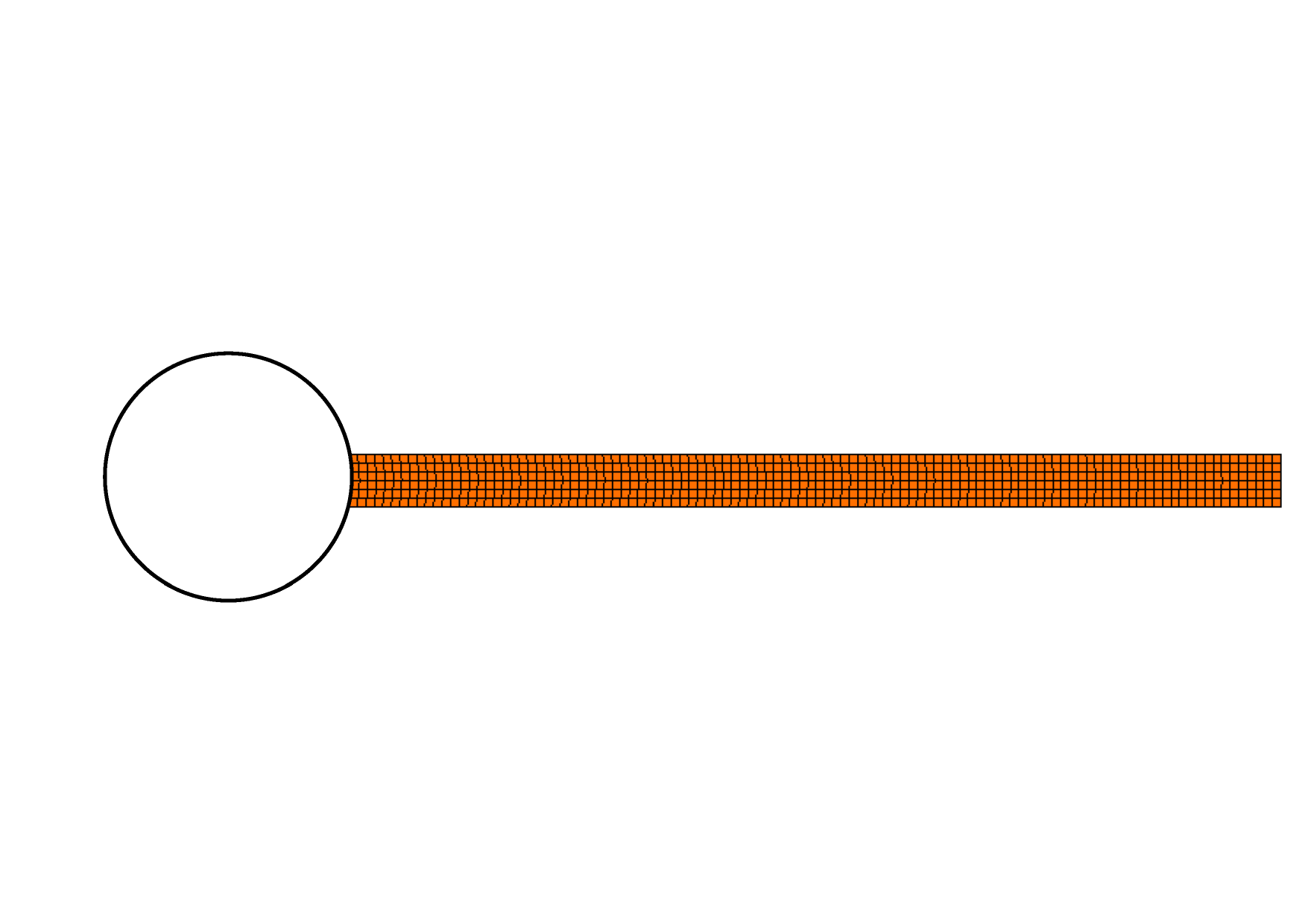}&
\includegraphics[width=.25\linewidth]{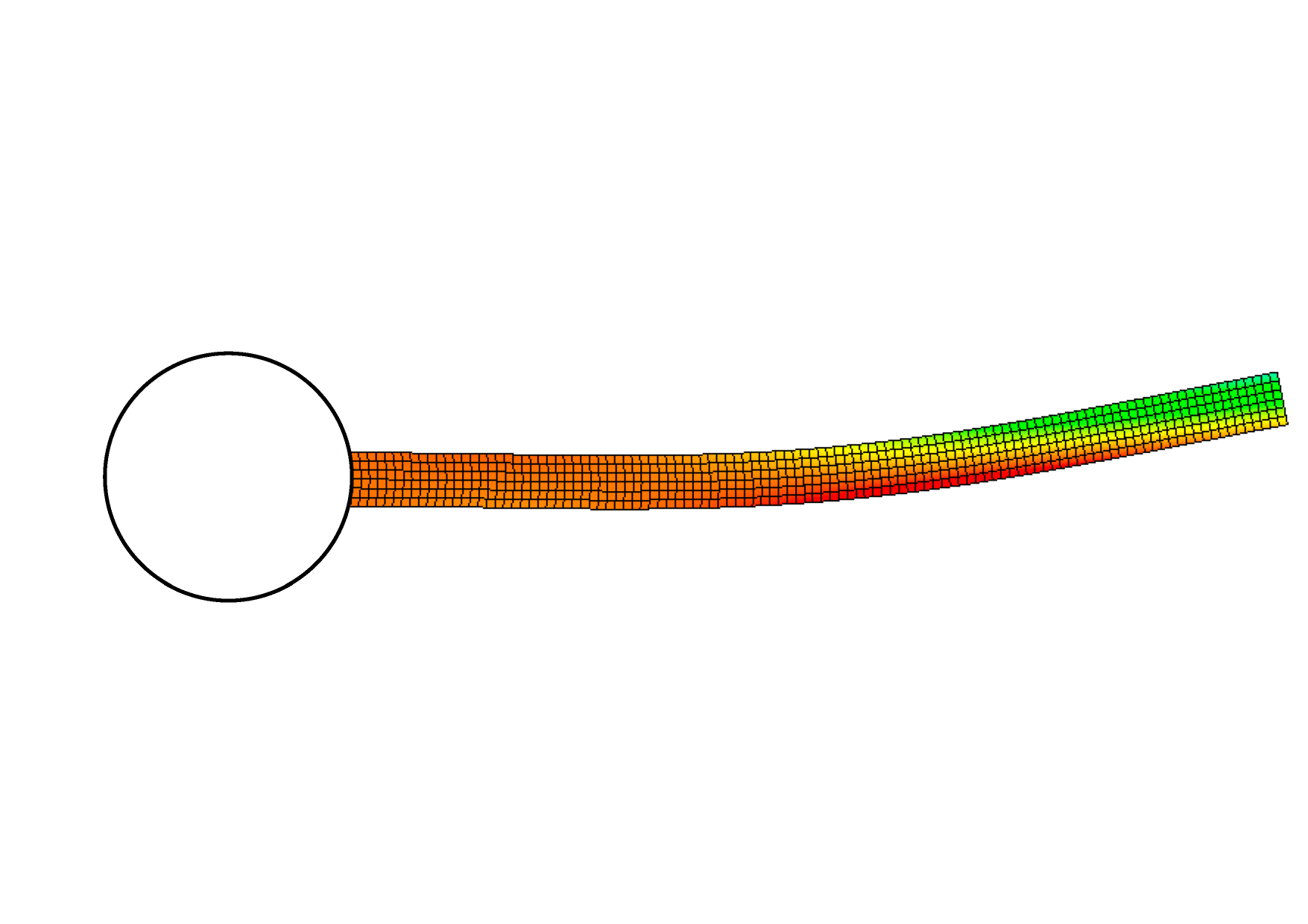}&
\includegraphics[width=.25\linewidth]{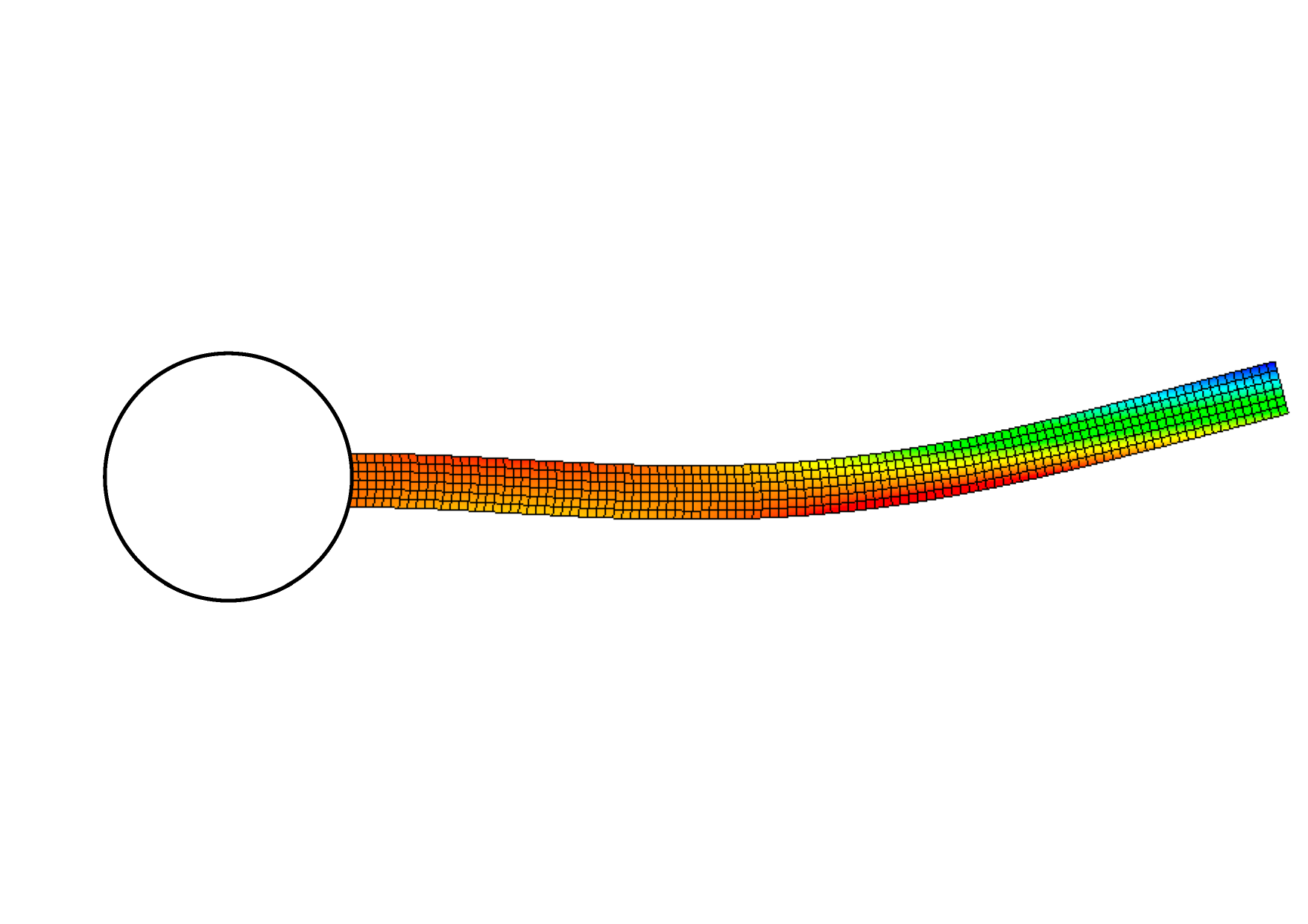}\\

&& $d^{\text{s}}_2$ &\\
&& \includegraphics[width=.3\linewidth]{Figures/color_bar.pdf}&\\
&&\hspace{0.01\linewidth}$-0.007\ \text{m}$ \hspace{.2\linewidth} $0.001\ \text{m}$ &
\end{tabular}
\caption{Deformations of the Turek-Hron benchmark's beam for both elemental and nodal coupling at different points in time.
  Time values are start of the simulation, $0.5T_\text{f}$, and $T_\text{f}$.
  In both cases the structures are discretized with $\Qone$ elements.
  The nodal case uses $\mfac = 1.0$ and the elemental case uses $\mfac = 2.0$.
  Both use the same Cartesian grid denoted by $N=32$.}
\label{fig:th_def}
\end{figure}

\begin{table}
\centering
\begin{tabular}{ c | c | c | c | c | c | c }
\hline
& $\mfac$ & Element Type & $d^{\text{s}}_1 \ (\text{m})$ & $d^{\text{s}}_2 \ (\text{m})$ & $\text{St}_1$
& $\text{St}_2$ \\
\hline
Elemental Coupling &0.5 & $\Pone$ & $-2.6479 \pm 2.5397$ & $1.6122 \pm 33.3406$
& 11.1523 & 5.5739\\
&& $\Qone$ & $-2.6297 \pm 2.5221 $& $1.5580 \pm 33.2858$ & 11.1487 & 5.5731\\
&& & & & & \\

 &1.0 & $\Pone$ &$-2.7138 \pm 2.5694$ & $1.6693 \pm 33.2625$ & 11.1433 & 5.5694\\
&& $\Qone$ &$-2.6922 \pm 2.5694$ & $1.6906  \pm 33.1611$ & 11.1104 & 5.5570\\
  && & & & & \\
  
 &2.0 & $\Pone$ &$-2.7512 \pm 2.5610$ & $1.4424 \pm 32.6893$ & 11.2140 & 5.6050\\
&& $\Qone$ &$-2.7417 \pm 2.5955$ & $1.4551 \pm 33.0260$ & 11.1983 & 5.5972\\
  && & & & & \\
  
 \hline
Nodal Coupling &0.5 & $\Pone$ & $-2.4947 \pm 2.3843$ & $1.5455 \pm 32.4282$ & 11.1428
& 5.5750\\
&& $\Qone$ &$-2.4754 \pm 2.3897$ & $1.5027 \pm 32.5401$ & 11.1335 & 5.5708\\
&& & & & & \\

&1.0 & $\Pone$ &$-2.6020 \pm 2.5115$ & $1.6490 \pm 32.9147$ & 11.1175 & 5.5566\\
&& $\Qone$ &$-2.6386 \pm 2.5036$ & $1.6589 \pm 32.8664$ & 11.0973 & 5.5500\\
  && & & & & \\
  
&2.0 & $\Pone$ &$-3.0115 \pm 2.9064$ & $1.6368 \pm 35.3819$ & 11.3609 & 5.6776\\
&& $\Qone$ &$-2.9812 \pm 2.8743$ & $1.6399 \pm 35.3095$ & 11.3575 & 5.6758\\
  && & & & & \\
\hline
\end{tabular}
\caption{
Mean, amplitude, and Strouhal number for the Turek-Hron benchmark for the $x$ and $y$-components of displacement for $N = 64$ and first-order elements.
}
\label{tb:th-mean-amp1}
\end{table}

\begin{table}
\centering
\begin{tabular}{ c | c | c | c | c | c | c }
\hline
& $\mfac$ & Element Type & $d^{\text{s}}_1 \ (\text{m})$ & $d^{\text{s}}_2 \ (\text{m})$ & $\text{St}_1$
& $\text{St}_2$ \\
\hline
Elemental Coupling &0.5 & $\Ptwo$ &$-2.6590 \pm 2.5284$ & $1.5582 \pm 33.3757$ & 11.1457
& 5.5705\\
&& $\Qtwo$ &$-2.6115 \pm 2.5199$ & $1.5382 \pm 33.2676$ & 11.1423 & 5.5692\\
&& & & & & \\

&1.0& $\Ptwo$ &$-2.8010 \pm 2.6465$ & $1.7158 \pm 33.9103$ & 11.0641 & 5.5338\\
&& $\Qtwo$ &$-2.7439 \pm 2.5986$ & $1.6472 \pm 33.5674$ & 11.0820 & 5.5390\\
  && & & & & \\
  
&2.0& $\Ptwo$ &$-2.7693 \pm 2.6691$ & $1.4170 \pm 34.2502$ & 11.0678 & 5.5346\\
&& $\Qtwo$ &$-2.7550 \pm 2.5825$ & $1.4621 \pm 33.7282$ & 11.0806 & 5.5415\\
  && & & & & \\
  
 \hline
 Nodal Coupling &0.5 & $\Ptwo$ &$-2.5218 \pm 2.4237$ & $1.4997 \pm 32.7361$ & 11.1141
 & 5.5594\\
&& $\Qtwo$ &$-2.4639 \pm 2.3917$ & $1.4751 \pm 32.6360$ & 11.1185 & 5.5633\\
&& & & & & \\

&1.0 & $\Ptwo$ & $-2.6676 \pm  2.5625$ & $1.7285 \pm 33.3968$ & 11.0485 & 5.5261\\
 && $\Qtwo$ &$-2.6027 \pm 2.5220$ & $1.6815 \pm 33.1388$ & 11.0695 & 5.5324\\
  && & & & & \\
  
&2.0 & $\Ptwo$ & $-3.1716 \pm 3.0097$ & $1.6551 \pm 36.7702$ & 11.1673 & 5.5814\\
 && $\Qtwo$ &$-3.0919 \pm 2.9460$ & $1.6406 \pm 36.3013$ & 11.2296 & 5.6124\\
  && & & & & \\
\hline
\end{tabular}
\caption{
Mean, amplitude, and Strouhal number for the Turek-Hron benchmark for the $x$ and $y$-components of displacement for $N = 64$ and second-order elements.
}
\label{tb:th-mean-amp2}
\end{table}

\subsubsection{FSI Model of Bioprosthetic Heart Valve Dynamics}
\label{BHV}
We can use our findings in this study to simulate bioprosthetic heart valve (BHV) dynamics in a pulse-duplicator as described by Lee \etal.~\cite{Lee2020, LeeJTCVS}.
This model consists of a three-dimensional aortic test section of an experimental pulse-duplicator; see Figure~\ref{fig:BHV_comparison}(a).
The test section is coupled to three-element Windkessel models that provide realistic downstream loading conditions and the upstream driving conditions for the aortic test section.
A combination of normal traction and zero tangential velocity boundary conditions are used at the inlet and outlet to couple the reduced-order models to the detailed description of the flow within the test section.

The computational domain is $5.05$ cm $\times$ $10.1$ cm $\times$ $5.05$ cm.
The simulations use a three-level locally refined grid with a refinement ratio of two between levels and an $N/2 \times N \times N/2$ coarse grid with $N = 64$, which corresponds to $N = 256$ at the finest level.
The time step size is $\Delta t = 5.0 \times 10^{-6} \ \text{s}$ and is systematically reduced as needed to maintain stability throughout the simulation.

Solid wall boundary conditions are imposed on the remaining boundaries of the computational domain.
This benchmark's parameters are listed in Table~\ref{tb:bhv}
Structural models use $\Ptwo$ tetrahedral elements for the BHV leaflets and $\Pone$ tetrahedral elements for the aortic test section, which are generated using Trelis (Computational Simulation Software, LLC, American Fork, UT, USA).
We use a piecewise-linear kernel for the aortic test section and a three-point B-spline kernel for the valve leaflets as regularized delta functions.

\begin{figure}
\centering
\includegraphics[width=.7\linewidth]{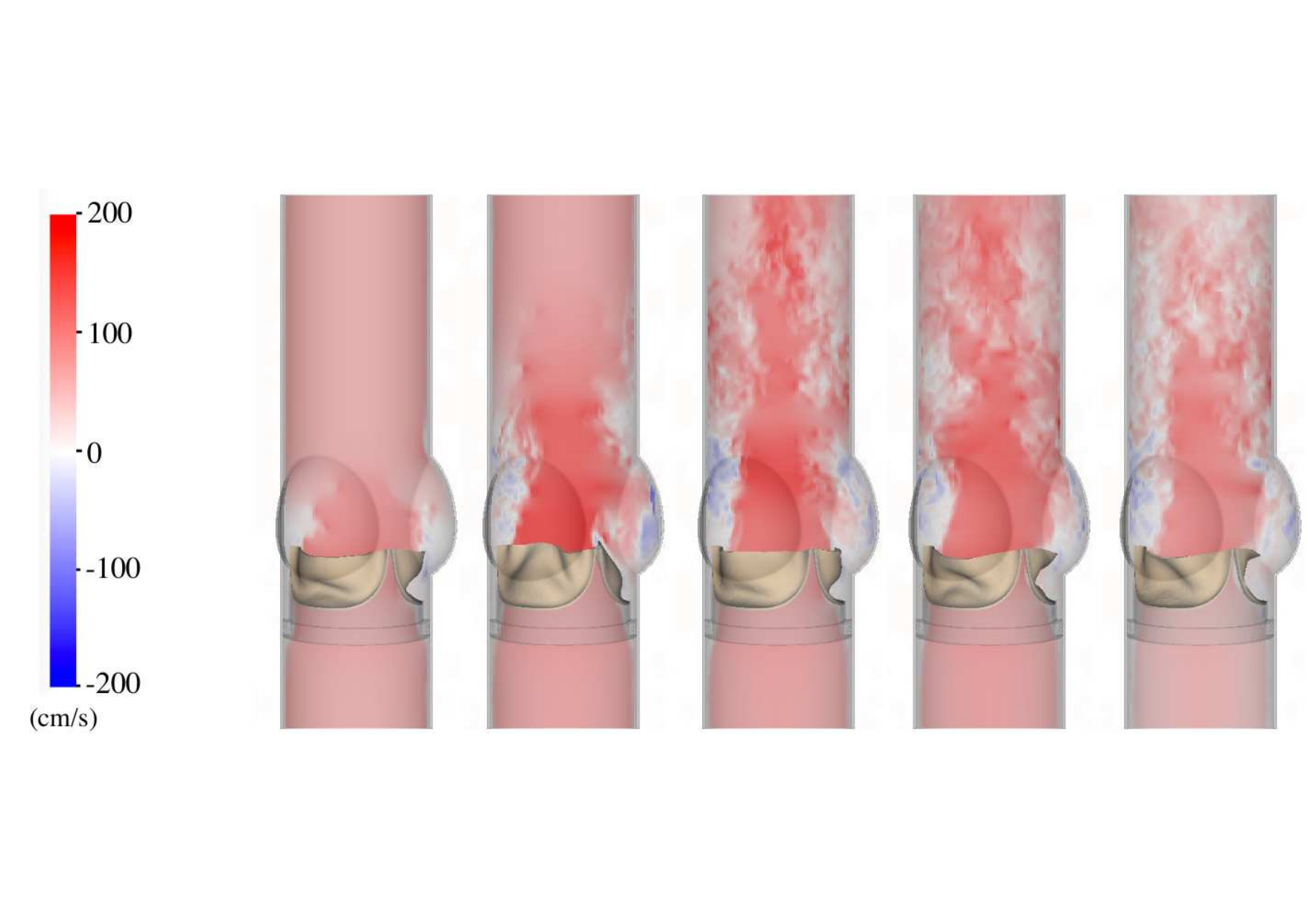}\\
(a)\\
\begin{tabular}{c c c}
\includegraphics[width=.3\linewidth]{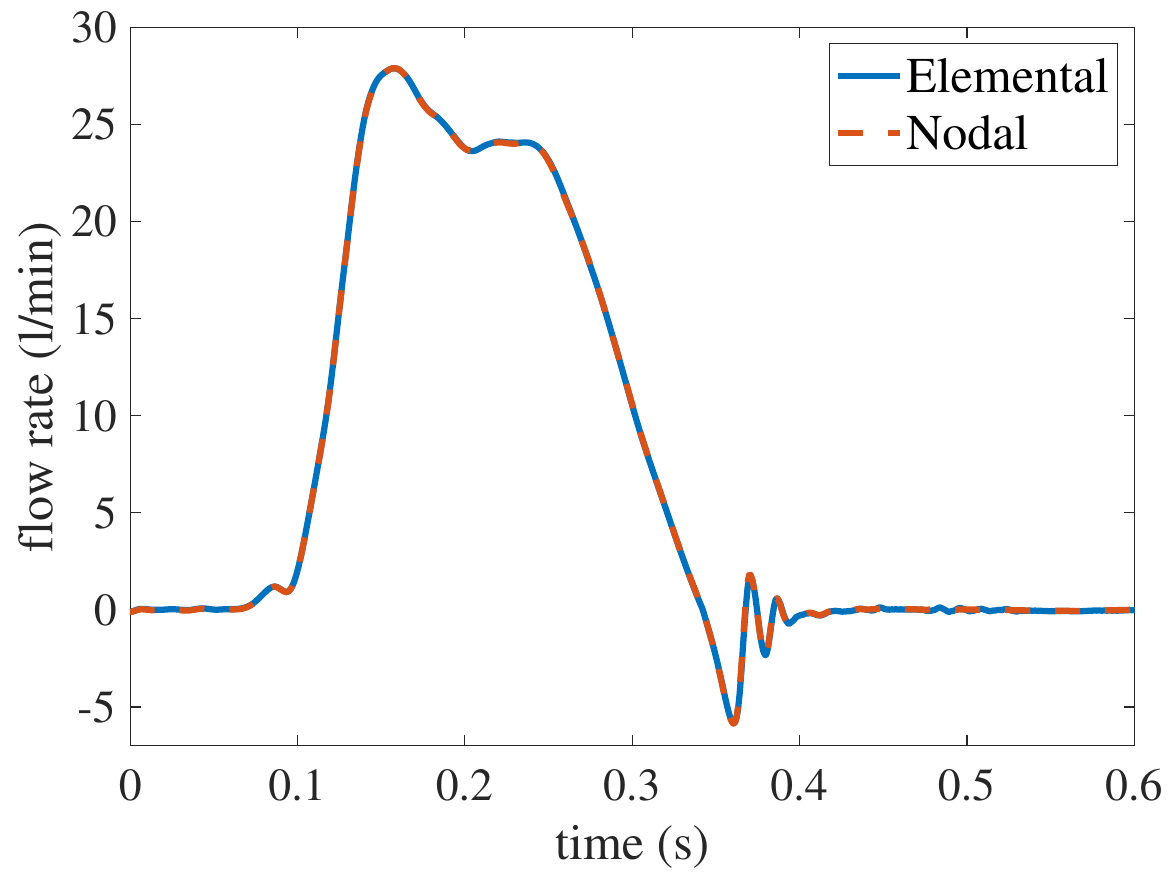} &
\includegraphics[width=.3\linewidth]{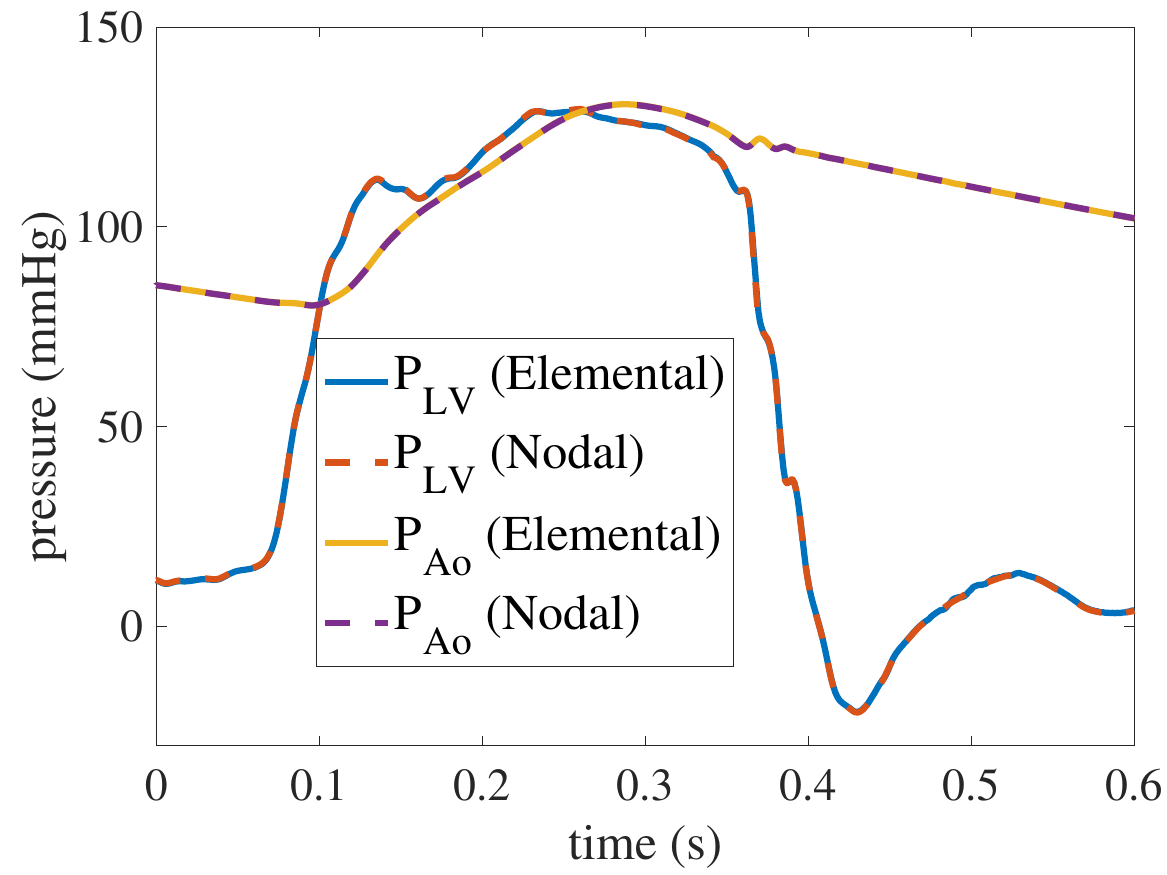} &
\includegraphics[width=.3\linewidth]{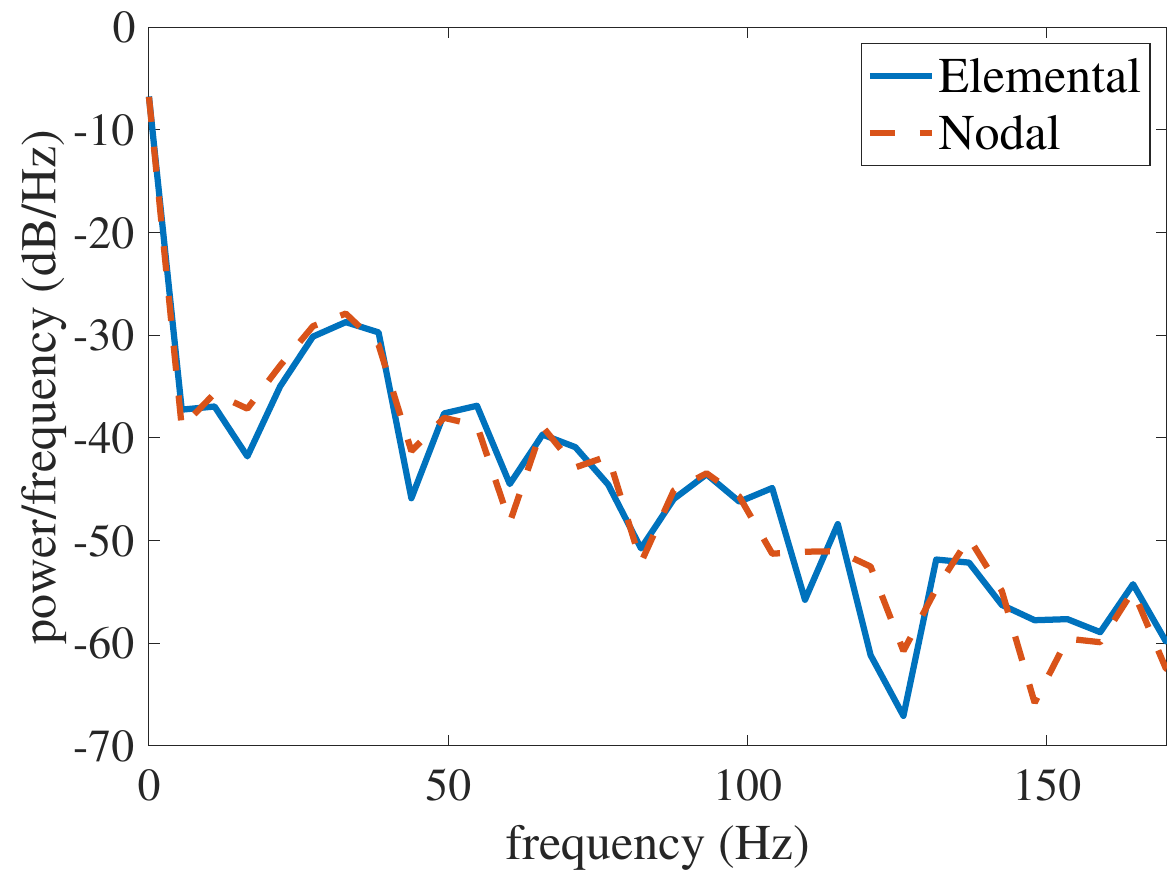} \\
(b) & (c) & (d) \\
\end{tabular}
\caption{
Model of a bovine pericardial bioprosthetic heart valve in an experimental pulse-duplicator~\cite{Lee2020, LeeJTCVS}.
(a) Axial flow in the aortic test section using nodal coupling.
Comparisons between results obtained for nodal and elemental coupling: (b) flow rates; (c) pressure waveforms; and (d) bioprosthetic heart valve (BHV) leaflet fluttering frequency.
The flow rate and pressure waveforms are in excellent agreement.
The BHV leaflet fluttering frequency quantifies the dominant frequency mode in spectral analysis performed on the leaflet tip displacement~\cite{LeeJTCVS}.
The BHV fluttering frequencies are 32.88~Hz for both methods, which suggests that the leaflet kinematics are in excellent agreement.
}
\label{fig:BHV_comparison}
\end{figure}

Figures~\ref{fig:BHV_comparison}b and~\ref{fig:BHV_comparison}c show that the BHV model using nodal coupling produces excellent agreements in bulk flow rates and pressure waveforms with elemental coupling.
To verify that we can also reproduce the consistent leaflet kinematics, we assess fluttering frequencies from leaflet tip position time series data.
We use the MATLAB Signal Processing Toolbox (MathWorks, Inc, Natick, MA, USA) to determine the power spectral density, and the second highest peak is used to determine the dominant frequency that characterizes leaflet fluttering~\cite{LeeJTCVS}.

\begin{figure}
\centering
\includegraphics[width=.7\linewidth]{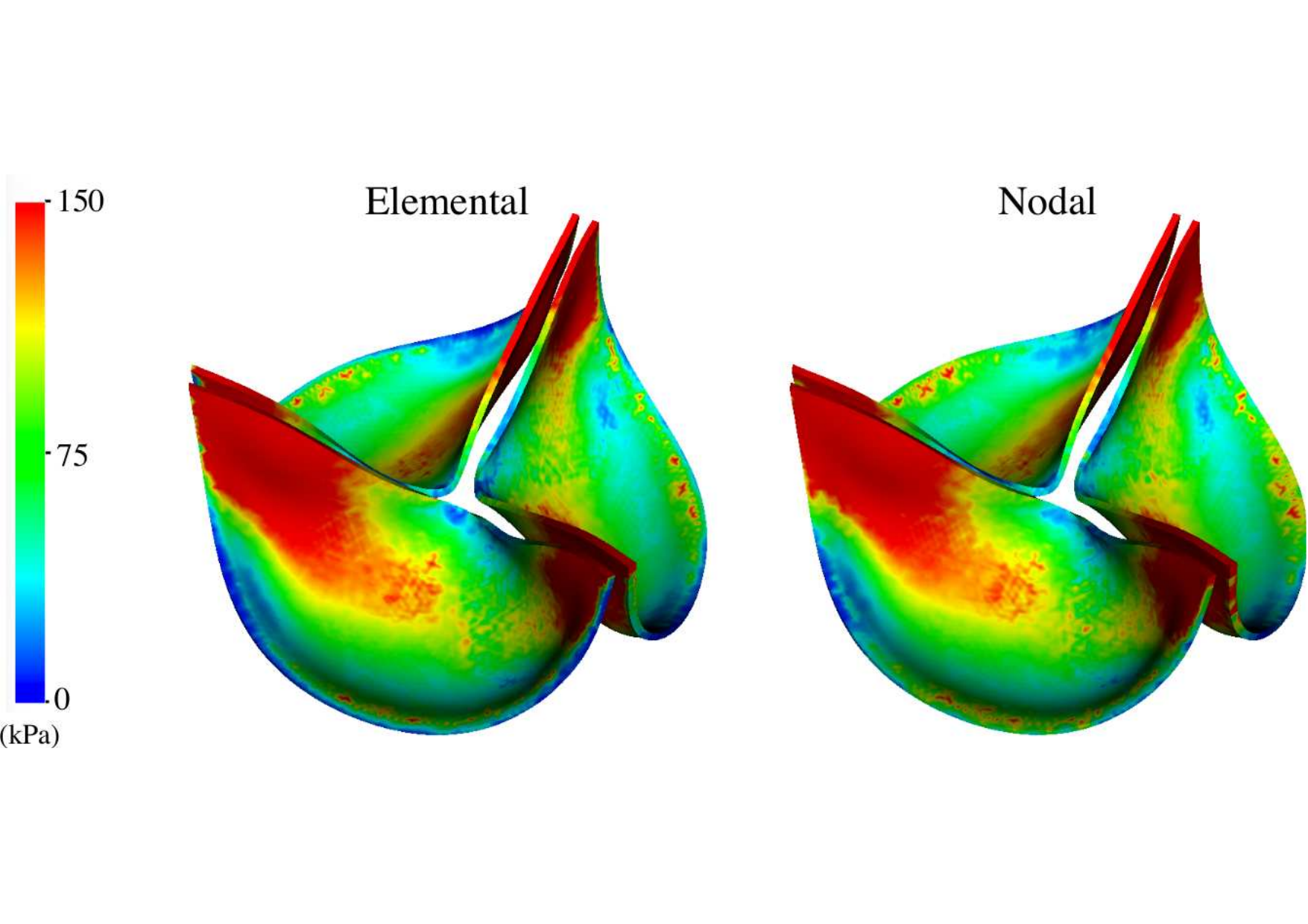}
\caption{Comparison of von Mises stress in $kPa$ of the closed and pressured BHV model obtained for elemental and nodal coupling.
  The stress distributions are remarkably consistent.}
\label{fig:von_mises}
\end{figure}

\begin{table}
\centering
\begin{tabular}{| c | c | c | c |}
\hline
Density & $\rho$ & $1.0$ & g/cm$^3$\\
\hline
Viscosity & $\mu$ & $1.0$ & cP\\
\hline
Material model & - & Modified Holzapfel-Gasser-Ogden & - \\
\hline
\end{tabular}
\caption{Parameters for the BHV benchmark (Section~\ref{BHV}).}
\label{tb:bhv}
\end{table}

\begin{table}
\centering
%
%
\begin{tabular}{| l | r |}
\hline
total elements & 597776 \\
\hline
interaction points, nodal coupling & 238917 \\
\hline
interaction points, elemental coupling & 3329376 \\
\hline
\end{tabular}
\caption{
  Number of elements and interaction points for the BHV benchmark (Section~\ref{BHV}) at the first timestep.
  For elemental coupling, the average number of quadrature points per element here is five, which corresponds to most elements using the standard four-point tetrahedral Gauss quadrature rule and the remainder using rules with more points.
  Nodal coupling uses approximately an order of magnitude fewer interaction points because each node is shared by multiple elements.
}
\label{tb:bhv-ib-points}
\end{table}

We also observe the effect of using nodal coupling on a BHV leaflet stress distribution in diastole of a cardiac cycle, which is quantified by the von Mises stress~\cite{LeeJTCVS}.
Figure~\ref{fig:von_mises} compares von Mises stress and shows that using nodal coupling recapitulates the stress distribution obtained by elemental coupling.

\section{Discussion and Conclusion}
This work analyzes the differences between nodally and elementally coupled IFED methods and their relation to mass lumping.
We present a general framework from which both the new nodal coupling scheme detailed in this study and the elementally coupled method of Griffith and Luo~\cite{Griffith2017} may be derived.
These two coupling schemes are summarized in Algorithms~\ref{alg:nodal-algorithm} and \ref{alg:elemental-algorithm}.
This framework complements the available set of nodally coupled IB methods, such as the original IB method of Peskin~\cite{Peskin1972, Peskin2002} and other FE-based nodal IB methods~\cite{Zhang2004, Wang2004}.
Specifically, the type of method may be arrived at by the choice of quadrature rule for the mass matrix in the force projection~\eqref{eq:consistent-div-pk1-projection}--\eqref{eq:inconsistent-div-pk1-projection} and the choice of interaction points for force spreading~\eqref{eq:semidiscrete-spread-1}--\eqref{eq:semidiscrete-spread-3} and velocity projection~\eqref{eq:semidiscrete-interp-1}--\eqref{eq:semidiscrete-interp-3}.
We can derive the elemental coupling method~\cite{Griffith2017} by using a high-order Gauss quadrature rule for the mass matrix and adaptively-chosen Gauss quadrature points for the interaction points in the fluid-solid coupling operators.
Remarkably, the nodally coupled scheme described herein is obtained for \emph{any} nodal quadrature rule with arbitrary non-zero weights that is used for both integrating the mass matrix and the fluid-solid coupling operators (Theorem~\ref{thm:fully-nodal-ignore-weights}).
It is precisely because of this arbitrariness of weights that we may use elements that must typically be avoided in finite element computations with mass lumping, such as standard $\Ptwo$ elements.
In our nodal coupling scheme, the entries of the lumped mass \emph{exactly} cancel out with coupling weights, and thus the coupling scheme depends only on the positions of the nodes and not on the nodal quadrature weights.
This cancellation enables the use of even higher-order elements with mass lumping.
Hence, we can use \emph{any diagonal matrix} as the lumped mass matrix with this method, \emph{regardless of whether or not the original finite element space was suitable for mass lumping}.

Following the work of Vadala-Roth~\etal~\cite{Vadala-Roth2020}, we adapt several standard solid mechanics benchmarks to the IB framework to test our method's capabilities for incompressible finite deformations and to explore different element types and choice of mesh factor ratio $\mfac$.
Section~\ref{subsec:mfac} provides some guidance on the nodally coupled method's mesh spacing requirements.
We also examine an implementation of the Turek-Hron benchmark as well as some three-dimensional problems. The large-scale dynamic FSI model of a bioprosthetic heart valve in a pulse-duplicator system developed by Lee et al~\cite{LeeJTCVS} was shown to yield remarkably consistent results for both nodal and elemental coupling strategies.
Notably, this model was previously demonstrated to yield leaflet dynamics that are in excellent agreement with experimental data from the pulse-duplicator system~\cite{Lee2020,LeeJTCVS}.

In all benchmarks considered herein, elemental and nodal coupling are in excellent agreement when $\mfac \leq 1$, but nodal coupling can sometimes exhibit poor accuracy with large values of $\mfac$.
This is not surprising since, with relatively large element sizes, the interaction points will be far enough apart that there will be gaps in the Cartesian grid representation of the force density and, therefore, the potential for catastrophic leaks through the structure.
Subsection~\ref{subsec:mfac} examines two cases in which nodal coupling performs significantly worse than elemental coupling with very large values of $\mfac$.
As was shown in the work of Griffith and Luo~\cite{Griffith2017}, adaptive quadrature may be used to circumvent the oft-repeated rule of thumb for IB methods that the structural discretization must be twice as fine as the Cartesian grid.
Our results suggest that even with nodal coupling, we are able to use structural meshes that are about twice as \emph{coarse} as the Cartesian grid for a shear-dominant case, which suggests that the commonly used rule on the spacing of interaction points may need further investigation.
Lee and Griffith~\cite{Lee2021} have shown that for pressure-loaded cases, the structural node spacing must be at least as fine as the Cartesian grid and our results suggest that the same holds for nodal coupling.
However, this is still worth noting because our result suggests that we may be able to use nodal spacings as large as $\mfac = 1$, which is again an improvement from the rule of thumb often cited for IB methods that structural nodes should be approximately half a meshwidth apart from each other.
Further, numerical benchmarks suggest that the choice of the structural discretization is problem-dependent, which is consistent with prior studies of this methodology.

The nodally coupled IFED method avoids solving linear systems involving nontrivial mass matrices and using elemental quadrature rules to define dense sets of interaction points.
These two changes make the nodal coupling algorithm substantially more computationally efficient than the elementally coupled one when using the same number of degrees of freedom in each case.
Overall, for situations that require $\mfac \leq 1$ for accuracy reasons, nodal IFED methods appear to provide comparable accuracy to elementally coupled IFED methods.
Further, the ability of the nodally coupled IFED method to use higher-order elements with simple mass lumping strategies.
These features accelerate the IFED method and enable its use in complex modeling applications.

\section*{Acknowledgments}
B.E.G. acknowledges funding from the NIH (Awards R01HL157631 and U01HL143336) and NSF (Awards OAC 1450327, OAC 1652541, OAC 1931516, and CBET 1757193).
Simulations were performed using computational facilities provided by the University of North Carolina at Chapel Hill through the Research Computing Division of UNC Information Technology Services.

\bibliography{mass_lumping_paper.bib}

\begin{thebibliography}{10}
\expandafter\ifx\csname url\endcsname\relax
  \def\url#1{\texttt{#1}}\fi
\expandafter\ifx\csname urlprefix\endcsname\relax\def\urlprefix{URL }\fi
\expandafter\ifx\csname href\endcsname\relax
  \def\href#1#2{#2} \def\path#1{#1}\fi

\bibitem{Peskin1972}
C.~S. Peskin, Flow patterns around heart valves: A numerical method, J Comput
  Phys 10 (1972) 252--271.

\bibitem{Peskin1977}
C.~S. Peskin, {Numerical analysis of blood flow in the heart}, J Comput Phys 25
  (1977) 220--252.

\bibitem{Peskin2002}
C.~S. Peskin, {The immersed boundary method}, Acta Numer (2002) 479--517.

\bibitem{Griffith2020}
B.~E. Griffith, N.~A. Patankar, Immersed methods for fluid-structure
  interaction, Annu Rev of Fluid Mech 52~(1) (2020) 421 -- 448.

\bibitem{Boffi2008}
D.~Boffi, L.~Gastaldi, L.~Heltai, C.~S. Peskin, {On the hyper-elastic
  formulation of the immersed boundary method}, Comput Methods Appl Mech Eng
  197 (2008) 2210--2231.
\newblock \href {http://arxiv.org/abs/arXiv:1501.0228}
  {\path{arXiv:arXiv:1501.0228}}.

\bibitem{Lee2020}
J.~H. Lee, A.~D. Rygg, E.~M. Kolahdouz, S.~Rossi, S.~M. Retta, N.~Duraiswamy,
  L.~N. Scotten, B.~A. Craven, B.~E. Griffith, {Fluid--structure interaction
  models of bioprosthetic heart valve dynamics in an experimental pulse
  duplicator}, Ann Biomed Eng 48~(5) (2020) 1475--1490.

\bibitem{Zhang2004}
L.~Zhang, A.~Gerstenberger, X.~Wang, W.~Liu, {Immersed finite element method},
  Comput Methods Appl Mech Eng 193 (2004) 2051--2067.

\bibitem{Devendran2012}
D.~Devendran, C.~S. Peskin, {An immersed boundary energy-based method for
  incompressible viscoelasticity}, J Comput Phys 231 (2012) 4613--4642.

\bibitem{Griffith2017}
B.~E. Griffith, X.~Luo, {Hybrid finite difference/finite element immersed
  boundary method}, Int J Numer Methods Biomed Eng 00 (2017) 1--32.
\newblock \href {http://arxiv.org/abs/arXiv:1612.05916v2}
  {\path{arXiv:arXiv:1612.05916v2}}.

\bibitem{Fried1975}
I.~Fried, D.~Malkus, Finite element mass matrix lumping by numerical
  integration with no convergence rate loss, Int Journal Solids Structures 11
  (1975) 461--465.

\bibitem{Geevers_2018}
S.~Geevers, W.~A. Mulder, J.~J.~W. van~der Vegt, New higher-order mass-lumped
  tetrahedral elements for wave propagation modelling, SIAM Journal on
  Scientific Computing 40~(5) (2018) A2830--A2857.

\bibitem{cohen2001higher}
G.~Cohen, P.~Joly, J.~E. Roberts, N.~Tordjman, Higher order triangular finite
  elements with mass lumping for the wave equation, SIAM Journal on Numerical
  Analysis 38~(6) (2001) 2047--2078.

\bibitem{guermond2013correction}
J.-L. Guermond, R.~Pasquetti, A correction technique for the dispersive effects
  of mass lumping for transport problems, Computer Methods in Applied Mechanics
  and Engineering 253 (2013) 186--198.

\bibitem{hansbo1994aspects}
P.~Hansbo, Aspects of conservation in finite element flow computations,
  Computer methods in applied mechanics and engineering 117~(3-4) (1994)
  423--437.

\bibitem{Hinton1976}
E.~Hinton, T.~Rock, O.~Zienkiewicz, A note on mass lumping and related
  processes in the finite element method, Earthq Eng Struct D 4~(3) (1976)
  245--249.

\bibitem{ThomasJRHughes2000}
{T. J. R. Hughes}, {The Finite Element Method: Linear Static and Dynamic Finite
  Element Analysis}, Dover Publications, Mineola, NY, 2000.

\bibitem{RDCook1974}
{R. D. Cook}, {Improved two-dimensional finite element}, J. Struct. Div.
  100~(ST9) (1974).

\bibitem{Reese1999}
S.~Reese, M.~Ussner, B.~Reddy, {A new stabilization technique for finite
  elements in non-linear elasticity}, Int J Numer Meth Engng 44 (1999)
  1617--1652.

\bibitem{Bonet2015}
J.~Bonet, A.~Gil, R.~Ortigosa, {A computational framework for polyconvex large
  strain elasticity}, Comput Methods Appl Mech Eng 283 (2015) 1061--1094.
\newblock \href {http://arxiv.org/abs/1010.1724} {\path{arXiv:1010.1724}}.

\bibitem{Turek2007}
S.~Turek, J.~Hron, {Proposal for numerical benchmarking of fluid--structure
  interaction between an elastic object and laminar incompressible flow}, in:
  H.~J. Bungartz, M.~Sch{\"a}fer (Eds.), {Fluid--Structure Interaction},
  Vol.~53 of {Lecture Notes in Computational Science and Engineering},
  Springer, Berlin, Heidelberg, 2006, pp. 371--385.

\bibitem{Lee2021}
J.~H. Lee, B.~E. Griffith, {On the Lagrangian-Eulerian coupling in the immersed
  finite element/difference method}, J Comput Phys 457 (2022) 111042.

\bibitem{Hessenthaler2017a}
A.~Hessenthaler, N.~Gaddum, O.~Holub, R.~Sinkus, O.~Rohrle, D.~Nordsletten,
  Experiment for validation of fluid-structure interaction models and
  algorithms, Int J Numer Methods Biomed Eng 33 (2017).

\bibitem{Hessenthaler2017b}
A.~Hessenthaler, O.~Rohrle, D.~Nordsletten, {Validation of a non-conforming
  monolithic fluid-structure interaction method using phase-contrast MRI}, Int
  J Numer Methods Biomed Eng 33 (2017).

\bibitem{LeeJTCVS}
J.~H. Lee, L.~N. Scotten, R.~Hunt, T.~G. Caranasos, J.~P. Vavalle, B.~E.
  Griffith, {Bioprosthetic aortic valve diameter and thickness are directly
  related to leaflet fluttering: Results from a combined experimental and
  computational modeling study}, JTCVS Open 6 (2021) 60--81.

\bibitem{harlow1965numerical}
F.~H. Harlow, J.~E. Welch, Numerical calculation of time-dependent viscous
  incompressible flow of fluid with free surface, Physics of Fluids 8~(12)
  (1965) 2182--2189.

\bibitem{Wang2012}
X.~Wang, C.~Wang, L.~T. Zhang, {Semi-implicit formulation of the immersed
  finite element method}, Comput Methods Appl Mech Engrg 49~(4) (2012)
  421--430.

\bibitem{Zhang2007}
L.~T. Zhang, M.~Gay, {Immersed finite element method for fluid-structure
  interactions}, J Fluids Struct 23~(6) (2007) 839--857.

\bibitem{Chiumenti}
M.~Chiumenti, Q.~Valverde, C.~{Agelet De Saracibar}, M.~Cervera, {A stabilized
  formulation for incompressible elasticity using linear displacement and
  pressure interpolations}, Comput Methods Appl Mech Eng 191 (2002) 5253--5264.

\bibitem{Masud2013}
A.~Masud, T.~Truster, {A framework for residual-based stabilization of
  incompressible finite elasticity: Stabilized formulations and F methods for
  linear triangles and tetrahedra}, Comput Methods Appl Mech Eng 267 (2013)
  359--399.

\bibitem{Beatit}
\href{https://github.com/rossisimone/beatit}{{BeatIt - a C++ code for heart
  biomechanics and more}}.
\newline\urlprefix\url{https://github.com/rossisimone/beatit}

\bibitem{IBAMR}
\href{https://github.com/IBAMR/IBAMR}{{IBAMR: An adaptive and
  distributed-memory parallel implementation of the immersed boundary method}}.
\newline\urlprefix\url{https://github.com/IBAMR/IBAMR}

\bibitem{Griffith2007}
B.~E. Griffith, R.~D. Hornung, D.~M. McQueen, C.~S. Peskin, {An adaptive,
  formally second order accurate version of the immersed boundary method}, J
  Comput Phys 223 (2007) 10--49.

\bibitem{libMeshPaper}
B.~S. Kirk, J.~W. Peterson, R.~H. Stogner, G.~F. Carey, {\texttt{libMesh}: A
  C++ Library for Parallel Adaptive Mesh Refinement/Coarsening Simulations},
  Engineering with Computers 22~(3--4) (2006) 237--254.

\bibitem{petsc-web-page}
S.~Balay, S.~Abhyankar, M.~F. Adams, S.~Benson, J.~Brown, P.~Brune,
  K.~Buschelman, E.~M. Constantinescu, L.~Dalcin, A.~Dener, V.~Eijkhout, W.~D.
  Gropp, V.~Hapla, T.~Isaac, P.~Jolivet, D.~Karpeev, D.~Kaushik, M.~G. Knepley,
  F.~Kong, S.~Kruger, D.~A. May, L.~C. McInnes, R.~T. Mills, L.~Mitchell,
  T.~Munson, J.~E. Roman, K.~Rupp, P.~Sanan, J.~Sarich, B.~F. Smith,
  S.~Zampini, H.~Zhang, H.~Zhang, J.~Zhang, {PETS}c {W}eb page,
  https://petsc.org/ (2021).

\bibitem{Griffith2009}
B.~E. Griffith, {An accurate and efficient method for the incompressible
  Navier-Stokes equations using the projection method as a preconditioner}, J
  Comput Phys 228 (2009) 7565--7595.

\bibitem{Vadala-Roth2020}
B.~Vadala-Roth, S.~Acharya, N.~Patankar, S.~Rossi, B.~Griffith, Stabilization
  approaches for the hyperelastic immersed boundary method for problems of
  large-deformation incompressible elasticity, Comput Methods Appl Mech Eng 365
  (2020).

\bibitem{Vadala-RothThesis}
B.~L. Vadala-Roth, {Stabilization of the Hybrid Immersed Boundary Method},
  Ph.D. thesis, University of North Carolina at Chapel Hill (2020).

\bibitem{Bodony2006}
D.~Bodony, Analysis of sponge zones for computational fluid mechanics, J Comput
  Phys 212 (2006) 691--702.

\bibitem{Bazilevs2009}
Y.~Bazilevs, J.~Gohean, T.~Hughes, R.~Moser, Y.~Zhang, Patient-specific
  isogeometric fluid-structure interaction analysis of thoracic aortic blood
  flow due to implantation of the jarvik 2000 left ventricular assist device,
  Comput Methods Appl Mech Eng 198 (2009).

\bibitem{Roy2015}
S.~Roy, L.~Heltai, F.~Costanzo, {Benchmarking the immersed finite element
  method for fluid-structure interaction problems}, Comput Math with Appl 69
  (2015) 1167--1188.
\newblock \href {http://arxiv.org/abs/1306.0936} {\path{arXiv:1306.0936}}.

\bibitem{LeeThesis}
J.~H. Lee, {Simulating In Vitro Models of Cardiovascular Fluid-Structure
  Interaction: Methods, Models, and Applications}, Ph.D. thesis, University of
  North Carolina at Chapel Hill (2020).

\bibitem{Wang2004}
X.~Wang, W.~K. Liu, Extended immersed boundary method using {FEM} and {RKPM},
  Comput Methods Appl Mech Eng 193 (2004) 1305--1321.

\end{thebibliography}
\end{document}